\theoremstyle{plain}
\newtheorem{theorem}{Theorem}[section] 
\newaliascnt{proposition}{theorem}
\newtheorem{proposition}[proposition]{Proposition}
\newaliascnt{lemma}{theorem}
\newtheorem{lemma}[lemma]{Lemma}
\newaliascnt{corollary}{theorem}
\newtheorem{corollary}[corollary]{Corollary}
\newaliascnt{conjecture}{theorem}
\newtheorem{conjecture}[conjecture]{Conjecture}
\newaliascnt{prop}{theorem}
\newtheorem{prop}[prop]{Proposition}
\newtheorem{thm}[theorem]{Theorem}
\newaliascnt{lem}{theorem}
\newtheorem{lem}[lem]{Lemma}
\newaliascnt{cor}{theorem}
\newtheorem{cor}[cor]{Corollary}
\newaliascnt{conj}{theorem}
\newaliascnt{remark}{theorem}
\newaliascnt{note}{theorem}
\theoremstyle{definition}
\newaliascnt{statement}{theorem}
\newtheorem{statement}[statement]{Statement}
\newaliascnt{definition}{theorem}
\newtheorem{definition}[definition]{Definition}
\newaliascnt{define}{theorem}
\newtheorem{define}[define]{Definition}
\newaliascnt{defn}{theorem}
\newaliascnt{question}{theorem}
\newtheorem{question}[question]{Question}
\crefname{proposition}{proposition}{propositions}
\crefname{lemma}{lemma}{lemmas}
\crefname{corollary}{corollary}{corollaries}
\crefname{conjecture}{conjecture}{conjectures}
\crefname{prop}{proposition}{propositions}
\crefname{lem}{lemma}{lemmas}
\crefname{cor}{corollary}{corollaries}
\crefname{conj}{conjecture}{conjectures}
\crefname{remark}{remark}{remarks}
\crefname{note}{note}{notes}
\crefname{statement}{statement}{statements}
\crefname{definition}{definition}{definitions}
\crefname{define}{definition}{definitions}
\crefname{defn}{definition}{definitions}
\crefname{question}{question}{questions}
\renewcommand{\hat}{\widehat}
\def\halts{\!\downarrow}
\def\la{\langle}
\def\ra{\rangle}
\def\ss{\scriptsize}
\def\M{\mathcal{M}}
\def\O{\mathcal{O}}
\newcommand{\st}{:}
\newcommand{\inM}[1]{#1$^{\M}$}
\newcommand{\inside}[1]{{#1}^\circ}
\newcommand{\upto}{\!\upharpoonright}  
\DeclareMathOperator{\dom}{\mathrm{dom}}
\newcommand{\N}{\mathbb{N}}
\newcommand{\HYP}{\mathrm{HYP}}
\DeclareMathOperator{\ACA}{\mathsf{ACA}_0}
\DeclareMathOperator{\ATR}{\mathsf{ATR}_0}
\DeclareMathOperator{\TLPP}{\mathsf{TLPP}_0}
\DeclareMathOperator{\PCA}{\Pi^1_1\mbox{-}\mathsf{CA}_0}
\DeclareMathOperator{\PCAp}{\Pi^1_1\mbox{-}\mathsf{CA}_0^+}
\DeclareMathOperator{\PTI}{\Pi^1_1\mbox{-}\mathsf{TI}_0}
\def\PtCA{\Pi^1_2\mbox{-}\mathsf{CA}_0}
\def\PTR{\Pi^1_1\mbox{-}\mathsf{TR}_0}
\DeclareMathOperator{\RCA}{\mathsf{RCA}_0}
\DeclareMathOperator{\SAC}{\Sigma^1_1\mbox{-}\mathsf{AC}_0}
\DeclareMathOperator{\SDC}{\Sigma^1_1\mbox{-}\mathsf{DC}_0}
\DeclareMathOperator{\WKL}{\mathsf{WKL}_0}
\DeclareMathOperator{\CKDT}{\mathsf{CKDT}}
\def\ST{\mathsf{PM}}
\def\MIM{\mathsf{MIM}}
\def\Maximality{\mathsf{MM}}
\newcommand{\FST}{\text{Finite }\ST}
\newcommand{\cFB}{\text{Bounded }\ST}
\newcommand{\FB}{\text{Locally Finite }\ST}
\newcommand{\FPST}{\text{Finite Path }\ST}
\newcommand{\CPM}{\text{Collection of }\mathsf{PM}}
\begin{document}

\title{The computational strength of matchings\\ in countable graphs} 
\date{November 12, 2019}
\author{Stephen Flood, Matthew Jura, Oscar Levin, Tyler Markkanen}

\maketitle

\begin{abstract}
In a 1977 paper, Steffens identified an elegant criterion for determining when a countable graph has a perfect matching.  
In this paper, we will investigate the proof-theoretic strength of this result and related theorems.
We show that a number of natural variants of these theorems are equivalent, or closely related, to the ``big five'' subsystems of reverse mathematics.

The results of this paper explore the relationship between graph theory and logic by showing the way in which specific changes to a single graph-theoretic principle impact the corresponding proof-theoretical strength.
Taken together, the results and questions of this paper suggest that the existence of matchings in countable graphs provides a rich context for understanding reverse mathematics more broadly.

\end{abstract}

\section{Introduction}

The search for necessary and sufficient conditions for a graph to have a perfect matching has a long and distinguished history, and has resulted in a range of theorems both for bipartite graphs and for graphs in general.  
For theorems concerning matchings in bipartite graphs, much research has been done to classify their computational and proof-theoretical strength \cite{aharoni_magidor,shafer,simpsonkdt,towsner}.
 
In this paper, we will turn our attention to classifying the strength of theorems about the existence of matchings in \emph{general} graphs.  In particular, we will study a number of related theorems due to Steffens~\cite{steffens}, who gave a particularly elegant criterion for the existence of perfect matchings in countable graphs.

\subsection{The existence of perfect matchings}
Given a graph $G = (V, E)$ with vertices $V$ and edges $E$, a \emph{matching} is a subset $M \subseteq E$ so that no vertex of $G$ is incident to more than one edge of $M$.  A matching is \emph{perfect} if every vertex of $G$ is incident to an edge of $M$.   

Now suppose that $M$ is some matching, and that $v$ is a vertex not covered by $M$.  It is natural to ask whether $M$ can be modified to cover $v$, by starting at $v$ and repeatedly adding and removing edges.  To make this precise, consider a matching $M$ and a path $P=(v_i)_{i < k \le \omega}$, where a \emph{path} $P$ is an injective sequence of vertices such that $\{v_{i},v_{i + 1}\} \in E$ for each $i+1<k$.

A path is called \emph{$M$-alternating} if the edges $\{v_{i},v_{i + 1}\}$ alternately lie in $M$ and $E \setminus M$, and an $M$-alternating path $P$ is called \emph{$M$-augmenting} if it starts at a vertex $s \in V(G \setminus M)$ and either (1) $P$ is infinite, or (2) $P$ terminates in a vertex $v \in V(G \setminus M)$.  
Notice that if $P$ is an $M$-augmenting path, then swapping the membership of those edges in the path results in a strictly larger matching.  In other words, the existence of an $M$-augmenting path allows us to extend the matching $M$ to cover an additional one or two vertices.  We often identify $P$ with its constituent edges, so this process of alternating the edges of $M$ which are a part of $P$ is the same as taking the symmetric difference $M \mathbin{\Delta} P$ of $M$ and the edges of $P$.

It is natural to ask whether there is a connection between the existence of perfect matchings and the existence of augmenting paths for imperfect matchings.  For countable graphs, this question was answered  by Steffens~\cite{steffens} using the following terminology.\footnote{For uncountable graphs $G$, there is a different condition classifying the $G$ which have a perfect matching, but that condition is more complicated.  See Aharoni's results of \cite{aharonimatchings}.}

\begin{definition}
A countable graph $G$ is said to satisfy \emph{condition~(A)} if for every matching $M$ and for every vertex $s \in V(G \setminus M)$ there exists an $M$-augmenting path which starts at $s$.
\end{definition}

\begin{theorem}[Steffens~\cite{steffens}] \label{steffensThm}
  A countable graph $G$ has a perfect matching if and only if $G$ satisfies condition~(A). 
\end{theorem}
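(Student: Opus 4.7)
\noindent\textit{Proof plan.} The forward direction is the easier half. Given a perfect matching $N$, a matching $M$, and $s \in V(G) \setminus V(M)$, I would analyze the symmetric difference $H := M \mathbin{\Delta} N$. Every vertex has degree at most one in each of $M$ and $N$, so $H$ has maximum degree two, and its components are paths and even cycles with edges alternating between $M \setminus N$ and $N \setminus M$. Since $s \in V(N) \setminus V(M)$ (using that $N$ is perfect), $s$ has degree exactly one in $H$ and is therefore the endpoint of a path-component $P$ whose edges alternate between $N \setminus M \subseteq E \setminus M$ and $M \setminus N \subseteq M$. If $P$ is infinite it is already $M$-augmenting. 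If $P$ is finite with other endpoint $t$, a short case analysis on whether the last edge lies in $M \setminus N$ or in $N \setminus M$ (using the perfectness of $N$ and the matching properties of $M$ and $N$ to rule out the former) shows that $t \in V(N) \setminus V(M)$, so $P$ is $M$-augmenting.

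For the reverse direction, I would enumerate $V(G) = \{v_0, v_1, \ldots\}$ and build a nested sequence $M_0 \subseteq M_1 \subseteq \cdots$ of finite matchings under the invariant that $v_0, \ldots, v_{n-1} \in V(M_n)$ and that the induced subgraph $G \setminus V(M_n)$ still satisfies condition~(A). The union $M := \bigcup_n M_n$ is then a perfect matching, because $V(M) \supseteq \{v_0, v_1, \ldots\} = V(G)$. Starting from $M_0 = \emptyset$, at stage $n$ I set $M_{n+1} = M_n$ whenever $v_n$ is already covered; otherwise, applying (A) in $G \setminus V(M_n)$ to the empty matching shows that $v_n$ has at least one neighbor in $G \setminus V(M_n)$, and I then select a neighbor $u$ for which $G \setminus V(M_n) \setminus \{v_n, u\}$ still satisfies (A), setting $M_{n+1} = M_n \cup \{\{v_n, u\}\}$.

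The reverse direction thus reduces to the following key lemma: \emph{if a countable graph $H$ satisfies (A) and $v \in V(H)$ has at least one neighbor in $H$, then some neighbor $u$ of $v$ has the property that $H \setminus \{v, u\}$ still satisfies (A)}. I plan to prove this by contradiction. If every neighbor $u$ of $v$ admitted a witness $(N_u, t_u)$ to the failure of (A) in $H \setminus \{v, u\}$ --- a matching $N_u$ of $H \setminus \{v, u\}$ with an uncovered vertex $t_u$ and no $N_u$-augmenting path --- then in $H$ the matching $N_u \cup \{vu\}$ together with $t_u$ would, via (A) for $H$, yield an augmenting path $P_u$ from $t_u$. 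This $P_u$ must traverse the edge $vu$, since otherwise a routine truncation argument yields an $N_u$-augmenting path in $H \setminus \{v, u\}$, contradicting the choice of witness. Analyzing $P_u$ on either side of $vu$ identifies another neighbor of $v$ and lets one play the witnesses for distinct neighbors of $v$ against each other. Executing this combinatorial analysis cleanly --- whether by a K\"onig-lemma argument applied to the tree of induced witness-interactions or by a well-foundedness reduction on the possible ranks of the augmenting paths --- is the main technical obstacle; the rest of the argument is bookkeeping.
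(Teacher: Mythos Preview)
Your forward direction is correct and matches the paper's Proposition~\ref{prop-perfect-to-A}.

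The reverse direction has a genuine gap. Your key lemma is true, but it is \emph{equivalent} to the theorem you are trying to prove (modulo the easy direction): if $H$ has a perfect matching $N$, take $u$ to be the $N$-mate of $v$ and observe that $N\setminus\{vu\}$ witnesses condition~(A) for $H\setminus\{v,u\}$; conversely, iterating the lemma yields the theorem. So the ``reduction'' is a reformulation, and all of the content of the theorem remains in the lemma.

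Your sketch for the lemma does not close. From the witness $(N_u,t_u)$ you correctly extract an augmenting path $P_u$ in $H$ that must traverse $vu$, and this does exhibit a second neighbour $u'$ of $v$. But nothing ties $(N_u,t_u)$ to $(N_{u'},t_{u'})$: the witnesses are chosen independently for each neighbour, so the map $u\mapsto u'$ carries no usable structure. When $v$ has infinitely many neighbours this map need not cycle, and there is no rank or finitely branching tree in sight on which to hang a K\"onig or well-foundedness argument; when the graph has infinite paths, augmenting paths carry no ordinal rank either. The ``main technical obstacle'' you flag is, in fact, the entire theorem.

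Steffens' proof supplies exactly the missing structure, via the notion of an \emph{independent} matching (one admitting no proper augmenting path). The argument runs: (i) apply Zorn's lemma to obtain a \emph{maximal} independent matching $M$; (ii) show that $G\setminus V(M)$ still satisfies condition~(A) and has no nonempty independent subgraphs; (iii) under that extra hypothesis, prove that any single vertex $s$ can be covered by a matching $M''$ (obtained from a maximal independent matching of $G\setminus s$ together with one augmenting path) whose removal again preserves condition~(A); (iv) iterate. The matching removed at each stage is typically infinite, not a single edge, and the independence machinery is what makes the preservation-of-(A) step provable. Your single-edge scheme bypasses this machinery and, as written, does not replace it with anything.
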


The implication in one direction is straightforward.

\begin{prop} \label{prop-perfect-to-A}
If a countable graph $G$ has a perfect matching, then $G$ satisfies condition~(A).  Furthermore, this holds over $\RCA$.
\end{prop}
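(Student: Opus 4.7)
Let $N$ denote the perfect matching whose existence we assume, and fix an arbitrary matching $M$ and a vertex $s \in V(G \setminus M)$. The strategy is to follow the path component of the symmetric difference $N \mathbin{\Delta} M$ starting at $s$. Since $N$ and $M$ are both matchings, each vertex has at most two edges in $N \mathbin{\Delta} M$, namely its $N$-edge and its $M$-edge (when these exist and differ). Because $N$ is perfect and $s \notin V(M)$, the unique $N \mathbin{\Delta} M$-edge at $s$ is its $N$-edge, and so $s$ sits at an endpoint of a path component of $N \mathbin{\Delta} M$ whose edges alternate between $N \setminus M$ and $M \setminus N$; this path is our candidate $M$-augmenting path.

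To carry this out within $\RCA$, the plan is to define the sequence $P = (v_i)_{i < k \leq \omega}$ by primitive recursion. The $N$-partner function $n \colon V \to V$ is total, and the partial $M$-partner function $m$ has $\Sigma^0_0$ graph, so both are $\Delta^0_1$-definable from the given matchings. Put $v_0 := s$ and $v_1 := n(s)$, and, given $v_{i-1}$ and $v_i$, set $v_{i+1} := m(v_i)$ if $\{v_{i-1}, v_i\} \in N$ and $m(v_i)$ is defined, or $v_{i+1} := n(v_i)$ if $\{v_{i-1}, v_i\} \in M$. If the relevant partner is missing, the recursion halts. This primitive recursion is straightforwardly formalized in $\RCA$ and produces $P$ as a set.

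It then remains to verify in $\RCA$ that $P$ is an $M$-augmenting path. The $M$-alternation property is immediate from the case split in the recursion. Injectivity is proved by $\Sigma^0_0$-induction on the least $j$ at which a repetition $v_j = v_i$ could occur: the at most two $N \mathbin{\Delta} M$-neighbors of $v_j = v_i$ force $v_{j-1} \in \{v_{i-1}, v_{i+1}\}$, contradicting minimality of $j$ (or yielding a loop, which is excluded). For the termination condition, a brief case analysis rules out the possibility that the recursion halts with $\{v_{k-1}, v_k\} \in M$, because the perfect matching $N$ always supplies a next vertex $n(v_k)$, which cannot equal $v_{k-1}$ since $\{v_{k-1}, v_k\} \notin N$. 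Hence the recursion can halt only when $\{v_{k-1}, v_k\} \in N$ and $m(v_k)$ is undefined, giving $v_k \in V(G \setminus M)$ and so a finite $M$-augmenting path; if the recursion never halts, $P$ is an infinite $M$-alternating path starting at $s$, also $M$-augmenting by definition.

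The main obstacle is bookkeeping rather than depth: one must carefully track, at each step of the recursion, which of the (at most two) $N \mathbin{\Delta} M$-edges at $v_i$ has already been consumed, and verify that this information is faithfully encoded in the case split. All predicates involved are $\Sigma^0_0$ in $N$, $M$, and the edge relation; the recursion is primitive; and both the injectivity and the terminal-vertex analyses use only bounded induction, so the entire argument lies comfortably within $\RCA$.
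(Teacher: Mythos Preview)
Your proof is correct and follows essentially the same approach as the paper's: both construct the $M$-augmenting path by walking the $N$/$M$-alternating component of $N \mathbin{\Delta} M$ starting at $s$, using the $N$-partner and $M$-partner alternately, and both observe that the resulting sequence is $\Delta^0_1$ in $M$ and $N$. Your write-up is somewhat more careful about the $\RCA$ bookkeeping---you make explicit the primitive recursion, the injectivity argument via degree $\leq 2$ in $N \mathbin{\Delta} M$, and the terminal-vertex case analysis---whereas the paper leaves these routine verifications implicit.
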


\begin{proof}
Fix a graph $G$ and a perfect matching $N$.  
For any imperfect matching $M$ and any $s \notin V(M)$, comparing $M$ with $N$ allows us to define an $M$-augmenting path $(v_i)_{i < k \leq \omega}$, as required by the statement of condition~(A).  
To see why, let $v_0 = s$ and $v_1$ be the neighbor of $s$ such that $\{s, v_1\} \in N$.  
If $v_1 \notin V(M)$, then $P=(s,v_1)$ is the desired $M$-augmenting path. Otherwise, if $v_1\in V(M)$, there is a $v_2$ such that $\{v_1, v_2\} \in M$.  
But then, because $N$ matched $v_1$ to $s$ and because $v_2\in V(N)$, there must be a distinct $v_3$ such that $\{v_2, v_3\} \in N$. 
Continuing like this, either we end with $v_k$ outside of $V(M)$ or we never end.  In either case, the path is $M$-augmenting.
Furthermore, this path is clearly $\Delta^0_1$ in $M$ and $N$, so the path exists by $\RCA$.
\end{proof}

It follows that the strength of \Cref{steffensThm} is contained in the implication ``If condition~(A) holds of a graph $G$, then there is a perfect matching of $G$.''  We will call this direction of the biconditional the \textit{Perfect Matching Theorem}, or $\ST$.  

Given a matching $M$, the corresponding vertex set $V(M)$ is called the \emph{support} of $M$.  Steffens points out that his proof of \Cref{steffensThm} proves an apparently stronger result: even graphs that do not satisfy condition~(A) must have a matching of maximal support.
 
\begin{theorem}[Steffens~\cite{steffens}] \label{maximalityThm}
  For each countable graph $G = (V, E)$ there is a maximal subset $V' \subseteq V$ which has a perfect matching.
\end{theorem}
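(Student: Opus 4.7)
The plan is to derive \Cref{maximalityThm} from \Cref{steffensThm} (that is, from $\ST$) by restricting attention to a carefully chosen induced subgraph $G[V']$ on which condition~(A) holds, and then separately verifying that $V'$ is maximal for the property of having a perfect matching.

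For the construction of $V'$: since $G$ is countable, enumerate $V = \{v_0, v_1, \ldots\}$ and build a descending sequence $V = V_0 \supseteq V_1 \supseteq \cdots$ of vertex subsets. At stage $n$, check whether $G[V_n]$ satisfies condition~(A); if so, stop and set $V' = V_n$. Otherwise, choose a least witness to failure: a vertex $s_n \in V_n$ and a matching $M_n$ of $G[V_n]$ with $s_n \notin V(M_n)$ and no $M_n$-augmenting path in $G[V_n]$ starting at $s_n$; then set $V_{n+1} = V_n \setminus \{s_n\}$. Take $V' = \bigcap_n V_n$. By construction $G[V']$ satisfies condition~(A), so applying $\ST$ yields a perfect matching $M^*$ of $G[V']$, exhibiting $V'$ as a subset with a perfect matching.

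For maximality: suppose toward contradiction that $V'' \supsetneq V'$ has a perfect matching $N$. Let $s_n$ be the least vertex of $V'' \setminus V'$ (removed at some stage $n$) with witness matching $M_n$; note $V'' \subseteq V_n$. Comparing $M_n$ and $N$, the component of $s_n$ in the symmetric difference $M_n \mathbin{\Delta} N$ traces an $M_n$-alternating path in $G[V_n]$ beginning at $s_n$, since $s_n \in V(N) \setminus V(M_n)$. A standard matching-theoretic argument identifies this path as $M_n$-augmenting — contradicting the original witness — provided one rules out the possibility that the path terminates at a vertex $v \in V(M_n) \setminus V(N) = V(M_n) \setminus V''$, where the alternation fails to produce an $M_n$-augmenting path.

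The main obstacle will be handling that exceptional case. The plan is to argue that the path from $s_n$ terminating at such a $v$ can be reversed and composed with $N$ to produce an $N$-augmenting structure from a vertex in $V_n \setminus V''$, which in turn violates the perfectness of $N$ on $V''$ via a parity/support argument — or, alternatively, to strengthen the removal rule at each stage so that all vertices reachable by $M_n$-alternating paths from $s_n$ are excised together, pre-empting the problematic case. I expect a secondary obstacle in the reverse-mathematical bookkeeping: at each stage the construction detects \emph{absence} of an augmenting path, which is a $\Pi^1_1$ condition, so the whole argument is unlikely to go through in $\RCA$ alone and should require a stronger subsystem — consistent with the paper's theme of calibrating matching principles within the ``big five.''
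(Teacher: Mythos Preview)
The proposal has a fundamental gap that you did not flag: the assertion ``By construction $G[V']$ satisfies condition~(A)'' is unjustified, and in fact false. Take $G$ to be the infinite star with center $u_0$ and leaves $u_1,u_2,\ldots$, enumerated in that order. The center $u_0$ is never a witness to failure (any matching omitting $u_0$ must be empty, so $u_0\text{--}u_1$ is augmenting), while each leaf $u_i$ \emph{is} a witness: with $M=\{\{u_0,u_{i+1}\}\}$ the only alternating path $u_i\text{--}u_0\text{--}u_{i+1}$ ends inside $V(M)$. Your procedure therefore removes $u_1,u_2,\ldots$ in turn and never halts, leaving $V'=\{u_0\}$. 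But $G[V']$ is a single isolated vertex, which fails condition~(A) and has no perfect matching at all --- whereas any pair $\{u_0,u_i\}$ is a maximal matchable subset. Your suggested ``stronger removal rule'' fares no better: at stage $0$ it excises $\{u_1,u_0,u_2\}$, leaving a discrete graph. The acknowledged gap in the maximality argument (the $M_n\mathbin{\Delta}N$-path terminating in $V(M_n)\setminus V(N)$) is also genuine, and neither proposed fix resolves it.

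The paper's route is entirely different and hinges on a concept absent from your plan: \emph{independent} matchings (\Cref{def.indepmatching}). One first obtains a \emph{maximal independent} matching $M$ (the principle $\MIM$, proved in \Cref{lemma.SteffensLemma3.p12ca}); after deleting $V(M)$ and any vertices left with no remaining neighbors, the residual graph has no nonempty independent subgraphs, and Steffens' Lemma~5 (\Cref{lemma-s45}) then shows it satisfies condition~(A), so $\ST$ applies there. Maximality of the combined matching follows from \Cref{lemmaS1}; see \Cref{PMplusMIMimpliesMAX}. The moral is that $\Maximality$ is not a greedy corollary of $\ST$: it needs the structure theory of independent matchings, and indeed the paper shows $\Maximality$ is equivalent to $\ST+\MIM$, with $\MIM$ already implying $\PCA$.
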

 
We will call \Cref{maximalityThm} the \emph{Maximal Matching Theorem}, or $\Maximality$.  
The connection between the main theorems is straightforward, but illuminating.

\begin{proposition}\label{prop.MAXimpliesPM}
\Cref{maximalityThm} implies \Cref{steffensThm} over $\RCA$.
\end{proposition}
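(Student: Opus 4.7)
The plan is as follows. By \Cref{prop-perfect-to-A} it suffices, given a graph $G$ satisfying condition~(A), to produce a perfect matching of $G$ using only $\RCA$ and $\Maximality$. I will apply $\Maximality$ to $G$ to obtain a maximal $V' \subseteq V$ together with a perfect matching $M$ of the induced subgraph on $V'$ (which is what $\Maximality$ yields in its effective formulation), and then show that condition~(A) forces $V' = V$ on pain of contradicting the maximality of $V'$.

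Concretely, suppose for contradiction that some $s \in V \setminus V'$ exists. Since $V(M) = V'$, we have $s \in V(G \setminus M)$, so condition~(A) applied to the matching $M$ of $G$ and the vertex $s$ furnishes an $M$-augmenting path $P = (v_i)_{i < k \le \omega}$ with $v_0 = s$. The edge set $M \mathbin{\Delta} P$ exists in $\RCA$ because it is $\Delta^0_1$ in $M$ and $P$, and it is again a matching since $P$ is $M$-alternating. I will consider the two cases in the definition of $M$-augmenting.

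If $P$ is finite, say $P = (v_0, \ldots, v_{k-1})$ with $v_{k-1} \in V(G \setminus M)$, then the alternation, together with $v_0, v_{k-1} \notin V(M)$, forces the first and last edges of $P$ to lie in $E \setminus M$. Every vertex of $V(M)$ that appears along $P$ remains covered by $M \mathbin{\Delta} P$, and the two new endpoints $v_0$ and $v_{k-1}$ now lie in $V(M \mathbin{\Delta} P)$, so $V(M \mathbin{\Delta} P) = V' \cup \{v_0, v_{k-1}\}$, a strict superset of $V'$ on which $M \mathbin{\Delta} P$ is a perfect matching. This contradicts maximality. If instead $P$ is infinite, the same kind of bookkeeping shows that every $v_i$ for $i \ge 1$ remains matched, while $s = v_0$ is newly matched to $v_1$, so $V(M \mathbin{\Delta} P) = V' \cup \{s\}$, again contradicting maximality.

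The step I expect to require the most care is the verification that the witness returned by $\Maximality$ can be used effectively: condition~(A) quantifies over \emph{all} matchings $M$, so as long as $\Maximality$ is read as delivering an actual matching $M$ of $V'$ (not merely the set $V'$), the argument above is just a $\Delta^0_1$ manipulation of the edge sets of $M$ and $P$ and is available in $\RCA$. Apart from this bookkeeping, the only work is the routine case analysis on whether the augmenting path $P$ is finite or infinite.
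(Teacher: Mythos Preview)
Your proposal is correct and follows essentially the same argument as the paper: apply $\Maximality$ to obtain a matching $M$ of maximal support, and use condition~(A) at any uncovered vertex to produce an $M$-augmenting path $P$ with $M \mathbin{\Delta} P$ of strictly larger support, a contradiction. Your version is simply more explicit (splitting into the finite/infinite cases for $P$ and noting that the formalization of $\Maximality$ hands you the matching $M$ itself), but the method is the same.
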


\begin{proof}
Consider any graph $G$.  By \Cref{maximalityThm}, $G$ has a matching $M$ of maximal support.  Suppose also that $G$   satisfies condition~(A).  Then if there is any $v \notin V(M)$, there is an $M$-augmenting path $P$ starting at that $v$.  But recall from above that because $P$ is an $M$-augmenting path, then $M \mathbin{\Delta} P$ is a matching of $G$ that covers more vertices than $M$.  This contradicts the maximality of $M$. 
Furthermore, it is clear that $M \mathbin{\Delta} P$ is $\Delta^0_1$ in $M$ and $P$, so the proof goes through in $\RCA$.
\end{proof}

The proofs of \Cref{maximalityThm,steffensThm} revolve around a special kind of matching, which will be discussed in more depth in \Cref{sect.matchings-in-general}.

\begin{definition}
A matching $M$ is \emph{independent} if the only $M$-augmenting paths are length-$1$ paths that begin and end at vertices outside of $M$.
\end{definition}

Although it is also true that every graph has an edge maximal matching, edge maximal matchings are trivial to construct, and so are not of interest to this paper.

\subsection{Classifying computational strength}
We will analyze several versions of $\ST$ and $\Maximality$ (\Cref{steffensThm,maximalityThm}) from the viewpoint of computability theory and reverse mathematics.
Indeed, this paper can be seen as a ``case study'' in how varying specific graph-theoretic features of a mathematical principle directly impacts the logical strength of that principle.  

In the next section, \Cref{figure.ladder} summarizes our results showing that versions of \Cref{steffensThm,maximalityThm}, obtained by restricting them to different classes of graphs, are either equivalent or closely related to the standard subsystems of second order arithmetic.

This work continues several important veins of research into the reverse mathematics of principles in infinite graph theory.  
In particular, the current paper is closely related to the reverse mathematics of the K{\"o}nig duality theorem for countable bipartite graphs, written $\CKDT$, which relates the existence of matchings to vertex covers of bipartite graphs. 
This theorem has been shown to be equivalent to $\ATR$.  A proof in $\PCA$ and a reversal to $\ATR$ were provided by Aharoni, Magidor, and Shore~\cite{aharoni_magidor}, and  Simpson~\cite{simpsonkdt} later showed that this theorem is provable in $\ATR$.
Continuing this line of work, Shafer \cite{shafer} studied Menger's theorem for countable webs, which is a sort of generalization of $\CKDT$.
Shafer, in \cite{shafer}, showed that Menger's theorem is provable in $\PCA$, and defined an extended version that is equivalent to $\PCA$.  Later, Towsner \cite{towsner} refined this result to show that Menger's theorem for countable bipartite graphs is actually provable in the system that he defines in \cite{towsner} and calls $\TLPP$ (which stands for ``transfinite leftmost path principle''), which lies strictly between $\ATR$ and $\PCA$.

The work in the current paper continues this trend of increasing complexity.  It is not difficult to see that the statement itself of $\ST$ is significantly more complex than either $\CKDT$ or Menger's theorem for countable webs.  In particular, those two theorems are defined by $\Pi^1_2$ sentences, while $\ST$ is equivalent to a $\Pi^1_3$ sentence.  
In other words, $\ST$ is particularly interesting since it has a similar flavor to these other principles, but with a significantly higher sentence complexity.  

In other ways, this paper builds on research such as that of
Hirst \cite{hirst}, \cite{hirst-hughes} who studied a number of of variants of Hall's theorem concerning matchings of bipartite graphs.
By considering different necessary and sufficient conditions for a graph to have a perfect matching, Hirst obtained principles equivalent to systems around the levels of $\ACA$ and $\WKL$. 
The current paper continues and extends this line of research,  supporting and expanding our understanding of the relationship between specific graph-theoretic features and reverse mathematics.

We assume that the reader is familiar with computability theory and reverse mathematics, including the ``big five'' subsystems of second order arithmetic: $\RCA$, $\ACA$, $\WKL$, $\ATR$, and $\PCA$.  Other subsystems will be defined when they are introduced.  For additional background on computability theory, see \cite{rogers} or \cite{soare}.  For additional background on reverse mathematics, see \cite{simpson}.

\section{Summary of results}

We will study the existence of three main types of matchings in countable graphs: perfect matchings, matchings of maximal support, and maximal independent matchings.  More precisely, we will use the following definitions, formalized in second order arithmetic.

\newcommand{\MarkLt}{4pt}
\newcommand{\MarkSep}{3pt}
\tikzset{
  TwoMarks/.style={
    postaction={decorate,
      decoration={
        markings,
        mark=at position #1 with
          {
              \begin{scope}[xslant=0.2]
              \draw[line width=\MarkSep,white,-] (0pt,-\MarkLt) -- (0pt,\MarkLt) ;
              \draw[-] (-0.5*\MarkSep,-\MarkLt) -- (-0.5*\MarkSep,\MarkLt) ;
              \draw[-] (0.5*\MarkSep,-\MarkLt) -- (0.5*\MarkSep,\MarkLt) ;
              \end{scope}
          }
       }
    }
  },
  TwoMarks/.default={0.5}
}  
\begin{figure}[th]
\begin{center}
  \begin{tikzpicture}[->, > = stealth, scale = 1.25]
    \pgfsetlinewidth{0.8 pt}
    \node at (0, 8) (P2CAplus) {$\PtCA^+$};
    \node at (5, 6.5) (MAX)  {$\Maximality$};
    \node at (7.5, 6.5) (SteffensPlusL3)  {$\ST+\MIM$};
    \node at (0,7) (P2CA) {$\PtCA$}; 
    \node at (2,5.5) (MAXind) {$\MIM$};
    \node at (0,5) (PCA)  {$\PCA$};
    \node at (5,5.5) (seqStef) {Sequential $\ST$};
    \node at (0,4) (ATR)  {$\ATR$};
    \node at (5,4.5) (Stef)   {$\ST$};
    \node at (3,4.00) (FP-Stef)   {$\FPST$};
    \node[right] at (2.5,3) (CPM)   {$\CPM$};
    \coordinate (both) at (4,5.5);
    \node at (0,3) (S11AC){$\SAC$};
    \node at (0,2) (ACA)  {$\ACA$};
    \node[right] at (2.5,2) (FB)    {$\FB$; Locally Finite $\Maximality$};
    \node at (0,1) (WKL)  {$\WKL$}; 
    \node[right] at (2.5,1) (cFB)  {$\cFB$; Bounded $\Maximality$};
    \node at (0,0) (RCA)   {$\RCA$};
    \node[right] at (2.5,0) (Finite) {$\FST$; Finite $\Maximality$};
    \tikzstyle{every node}=[font=\tiny\itshape]
    \draw[TwoMarks] (MAX) -- (P2CA);
    \draw[double distance = 1 pt] (P2CAplus) -- (MAX) node [midway, below left] {\ss Cor.\ \ref{proofofMAX}};
    \draw[double distance = 1 pt] (P2CA) -- (MAXind) node[midway,right] {\ss Thm.\ \ref{lemma.SteffensLemma3.p12ca}};
    \draw[<->] (MAX) -- (SteffensPlusL3) node [midway, above] {\ss Thm.\ \ref{PMplusMIMimpliesMAX}};
    \draw (MAX) --  (MAXind)  node [midway, above left] {\ss Cor.\ \ref{cor.MAXimpliesMIM}};
    \draw (MAXind) -- (PCA) node [midway,above,xshift=-.2cm,yshift=.1cm] {\ss Thm.\ \ref{thm.MIM.implies.PCA}};
    \draw[line width=.8pt] (MAX) -- node [right]{\ss Prop.\ \ref{prop.MaxImpliesSequential}}(seqStef);
    \draw (seqStef) -- (Stef);
    \draw (seqStef) -- node[below right] {\ss Prop.\ \ref{seqToPca}} (PCA);
    \draw (Stef) -- (FP-Stef);
    \draw[dashed] (FP-Stef) -- (ATR) node[midway, above]{\tiny $+\Pi^1_1\mbox{-}\mathsf{TI}_0$} node[midway, below]{\ss Thm.\ \ref{reversetoATR}};
    \draw[double distance = 1 pt] (P2CAplus) -- (P2CA);
    \draw[double distance = 1 pt] (P2CA) -- (PCA);
    \draw[double distance = 1 pt] (PCA) -- (ATR);
    \draw[double distance = 1 pt] (ATR) -- (S11AC);
    \draw[double distance = 1 pt] (S11AC) -- (ACA);
    \draw[double distance = 1 pt] (ACA) -- (WKL);
    \draw[double distance = 1 pt] (WKL) -- (RCA);
    \draw [double distance = 1 pt] (Stef) -- node[below right] {\ss Thm.\ \ref{StefToS11AC} \& \ref{corollary.steffens-not-consequence-of-Sigma11AC}} (CPM);
    \draw [double distance = 1 pt] (PCA) -- node[above right] {\ss Cor.\ \ref{fpstinpi11ca}}  (FP-Stef);
    \draw[<->] (CPM) -- (S11AC) node [midway, below] {\ss Cor.\ \ref{disjointunion}};
    \draw[<->] (FB) -- node[below] {\ss Prop.\ \ref{acaIsFB} \& \ref{prop.MMforLocFin}} (ACA);
    \draw[<->] (cFB) -- node[below] {\ss Prop.\ \ref{wklIsCFB} \& \ref{prop.MMforLocFin}} (WKL);
    \draw[<->] (RCA) -- node[below] {\ss Prop.\ \ref{rcaToFinite}} (Finite);
  \end{tikzpicture}%
\end{center}%
\caption{Summary of results}
\label{figure.ladder}
\end{figure}
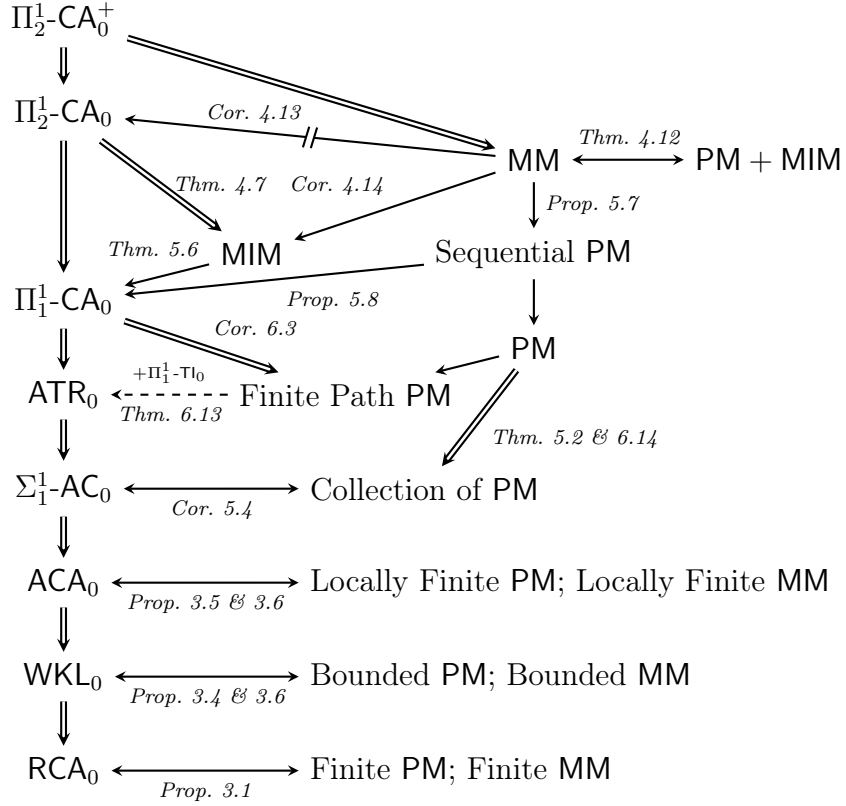

\begin{statement}\label{statements} 
The following statements can be formalized in $\RCA$.
  \begin{enumerate}
    \item The Perfect Matching Theorem, or $\ST$, denotes the statement,
    \emph{if $G$ is a graph satisfying condition~(A), then there is a perfect matching $M$ of $G$.}
    \par 
\noindent (We also study the natural restriction of $\ST$ to specific classes of graphs, including finite graphs, bounded graphs, and locally finite graphs.) 

    \item The Maximal Matching Theorem, or $\Maximality$, denotes the statement,
    \emph{for any graph $G$, there is a matching $M$ of $G$ so that $V(M)$ is not properly contained in the support of any other matching.}

    \item Maximal Independent Matching, or $\MIM$, 
    denotes the statement,
    \emph{for any graph $G$, there is an independent  matching $M$ of $G$ so that $V(M)$ is not properly contained in the support of any other independent matching.}

    \item $\CPM$ denotes the statement,
    \emph{if $X = \langle G_1, G_2, \dots \rangle $ codes a sequence of disjoint graphs, and if each $G_i$ has a perfect matching $M_i$, then there is a perfect  matching $M$ for the graph $G = \bigcup_i G_i$. }

    \item Sequential $\ST$ denotes the statement, \emph{if $X = \langle G_1, G_2, \dots \rangle $ codes a sequence of disjoint graphs, then $\ST$ holds of each $G_i$}.
\end{enumerate}
\end{statement}

The majority of the results of this paper are summarized in \Cref{figure.ladder}, which shows relationships between these principles and standard subsystems of second order arithmetic.  Solid arrows indicate a proof over $\RCA$, while the dashed arrow indicates a reversal over the specified base system.  When a strict implication is known to hold, we use a double arrow.  A non-redundant non-implication is indicated using a slash through an arrow.

During the course of the paper, we will study a number of versions of $\ST$ obtained by restricting the statement of $\ST$ to a specific class of graphs.  
In \Cref{sec.locallyfinite}, we study locally finite graphs. 
There, $\FB$ denotes the restriction of $\ST$ to graphs in which each vertex has finitely many neighbors. 
We show that $\FB$ is equivalent to $\ACA$ over $\RCA$. 
Similarly, 
$\cFB$ denotes the restriction of $\ST$ to only those locally finite graphs where there exists a function bounding the neighborhood relation.
  In other words, $\cFB$ is the restriction of $\ST$ to the reverse mathematical analogue of highly computable graphs. 
  We show that $\cFB$ is equivalent to $\WKL$ over $\RCA$.
These results illustrate and support the standard reverse mathematical intuitions concerning the combinatorial content of $\ACA$ and $\WKL$.  

The picture becomes more interesting in Sections \ref{sect.matchings-in-general}, \ref{sect.lower-bounds-general}, and \ref{sect.noinfpaths} when considering graphs that are not locally finite.  The principles concerning maximal matchings, $\Maximality$ and $\MIM$, are both above $\PCA$, while $\MIM$ is strictly below $\PtCA$, and $\Maximality$ is below $\PtCA^+$, but cannot imply $\PtCA$.  
The strength of $\ST$ falls somewhere between $\Maximality$ and $\FPST$, 
where $\FPST$ denotes the restriction of $\ST$ to graphs with no infinite paths (recall that a path cannot visit any vertex twice).  We show that $\FPST$ implies $\ATR$ over $\PTI$.

To see why this base system is so unusual, recall that the hypothesis, ``$G$ satisfies condition~(A),'' is $\Pi^1_2$ with parameter $G$, while the conclusion is $\Sigma^1_1$ (also with parameter $G$).  In other words, the hypothesis of $\ST$ is more complex than its conclusion.
This is in stark contrast with theorems such as K{\"o}nig's Lemma, whose hypothesis is a $\Pi^0_1$ sentence (the existence of arbitrarily long finite paths) and whose conclusion is a $\Sigma^{1}_1$ sentence (the existence of an infinite path).  When the conclusion is of higher complexity than the hypothesis, it is generally straightforward to verify the properties of any attempted coding.  In $\ST$, on the other hand, the hypothesis is more complex than the conclusion.  Thus, any attempt to code into $\ST$ requires proving a $\Pi^1_2$ property in order to draw a $\Sigma^1_1$ conclusion.
This discussion suggests one of several novel challenges introduced by the complexity of condition~(A), which will be a common theme throughout this paper.

\section{Matchings for locally finite graphs}
\label{sec.locallyfinite}

We begin by considering restrictions of $\ST$ to the simpler case of locally finite graphs. 

It is easy to see that $\ST$ for finite graphs is provable in $\RCA$.

\begin{prop} \label{rcaToFinite}
$\RCA$ proves $\ST$ for finite graphs.  That is, $\RCA$ implies that every finite graph satisfying condition~(A) has a perfect matching.
\end{prop}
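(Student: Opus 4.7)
The plan is to exploit the finiteness of $G$ to find a maximum-size matching via bounded search, then use condition~(A) to show that any such matching must in fact be perfect. Since $G$ is finite, the vertex and edge sets are coded by finite sets, and $\RCA$ can enumerate all subsets of $E$ by bounded search. In particular, $\RCA$ can produce a matching $M \subseteq E$ of maximum cardinality: the property ``$N$ is a matching of $G$'' is $\Delta^0_1$, and the property ``$|N|$ is maximum among all matchings of $G$'' is then decidable by a bounded check, which $\RCA$ handles.

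Next, I would argue by contradiction. Suppose $M$ is not perfect, so there exists some $s \in V$ with $s \notin V(M)$. By condition~(A) applied to the pair $(M,s)$, there is an $M$-augmenting path $P = (v_i)_{i < k \leq \omega}$ starting at $s$. Because paths are injective sequences of vertices and $G$ has only finitely many vertices, $P$ must be finite, with $k \leq |V|$. Since $P$ is $M$-augmenting and not infinite, it must terminate at a vertex $v_{k-1} \notin V(M)$.

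Finally, form $M' = M \mathbin{\Delta} P$. A routine check (identical to the observation that motivates the definition of augmenting paths) shows that $M'$ is a matching of $G$ with $V(M') = V(M) \cup \{v_0, v_{k-1}\}$, so $|M'| = |M| + 1$. This contradicts the maximality of $|M|$, and so $M$ is perfect. Each object constructed along the way ($M$, $P$, $M'$) is $\Delta^0_1$ definable from finite data about $G$, so the whole argument is formalizable in $\RCA$.

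I do not expect a substantial obstacle here: the only thing to be careful about is that the ``maximum matching'' step is genuinely a bounded search over a finite set rather than an appeal to a minimization principle over $\mathbb{N}$, and that the $M$-augmenting path promised by condition~(A) is guaranteed to be finite by the finiteness of $V$ (so we do not need any infinitary coding to extract its last vertex).
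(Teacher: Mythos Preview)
Your proposal is correct and follows essentially the same approach as the paper: find a matching of maximum size via a bounded search (the paper phrases this as an application of $\Sigma^0_1$ induction), then use condition~(A) to obtain an augmenting path that contradicts maximality if any vertex is unmatched. The paper is simply more terse, deferring the augmenting-path argument to the earlier discussion in Proposition~\ref{prop.MAXimpliesPM}.
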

\begin{proof}
Any finite graph $G$ has only finitely many matchings.  
Because being a finite matching is a $\Delta^0_1$ property, there is a maximal such matching by $\Sigma^0_1$ induction.  But since the graph satisfies condition~(A), this matching of maximal support must cover all vertices, as discussed in the proof of Proposition \ref{prop.MAXimpliesPM}.
\end{proof}

The case of $\ST$ for locally finite graphs is more interesting.  Steffens \cite{steffens} points out that this case can be proved directly using the Rado Selection Lemma for such graphs.  That proof can be formalized in $\ACA$ using the version of Rado's theorem in Theorem~III.7.8 of \cite{simpson}.  To consider the additional case of bounded graphs, and to lay a foundation for ideas that will be important later in this paper, we follow a more direct approach.

In particular, the proof below highlights the roles of both condition~(A) and the additional assumption that $G$ is locally finite.   

\begin{definition} 
A locally finite graph is \emph{bounded} provided there is a function $h:V \to \N$ such that for all $x,y \in V$, if $\{x,y\} \in E$ then $h(x) \ge y$.
\end{definition}

\begin{prop} \label{prop-steffens-locfin}
$\ACA$ proves  $\ST$ for locally finite graphs, and $\WKL$ proves $\ST$ for bounded graphs.
\end{prop}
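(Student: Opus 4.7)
The plan is to realize the perfect matching as an infinite branch in a tree $T$ of finite partial matchings, applying $\WKL$ in the bounded case and the full K{\"o}nig's lemma for finitely branching trees (equivalent to $\ACA$) in the locally finite case.

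First, enumerate $V = \{v_0, v_1, \ldots\}$ and define $T$ so that its level-$n$ nodes are the finite matchings $M$ of $G$ with $\{v_0, \ldots, v_{n-1}\} \subseteq V(M)$, ordered by set inclusion. In the bounded case, the function $h$ confines the edges of any such $M$ to a finite initial segment of $V$ that depends only on $n$, so $T$ is a tree with uniformly bounded finite branching that is computable in $\RCA$; this is precisely the regime of $\WKL$. In the locally finite case, each neighborhood $N(v_i)$ is finite but with no uniform bound available, so producing $T$ and recognizing its finite branching already requires $\ACA$.

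Next, I would show that $T$ has a node at every level by induction on $n$, starting from the empty matching. The key observation driving the induction is that if $M$ is a finite matching, then any $M$-augmenting path $P$ must be finite: $P$ alternates between $M$-edges and non-$M$-edges and uses each edge at most once, so it can contain at most $|M|$ edges of $M$ and hence has length at most $2|M|+1$. Therefore, given a matching $M$ at level $n$, if $v_n \notin V(M)$ then condition~(A) supplies an $M$-augmenting path $P$ from $v_n$, which is automatically finite and can be located by a bounded $\Sigma^0_1$ search; the symmetric difference $M \mathbin{\Delta} P$ is then a finite matching at level $n+1$ extending $M$ in the support. Applying the appropriate form of K{\"o}nig's lemma extracts an infinite branch $M_0 \subseteq M_1 \subseteq \cdots$ of $T$, and its union $M = \bigcup_n M_n$ is the desired perfect matching, since every $v_n$ lies in $V(M_{n+1}) \subseteq V(M)$.

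The main obstacle lies in the interaction between the presentation of $T$ and the form of K{\"o}nig's lemma required. For bounded graphs, $h$ gives a uniform bound on branching and so $\WKL$ suffices. For locally finite graphs, without such a bound one cannot enumerate the children of a node within $\RCA$, and we must pass to $\ACA$ both to recognize $T$ as a finitely branching tree and to invoke K{\"o}nig's lemma on it. A secondary subtlety, easily handled, is arranging that the tree $T$ is presented as a chain of inclusions (so that an infinite branch really produces a single matching) rather than as a collection of matchings at each level with no compatibility; this is ensured by the fact that the inductive step genuinely extends $M$ to $M \mathbin{\Delta} P$, which need not contain $M$, so one instead takes the tree of sequences $(M_0, M_1, \ldots, M_n)$ of finite matchings with $M_i \subseteq M_{i+1}$ produced by the construction.
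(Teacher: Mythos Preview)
Your overall strategy---build a tree of finite partial matchings, show it is infinite via condition~(A) and augmenting paths, then apply the appropriate form of K\"onig's lemma---is exactly the paper's approach. The gap is in how you set up the tree.

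You define the nodes at level $n$ to be finite matchings $M$ with $\{v_0,\dots,v_{n-1}\}\subseteq V(M)$, ordered by edge inclusion. But as you yourself notice, the inductive step produces $M\mathbin{\Delta} P$, which need \emph{not} contain $M$; so this is not a child of $M$ in your tree. Your proposed fix---``take the tree of sequences $(M_0,\dots,M_n)$ with $M_i\subseteq M_{i+1}$ produced by the construction''---is self-contradictory for the same reason: the construction does not produce edge-inclusions. What the augmentation step actually shows is only that \emph{some} matching covers $\{v_0,\dots,v_n\}$, not that it extends any particular matching from the previous level. So you have established that each level of your tree is nonempty, but not that there are arbitrarily long \emph{chains} under inclusion, which is what an infinite branch would be. (Separately, even if inclusion worked, your tree would not be finitely branching: a matching $M$ has infinitely many supersets, since one may add arbitrary extra edges.)

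The paper avoids all of this by encoding a matching as the function $i\mapsto a_i$ assigning to each vertex its partner: a node of $T$ is a string $\langle a_0,\dots,a_n\rangle$ such that $\{\{i,a_i\}:i\le n\}$ is a matching. Then (i) the tree is $\Delta^0_1$-definable in $\RCA$ already (your claim that forming $T$ needs $\ACA$ in the locally finite case is not correct for this encoding); (ii) branching at level $n$ is bounded by the neighbours of vertex $n$, so $T$ is finitely branching, and bounded when $G$ is bounded; (iii) an infinite path immediately gives a perfect matching $\{\{i,P(i)\}:i\in\N\}$. Crucially, the inductive step only needs to show that level $n{+}1$ is nonempty, not that the new witness extends the old one---the augmented matching $I_{n+1}=I_n\mathbin{\Delta} P$ lives \emph{somewhere} on level $n{+}1$, which is all K\"onig's lemma requires.
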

\begin{proof}
  Suppose $G$ is a locally finite graph with vertex set $\N$, and which satisfies condition~(A).  
Define a tree $T \subseteq \N^{<\N}$ by letting $\la a_0, \dots, a_n \ra \in T$ if and only if
    $\{\{0,a_0\},\dots,\{n,a_n\}\}$ is a matching of $\{0,\dots,n\}\cup\{a_0,\dots,a_n\}$. 

Note that any infinite path $\langle a_i :i\in\N \rangle$ through $T$ will correspond exactly to a set of edges $\{i,a_i\}$ that define a perfect matching of $V(G)=\N$.
It is clear that $T$ is definable in $\RCA$, and that $T$ is really is a tree since the matching that witnesses the inclusion of $\tau \in T$ also witnesses the inclusion of any prefix of $\tau$.

Note also that for any string $\langle a_0,\dots,a_n\rangle \in T$, $\{i,a_i\}$ must be an edge in $G$.  
Because $G$ is locally finite, $T$ will be finitely branching, and in the case that $G$ is bounded, $T$ will also be bounded. 

Because $G$ satisfies condition~(A), each imperfect matching of $G$ can be extended to one of greater support. The key use of this property comes in the proof that $T$ is infinite.  
  It suffices to show that for each $n$, there are some vertices $v_0,\ldots,v_n$ and some finite matching $I_n = \{\{0,v_0\},\dots,\{n,v_n\}\}$ of the vertices $F_n = \{0,\dots,n\}\cup\{v_0,\dots,v_n\}$.  We prove this using $\Sigma^0_1$ induction.  
  
For $n = 0$, by condition~(A) applied to the empty matching, there is an $\emptyset$-augmenting path starting at vertex $0$, which just consists of some neighbor of $0$, call it $v_0$.  Then $F_0 = \{0, v_0\}$ has matching $I_0 = \{\{0,v_0\}\}$.

  Now suppose $I_{n} = \{\{0,v_0\}, \{1, v_1\}, \ldots, \{n,v_n\}\}$ is a matching of
  \[
  F_n = \{0, 1, \ldots, n\} \cup \{v_0, v_1, \ldots, v_n\}. 
  \]
  If $n + 1 \in F_{n}$, then we are done (let $v_{n+1}$ be the vertex to which $n+1$ is already matched).  Otherwise, by condition~(A), there is an $I_{n}$-augmenting path $P_n$ beginning at $n+1$.  Because $V(I_{n})$ is finite, the path $P_n$ must be finite, and thus end in some vertex $v_{n+1} \notin F_{n}\cup\{n+1\}$.  Set $F_{n+1} = F_{n} \cup\{n+1,v_{n+1}\}$, and recall that $P_n \mathbin{\Delta} I_n$ is a matching $I_{n+1}$ of $F_{n+1}$.  In this case, note that $I_{n+1}$ will \emph{not} extend $I_{n}$, but \emph{will} extend some matching on the tree.

It follows that $T$ is an infinite, finitely branching tree.  
By $\ACA$ there is an infinite path $P$.  
In the case that $G$ is bounded then $T$ is also bounded, so the existence of this infinite path $P$ follows from $\WKL$ instead.

  All that remains is to verify that the set of edges $M = \{\{i,P(i)\} : i\in \N\}$ defines a perfect matching of $G$.
  Suppose that $M$ is not a matching.  This means some vertex in $G$ is incident to at least two edges of $M$.  Consider the three vertices incident to these two edges.  Now restrict $T$ to the level of $T$ containing all three vertices.  This produces a string in $T$ that does not define a matching, contradicting our definition of $T$.  Clearly $V(M) = \N$, so the matching is perfect.  Thus $M$ is a perfect matching of $G$.
\end{proof}

The upper bounds given above are optimal. 

\begin{prop} \label{wklIsCFB}
  $\ST$ for bounded graphs is equivalent to $\WKL$ over $\RCA$.
\end{prop}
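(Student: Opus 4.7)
The forward direction, that $\WKL$ proves $\ST$ for bounded graphs, was established in Proposition~\ref{prop-steffens-locfin}. For the reversal, the plan is to prove, over $\RCA$, that $\ST$ for bounded graphs implies $\WKL$. I would work with the formulation of $\WKL$ that every infinite subtree $T$ of $2^{<\omega}$ has an infinite path, and given such a $T$, construct (in $\RCA$) a bounded graph $G_T$ so that (i) $G_T$ satisfies condition~(A), and (ii) any perfect matching of $G_T$ uniformly computes an infinite path through $T$. Then applying $\ST$ for bounded graphs to $G_T$ produces the matching, from which the desired infinite path is read off.

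A natural coding introduces, for each $\sigma \in T$, a vertex $v_\sigma$, and, for each non-empty $\sigma \in T$, a ``bridge'' vertex $w_\sigma$ together with edges $v_{\sigma^-}w_\sigma$ and $w_\sigma v_\sigma$. In any perfect matching, each $w_\sigma$ is matched either ``up'' to $v_{\sigma^-}$ or ``down'' to $v_\sigma$; a consistent sequence of upward-pointing bridges along a single branch corresponds to an infinite path through $T$, and a $\Delta^0_1$ reading of the matching recovers it. Since $T \subseteq 2^{<\omega}$, every vertex of $G_T$ has degree at most three, so the graph is bounded in the sense of the paper.

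The main obstacle is step (i): verifying condition~(A) for $G_T$ in $\RCA$ without already assuming an infinite path exists. A naive implementation fails when $T$ has a node $\sigma$ whose descendants in $T$ are all finite: a partial matching may leave $v_\sigma$ uncovered with no $M$-augmenting path available, exactly because no branch below $\sigma$ extends forever. To handle this I would attach auxiliary ``escape'' structure to each vertex --- for example, a pre-committed two-way infinite alternating path joined to each $v_\sigma$ (or a bipartite scaffolding in which every vertex enjoys a forced ``swap partner'') --- so that there is always an infinite alternating path available from any uncovered vertex, while the only perfect matchings still arise from infinite branches of~$T$. Balancing the escape against boundedness, and checking that no new perfect matching is introduced that could fail to decode a path, is the delicate technical heart of the argument.

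If the direct construction proves too fragile, an alternative is to reduce instead to Hall's marriage theorem for bipartite graphs of bounded degree, which is known to be equivalent to $\WKL$ over $\RCA$ by the work of Hirst. In that setting a two-sided Hall condition directly implies condition~(A) via the standard augmenting-path argument, so $\ST$ for bounded graphs specializes to give bounded Hall, and hence $\WKL$; the only work needed is to check that the implication ``two-sided Hall $\Rightarrow$ condition~(A)'' goes through in $\RCA$ for a bipartite coding of an infinite binary tree.
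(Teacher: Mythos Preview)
Your proposal correctly identifies the forward direction and the central difficulty in the reversal, but neither of your two suggested routes is actually carried out, and both face real obstacles.

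For the direct tree-coding: your graph $G_T$ is precisely the doubling-tree construction (which the paper uses later, in \Cref{pathIffMatch}), and there one proves that $\hat T$ satisfies condition~(A) \emph{if and only if} $T$ has an infinite path. So verifying condition~(A) for $G_T$ in $\RCA$ is exactly as hard as finding the path --- not a technical delicacy but a genuine equivalence. Your ``escape structure'' idea is not specified, and any gadget that guarantees an infinite alternating path from every vertex will tend to create perfect matchings that do not encode a branch of $T$; you give no mechanism to prevent this.

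For the Hirst alternative: the step ``two-sided Hall $\Rightarrow$ condition~(A)'' is not as innocent as you suggest. The standard augmenting-path argument, applied to an unmatched vertex $v$, builds a tree of $M$-alternating paths; if no finite augmenting path exists, one needs an \emph{infinite} alternating path, and extracting that from a finitely branching tree is itself an appeal to (bounded) K\"onig's Lemma. So this route threatens to be circular.

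The paper avoids the whole issue by reversing to $\Sigma^0_1$ separation rather than to bounded K\"onig directly. Given disjoint ranges of injections $f,g$, it builds, for each $n$, a single odd-length path that grows at both ends until $n$ enters a range, then freezes at length $1\pmod 4$ or $3\pmod 4$ accordingly. The resulting graph is a disjoint union of odd finite paths and two-way infinite paths; for such graphs condition~(A) is verifiable in $\RCA$ by a one-line parity argument, with no tree or compactness hidden anywhere. A perfect matching then decides membership of the center edge, yielding the separating set. The moral is that the right encoding is one where condition~(A) is \emph{trivially} true, not one where it encodes the problem you are trying to solve.
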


\begin{proof}
  The forward implication is proved in \Cref{prop-steffens-locfin}.  For the reversal, we prove $\WKL$ via $\Sigma^0_1$ separation, which is sufficient by Lemma~IV.4.4 in \cite{simpson}.  Let $f, g:\N \to \N$ be one-to-one functions satisfying $\forall i, j \, [f(i) \ne g(j)]$.  We will define a set $X$ such that $\forall m \, [{f(m) \in X} \wedge {g(m) \notin X}]$.

  For each $n \in \N$, build a disjoint path of odd length (number of edges), in a way that keeps track of the ``center'' edge (for example, by using the evens as the endpoints of the center edges and odds for all other vertices).  At each stage of the construction, add edges to both ends of each path unless $n$ enters the range of $f$ or $g$. In this case, stop building the path after first ensuring that the length is either $1 \pmod 4$ if $n$ is in the range of $f$, or $3 \pmod 4$ if $n$ is in the range of $g$.  If $n$ never appears in the range of either function, build the path forever. 

  The resulting graph will consist of infinitely many disjoint paths, each either a two-way infinite path or else a finite path of odd length.  Such a graph satisfies condition~(A): given a matching $M$ and a vertex $v \notin V(M)$, follow the one or two paths leading away from $v$ for as long as they are $M$-alternating.  If $v$ is on the end of one path, there is one alternating path leading away from it.  In this case, if the alternating path were not augmenting, then it would end in a vertex matched by $M$.  Similarly, if $v$ was in the middle of a path, then the only way neither path leading away from it would be augmenting is if both paths terminate in a vertex matched by $M$.  In both these cases, the path would be even, a contradiction.

  Moreover, the graph is bounded (define $h$ using the effective construction of $G$).  Therefore by  $\ST$ for bounded graphs, there exists a perfect matching of $G$.  Define the set $X$ to be the set of $n$ such that the ``center'' edge in the path for $n$ is in the matching.  If $n$ is in the range of $f$, then the path for $n$ had length $1 \pmod 4$, so the only matching includes the center edge, giving $n \in X$.  On the other hand, if $n$ is in the range of $g$ then the path is length $3 \pmod 4$, so the center edge is not in the matching, giving $n \notin X$.  For any $n$ not in either range, the center edge might or might not be in the matching, which is fine.  Thus $X$ is a separating set as required.
\end{proof}

The full locally finite case was previously shown equivalent to $\ACA$ by Sakakibara \cite{sakakibara}.  We include a proof for completeness.

\begin{prop}[Sakakibara \cite{sakakibara}] \label{acaIsFB}
$\ST$ for locally finite graphs is equivalent to $\ACA$ over $\RCA$.
\end{prop}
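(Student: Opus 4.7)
The forward direction is exactly the first half of Proposition~\ref{prop-steffens-locfin}. For the reversal, my plan is to work in $\RCA$ and show that $\ST$ for locally finite graphs proves arithmetic comprehension. By the standard characterization, it suffices to show that the range of an arbitrary one-to-one function $f:\N\to\N$ exists as a set. The approach is to mimic the reversal used in Proposition~\ref{wklIsCFB}, but to replace the bounded ``odd-length path'' gadgets with locally finite gadgets whose length depends on the $\Sigma^0_1$ event ``$n\in\mathrm{range}(f)$.''  This will force the witnessing graph to be locally finite but not bounded, so that $\WKL$ no longer suffices and the full strength of $\ACA$ is required.

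Given $f$, I will construct a computable graph $G_f$ that is a disjoint union of ``witness gadgets'' $G_n$, one for each $n$.  Each $G_n$ is built around a path that is extended at every stage at which $n$ has not yet been enumerated by $f$; when $n$ enters the range at stage $s$, the path is capped off.  Auxiliary pendant vertices attached at one end force a canonical parity and a unique perfect matching on the component: in the ``never enters'' case, $G_n$ is essentially a one-sided ray with its forced matching, while in the ``enters at stage $s$'' case, $G_n$ is a finite path (with parity-correcting padding) whose unique perfect matching differs from the ray matching in a designated edge $e_n$. The graph $G_f$ is locally finite because each vertex has only a bounded number of neighbors inside its gadget, but it is \emph{not} bounded: the stage at which the cap appears in $G_n$ depends on $f$ in a way that cannot be majorized in $\RCA$, so no function $h$ witnessing boundedness exists.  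Condition~(A) is verified component-wise, using that alternating paths in a ray or in a finite path are essentially forced by the local structure of the gadget.

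Once $G_f$ is in place, applying $\ST$ for locally finite graphs produces a perfect matching $M$ of $G_f$, and $\Delta^0_1$ comprehension in $\RCA$ yields $\mathrm{range}(f) = \{n : e_n\in M\}$, completing the reversal.  The main obstacle I anticipate is the parity issue in the design of $G_n$: since the stage $s$ at which $n$ enters the range can be either even or odd, the naive ``one vertex per stage'' construction will leave the capped gadget with an odd number of vertices roughly half the time, hence with no perfect matching at all.  Resolving this requires a small amount of padding or a two-track path construction so that the gadget closes off to an even-length finite graph in every ``enters'' case; doing this while keeping the edge relation $\Delta^0_1$ in $f$ and while preserving condition~(A) on the component is the delicate point of the argument.
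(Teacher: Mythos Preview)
Your proposal has a genuine gap: the graph $G_f$ you describe is actually \emph{bounded}, so it can only witness a reversal to $\WKL$, not to $\ACA$. The issue is a conflation of two different kinds of ``unboundedness.'' You correctly observe that the \emph{length} of the gadget $G_n$ (the stage at which it is capped) cannot be computably majorized as a function of $n$. But boundedness of a graph in the sense of this paper asks only for a function $h$ with $h(v)\ge w$ whenever $\{v,w\}\in E$; it says nothing about the sizes of connected components. In a growing-path gadget, each vertex $v_{n,i}$ has at most two neighbors, namely $v_{n,i-1}$ and $v_{n,i+1}$, and their identities are determined purely by the local position $(n,i)$ in the path, independently of when or whether $n$ enters the range of $f$. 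So one may set $h(v_{n,i})$ to be (a code for) $v_{n,i+1}$, and this $h$ is computable. Your construction therefore lands inside Bounded $\ST$, which by Proposition~\ref{wklIsCFB} is only $\WKL$.

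The paper's reversal avoids this trap by a different mechanism: rather than letting the gadget \emph{grow}, each gadget has a fixed tiny shape --- a single edge $\{4n,4n+2\}$, extended to a length-$3$ path exactly when $n$ enters the range of $f$. The unboundedness comes from the \emph{labels} of the two extra endpoints: they are taken to be the least unused odd numbers at the moment of enumeration, so the odd neighbor of $4n$ encodes how many values $f$ has already enumerated. A computable bound $h(4n)$ on that odd neighbor would bound the stage at which $n$ can appear, and hence decide $\mathrm{range}(f)$. This is the idea your construction is missing: to force genuine non-boundedness, the \emph{identity} of some neighbor must carry $\Sigma^0_1$ information, not merely the length of the component.
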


\begin{proof}
  The forward implication is proved in \Cref{prop-steffens-locfin}.  
    For the reversal, let $f:\N \to \N$ be a one-to-one function.  We want to show that the range of $f$ exists. 

  Build a graph $G$ consisting of infinitely many disjoint paths, either of length $1$ (edge) or $3$.  Specifically, for each $n$, put an edge $\{4n, 4n + 2\}$ in the graph.  Whenever a number $n$ enters the range of $f$, add edges $\{j, 4n\}$ and $\{4n + 2, k\}$, where $j$ and $k$ are the least unused odd natural numbers.

  Since all vertices of $G$ are part of either a length-$1$ or length-$3$ path, $G$ satisfies condition~(A): for any matching $M$ and vertex $v_0 \notin V(M)$, if $v_0$ is even, then it must be adjacent to an unmatched vertex we can search for and find.  If $v_0$ is odd, then we can search for its even neighbor, which if matched will start a length-$3$ alternating path.  Moreover, $G$ is clearly locally finite.  Thus there is a perfect matching $M$ of $G$.  From this matching, define the range of $f$ as those $n$ for which $\{4n, 4n + 2\} \notin M$.
\end{proof}

Recall that $\Maximality$ implies $\ST$ over $\RCA$, and that $\Maximality$ is a corollary to Steffens' \emph{proof} of $\ST$.  When restricting to locally finite graphs, we will show that these principles are in fact equivalent. 

To show that $\FB$ implies Locally Finite $\Maximality$, we will use the fact that $\FB$ implies $\ACA$, and similarly for $\cFB$ and $\WKL$.  In other words, we prove that every locally finite graph has a maximal matching in $\ACA$, and that this holds in $\WKL$ for bounded graphs. 

\begin{prop} \label{prop.MMforLocFin}
Over $\RCA$, 
  $\Maximality$ for locally finite graphs
is equivalent to 
  $\FB$, 
and 
  $\Maximality$ for bounded graphs
is equivalent to 
  $\cFB$. 
\end{prop}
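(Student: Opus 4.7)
Each equivalence comprises a forward and reverse direction.

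The forward direction, namely ``$\Maximality$ for the class implies $\ST$ for the class,'' is immediate by restricting the argument of \Cref{prop.MAXimpliesPM} to the appropriate class of graphs: for a locally finite (resp.\ bounded) graph $G$ satisfying condition~(A), a matching of maximal support, provided by $\Maximality$, must be perfect, since any unmatched vertex would admit an augmenting path extending the matching and contradicting maximality. So Locally Finite $\Maximality$ implies $\FB$, and Bounded $\Maximality$ implies $\cFB$.

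For the reverse direction, since $\FB$ is equivalent to $\ACA$ by \Cref{acaIsFB} and $\cFB$ is equivalent to $\WKL$ by \Cref{wklIsCFB}, it suffices to prove $\Maximality$ for locally finite graphs in $\ACA$, and $\Maximality$ for bounded graphs in $\WKL$. The plan is to adapt the tree construction of \Cref{prop-steffens-locfin}. Given $G$ with $V(G) = \N$, I would build a tree $T \subseteq (\N \cup \{*\})^{<\N}$ whose nodes $\la a_0, \dots, a_n \ra$ code partial matchings of $\{0, \dots, n\}$: if $a_i \in \N$ then $\{i, a_i\} \in E$ and the edges $\{\{i, a_i\} : a_i \ne *\}$ form a matching, while $a_i = *$ marks vertex $i$ as explicitly \emph{abandoned} from the support. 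The tree further imposes a local maximality constraint: each $i$ with $a_i = *$ cannot be added to the matching by any rearrangement of the current partial matching on the finite subgraph induced by $\{0, \dots, n\}$ together with its neighbors. This tree is $\Delta^0_1$-definable in $G$, finitely branching (since each $a_i$ ranges over $\{*\} \cup N(i)$), and bounded in the bounded case. Using \Cref{rcaToFinite}, the finite subgraph on $\{0, \dots, n\}$ and its neighborhood admits a matching of maximal support, which yields a valid node of $T$ at level $n$; a $\Sigma^0_1$-induction would show consistency of these across levels, so that $T$ is infinite. Applying König's lemma in $\ACA$ (resp.\ weak König's lemma in $\WKL$) gives an infinite path through $T$, and the corresponding $M = \{\{i, f(i)\} : f(i) \ne *\}$ is a matching of $G$. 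Maximality of $V(M)$ follows from the standard symmetric-difference argument: if $V(M) \subsetneq V(M')$ for some matching $M'$, then a path in $M \mathbin{\Delta} M'$ starting at a vertex in $V(M') \setminus V(M)$ gives an $M$-augmenting path whose existence (at the level of $T$ containing all its vertices) contradicts the $*$-condition.

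The chief obstacle is establishing that $T$ is infinite while faithfully propagating abandonment commitments across levels: the local maximality clause at level $n$ must be consistent with an eventual globally maximal matching, and extending a valid level-$n$ node to a valid level-$(n+1)$ node requires showing that no vertex abandoned at level $n$ can be rescued through a new neighbor appearing at level $n+1$ without creating a new abandonment. A secondary subtlety is that locally finite graphs may admit infinite augmenting paths; handling the ``$M'$-existence'' step above, when the offending augmenting path happens to be infinite, requires local finiteness and a König-style reduction to a finite witness, which is precisely where $\ACA$ is invoked. In the bounded case the analogous reduction needs only weak König, matching the strength of $\WKL$.
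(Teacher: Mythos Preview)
Your forward direction is correct and matches the paper's.

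For the reverse direction, the tree-with-abandonment approach has a genuine gap that you flag but do not resolve. The $*$-constraint you impose at level $n$ (``$i$ cannot be rescued by any rearrangement on the level-$n$ subgraph'') is not preserved under prefixes, so your $T$ need not be a tree; equivalently, the level-$n$ nodes you exhibit via finite maximal matchings need not have valid prefixes. Concretely, take $V=\{0,1,2\}$ with edges $\{0,2\},\{1,2\}$: the maximal matching $\{\{1,2\}\}$ at level $2$ yields the node $\langle *,2,1\rangle$, but its prefix $\langle *\rangle$ violates the level-$0$ constraint, since $\{0\}$ \emph{can} be covered in the level-$0$ subgraph $\{0,2\}$. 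Repairing this by checking the $*$-constraint at level $i$ rather than at the current level $n$ forces the abandonment of $i$ to depend only on $a_0,\dots,a_{i-1}$, at which point one is essentially reinventing the paper's argument. The ``K\"onig-style reduction'' you invoke for infinite augmenting paths is also a red herring: once the tree is set up correctly, maximality requires no such analysis.

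The paper avoids the bookkeeping entirely by precomputing the maximal support \emph{before} building the tree. Define $b_0$ as the least non-isolated vertex and, recursively, $b_{n+1}$ as the least $v$ for which some finite matching covers $\{b_0,\dots,b_n,v\}$; this sequence is definable in $\ACA$ for locally finite $G$, and already in $\RCA$ for bounded $G$. If it terminates, the last witnessing matching is maximal. Otherwise, put $\langle a_0,\dots,a_n\rangle\in T$ iff $\{\{b_i,a_i\}:i\le n\}$ is a matching of $\{b_0,\dots,b_n\}\cup\{a_0,\dots,a_n\}$. This $T$ is trivially prefix-closed, infinite by construction, and finitely branching (bounded in the bounded case), so K\"onig's lemma (resp.\ weak K\"onig's lemma) yields a path and hence a matching $M$ with $V(M)\supseteq\{b_i:i\in\N\}$. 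For maximality, if $V(N)\supsetneq V(M)$ then the least $v\in V(N)\setminus V(M)$ lies strictly between some $b_j$ and $b_{j+1}$, yet $N$ itself witnesses that $\{b_0,\dots,b_j,v\}$ is coverable, contradicting the minimality of $b_{j+1}$. No symmetric-difference argument and no abandonment markers are needed.
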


\begin{proof}
The proof of \Cref{prop.MAXimpliesPM} shows that any maximal matching for a graph satisfying condition~(A) is perfect, so the existence of maximal matchings for locally finite (or bounded) graphs implies  $\ST$ for these graphs.  

For the other direction, we will show that $\ACA$ proves that every locally finite graph has a maximal matching, and $\WKL$ proves that every bounded graph has a maximal matching.
The argument is similar to the proof in Proposition \ref{prop-steffens-locfin}, except now we will skip vertices for which there is no way to extend the matching.

  First, define a sequence $\la b_i \ra_{i < k \le \omega}$ recursively as follows.  Let $b_0$ be the least non-isolated vertex of the graph.  For $n > 0$, let $b_n$ be the least vertex for which there is a matching in $G$ that covers $\{b_0, \ldots, b_n\}$.  The sequence is strictly increasing, and definable over $\ACA$ in general, and in fact $\RCA$ for bounded graphs.  If the sequence $\la b_i \ra$ is finite, then we are done, since any matching that witnesses the last term in the sequence will be a maximal matching.  In the case where the sequence is infinite, proceed as follows.

  Define a tree $T \subseteq \N^{<\N}$ by putting $\langle a_0, a_1, \ldots ,a_n\rangle \in T$ iff $\{\{b_i, a_i\} \st i \le n\}$ is a matching of $\{b_0, \ldots, b_n\} \cup \{a_0,\ldots,a_n\}$.  From the definition of $\la b_i \ra$, for each $i$ there is a string of length $i$ which will be on this tree. Therefore, the tree must be infinite.  For locally finite graphs, the tree is locally finite and definable over $\ACA$, thus in $\ACA$ the tree has a path.  For bounded graphs the tree is bounded and definable over $\RCA$, so has a path in $\WKL$.

  We claim that any path through the tree gives us a maximal matching $M$. Suppose there was a matching $N$ with $V(N) \supset V(M)$.  Let $v$ be the least vertex in $V(N) \setminus V(M)$, and as such $v \ne b_i$ for any $i \in \N$.  Let $j$ be greatest such that $b_j < v$.  Then restricting $N$ to its edges covering $\{b_0, \ldots, b_j, v\}$ gives a finite matching that covers these vertices, which would have put $v = b_{j+1}$, a contradiction.
\end{proof}

\section{Finding matchings in general}
\label{sect.matchings-in-general}
The proofs in Section \ref{sec.locallyfinite} made an essential use of the assumption that $G$ was locally finite, as K\"onig's lemma is false for infinitely branching trees. 

One na{\"i}ve approach to proving  $\ST$ in the general case would be to iteratively use augmenting paths to grow a matching that will cover an increasing number of vertices of the graph.
Unfortunately, there are graphs where this repeated augmentation results in a vertex such that the edge that covers it changes infinitely often, and consequently, that vertex will not be covered in the limit. 

To avoid this obstacle, Steffens' proof in \cite{steffens} centers around building matchings that are stable under augmentation.
An $M$-augmenting path is \emph{proper} provided it passes through an edge of $M$.
It is easy to see that an $M$-augmenting path $P$ is proper if and only if using $P$ to augment $M$ flips an edge in $M$. 

Thus the matchings stable under augmentation are the ones with \emph{no} proper augmenting paths (that is, where augmenting $M$ by any $P$ is equivalent to adding the single edge of $P$ to $M$). 

\begin{definition}[Steffens \cite{steffens}] \label{def.indepmatching}
A \emph{matching $M$ is independent} if there is no proper $M$-augmenting path starting at a vertex $s \in V(G) \setminus V(M)$.  \par
A subgraph $G'$ of $G$ is \emph{independent} if $G'$ has a perfect matching and if every perfect matching of $G'$ is independent.
\end{definition}

Note that the definition of the independence of $M$ is a $\Pi^1_1$ sentence with parameter $G$. 
These definitions are closely related to each other: it is provable in $\RCA$ that a subgraph $G'$ of $G$ is independent if and only if there is at least one independent perfect matching of $G'$ (\Cref{lemmaS2}).

Steffens' proof centers around two key insights.  First, Steffens showed that chains of independent subgraphs of increasing support can  be combined into a single \emph{maximal independent} subgraph of the union.  On its own, this does not prove  $\ST$ or $\Maximality$; it is conceivable that some maximal independent matching might be contained inside a chain of larger, \emph{non}-independent matchings.

Second, Steffens proves a sequence of lemmas that, together, allow one to extend any imperfect maximal independent matching to cover a single unmatched vertex, while also ensuring that the complement of (the subgraph induced by) this new matching still satisfies condition~(A).  By iterating this construction (to cover all unmatched vertices), a perfect matching can then be obtained.

Some of Steffens' lemmas are technical and go beyond the scope of this paper.  However, we give a brief introduction to the results most important for our arguments.  We also include some details to give the reader a sense of the underlying graph theory.  The reader should refer to \cite{steffens} for a complete picture.

To begin, note that the union of two independent subgraphs is \emph{not} always independent.
For example, consider the path of length $2$.  Each of the edges is an independent matching, but the union of those edges is not even matchable (so is clearly not independent).
The following lemma summarizes two ways of combining independent subgraphs that do preserve independence.

\begin{lemma}
\label{lemma.disjointunionindependent}
\label{lemma.indepremoveindep}
The following are provable in $\RCA$.
      (1)    The union of two \emph{disjoint} independent subgraphs is independent.
      (2)    If $I_1$ is an independent subgraph of $G$ and $I_2$ is independent   in $G \setminus I_1$, then $I_1 \cup I_2$ is independent in $G$. 
\end{lemma}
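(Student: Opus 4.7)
The plan is to prove (2) first and derive (1) as an immediate corollary: any proper $M$-augmenting path in $G \setminus I_1$ is also one in $G$, so if $I_2$ is independent in $G$ then it is independent in $G \setminus I_1$ as well, and the hypothesis of (2) is satisfied.

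For (2), fix any perfect matching $M$ of $I_1 \cup I_2$. Since $V(I_1)$ and $V(I_2)$ are disjoint, $M$ decomposes uniquely as $M = M_1 \cup M_2$ with each $M_i$ a perfect matching of $I_i$; by hypothesis $M_1$ is independent in $G$ and $M_2$ is independent in $G \setminus I_1$. Suppose for contradiction that $P = (v_0, v_1, v_2, \ldots)$ is a proper $M$-augmenting path in $G$ beginning at some $v_0 \in V(G) \setminus V(M)$. The key step is to split on whether $P$ ever visits $V(I_1)$. If not, every edge of $P$ has both endpoints outside $V(I_1)$ and so lies in $E(G \setminus I_1)$, while the $M$-edges of $P$ must all belong to $M_2$ because $M_1$-edges have both endpoints in $V(I_1)$. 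Thus $P$ is itself a proper $M_2$-augmenting path in $G \setminus I_1$, contradicting the independence of $M_2$.

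In the other case, let $v_m$ be the first vertex of $P$ in $V(I_1)$; since $v_0 \in V(G) \setminus V(M) \subseteq V(G) \setminus V(I_1)$ we have $m \geq 1$. Because the $M$-partner of any vertex in $V(I_1)$ also lies in $V(I_1)$, the edge $\{v_{m-1}, v_m\}$ cannot be in $M$, forcing $m$ to be odd. While the path stays in $V(I_1)$ after $v_m$, every $M$-edge it traverses lies in $M_1$ by the same partner observation. If the path eventually exits $V(I_1)$ at a least index $k > m$, then by the symmetric argument $\{v_{k-1}, v_k\} \notin M$ and $k$ is odd; otherwise the path must be infinite and remain in $V(I_1)$ thereafter, since a finite endpoint inside $V(M)$ would not be augmenting. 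In either subcase the subpath $(v_{m-1}, v_m, \ldots, v_k)$, or its infinite analogue $(v_{m-1}, v_m, v_{m+1}, \ldots)$, is a proper $M_1$-augmenting path in $G$ whose endpoints both lie outside $V(M_1) = V(I_1)$, contradicting the independence of $M_1$.

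The main technical obstacle is the bookkeeping in this second case: one must verify that the alternation of the extracted subpath is preserved, which reduces to confirming that each $M$-edge it crosses really belongs to $M_1$ rather than $M_2$. The witnessing indices $m$ and $k$ and the extracted subpath are $\Delta^0_1$-definable from $P$, $M_1$, $M_2$, and $V(I_1)$, and the case distinctions are resolved by classical logic, so the entire argument formalizes in $\RCA$.
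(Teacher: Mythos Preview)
Your proof is correct and follows essentially the same route as the paper: prove (2) by case-splitting on whether a hypothetical proper $M$-augmenting path meets $V(I_1)$, extract a subpath contradicting the independence of $M_1$ or $M_2$, and then derive (1) as a corollary. The only difference is that you begin from an \emph{arbitrary} perfect matching of $I_1\cup I_2$ and argue it decomposes as $M_1\cup M_2$, whereas the paper fixes the specific $M=M_1\cup M_2$ and (implicitly via \Cref{lemmaS2}) concludes the subgraph is independent; the case analysis itself is identical, and your bookkeeping on parity and first-entry/exit indices is more explicit than the paper's.
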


\begin{proof}
  The second property is from Aharoni, Lemma~4.4 in \cite{aharonimatchings}.  Assume that $I_1$ is an independent subgraph of $G$ and $I_2$ is independent in $G \setminus I_1$.  Suppose for a contradiction that $I_1 \cup I_2$ is not independent in $G$.  Note that $I_1 \cup I_2$ has a perfect matching, namely $M = M_1 \cup M_2$, where $M_1$ is a perfect matching of $I_1$ and $M_2$ is a perfect matching of $I_2$.  So assume $M$ is not an independent matching.  Then there is a proper $M$-augmenting path $P$ in $G$ starting at $s \in V(G) \setminus V(M)$.  If $P$ is disjoint from $I_1$, then this contradicts the independence of $I_2$ in $G \setminus I_1$.  So consider $I_1 \cap P$, which must be nonempty.  There must be a vertex $s' \in V(G) \setminus V(M_1)$ adjacent to one of the vertices in this part of the path.  Following $P$ starting at $s'$ cannot be infinite inside $I_1$ because that would be a proper $M_1$-augmenting path, but also cannot leave $I_1$ for the same reason.
  
The first part easily follows from (2).
\end{proof}

For clarity, we will distill the ``graph theoretic core'' of the proof of $\ST$ in \cite{steffens} into the following lemma.

\begin{lemma}[Adapted from \cite{steffens}]
\label{lem-extension} 
\label{lemma.removing-indep-(A)}
The following hold over the given system.
  \begin{enumerate}
    \item $(\RCA)$ If $G$ satisfies condition~(A) and if $M$ is an independent matching of $G$, then $G \setminus V(M)$ satisfies condition~(A).
    \item $(\PtCA)$ Suppose $G$ satisfies condition~(A) and $s \in V(G)$, but there are no nonempty independent subgraphs of $G$.  Then there is a matching $M''$ of $G$ such that $s \in V(M'')$ and $G \setminus V(M'')$ satisfies condition~(A).
  \end{enumerate}
\end{lemma}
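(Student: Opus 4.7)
For part (1), the plan is to apply condition~(A) of $G$ to the combined matching $M \cup N$ and then verify that the resulting augmenting path stays inside $G' := G \setminus V(M)$. Given a matching $N$ of $G'$ with an uncovered vertex $s$, observe that $V(M)$ and $V(N)$ are disjoint, so $M \cup N$ is a matching of $G$ with $s \notin V(M \cup N)$; condition~(A) then yields an $(M \cup N)$-augmenting path $P$ starting at $s$. I would argue by contradiction that $P$ never touches $V(M)$: if $v_k$ were the first vertex of $P$ in $V(M)$, then a parity and alternation analysis on the subpath of $P$ starting at $v_{k-1}$ shows that this subpath is automatically $M$-alternating --- edges between two $V(M)$-vertices lie in $M$ exactly when they lie in $M \cup N$, and edges with at least one endpoint outside $V(M)$ never lie in $M$ --- and that it contains the $M$-edge $\{v_k,v_{k+1}\}$. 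The subpath either remains in $V(M)$ forever or exits at some vertex outside $V(M)$, and in either case it is a proper $M$-augmenting path, contradicting independence of $M$. Hence $P$ avoids $V(M)$, making $P$ itself an $N$-augmenting path in $G'$. Every verification is $\Delta^0_1$ in the given data, so the argument goes through in $\RCA$.

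For part (2), the plan is to exploit the $\Pi^1_2$-definability of condition~(A) to let $\PtCA$ identify a suitable matching $M''$. The predicate $\Phi(K) \equiv$ ``$G \setminus V(K)$ satisfies condition~(A)'' is $\Pi^1_2$ with parameters $G, K$, so $\PtCA$ produces the set
\[
  \mathcal{G} = \{K : K \text{ is a finite matching of } G,\ s \in V(K),\ \Phi(K)\},
\]
and taking $M''$ to be the first element of $\mathcal{G}$ under a fixed effective enumeration of finite matchings finishes the proof, provided $\mathcal{G}$ is nonempty.

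Nonemptiness of $\mathcal{G}$ is where the hypothesis of no nonempty independent subgraphs becomes essential. My plan is iterative: start with an edge $\{s,t_0\}$ supplied by applying condition~(A) to the empty matching. Whenever the current tentative matching $K$ fails to lie in $\mathcal{G}$ --- as witnessed by some matching $N$ of $G \setminus V(K)$ and an uncovered $u$ with no $N$-augmenting path in $G \setminus V(K)$ --- the hypothesis applied to the matching $N \cup K$ inside $G$ (which cannot be independent) produces a proper augmenting path. Such a path must cross $V(K)$, and the structural way it crosses supplies a local modification of $K$ that absorbs the obstruction $(N,u)$; iterating this repair eventually produces some $K \in \mathcal{G}$.

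The main obstacle I anticipate is ensuring this iteration terminates in finitely many steps. Each repair addresses one obstruction but could \emph{a priori} introduce new ones elsewhere, and because condition~(A) quantifies universally over all matchings of the reduced graph, an infinite sequence of repairs is conceivable. Making the plan work requires the graph-theoretic machinery from \cite{steffens} relating augmenting paths, independent subgraphs, and the failure modes of condition~(A): the no-independent-subgraphs hypothesis is used to guarantee enough ``slack'' in $G$ that the rerouting always makes genuine progress, while $\PtCA$ provides exactly the comprehension needed to recognize, from within the system, when a suitable $K$ has been reached.
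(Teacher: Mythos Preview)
Your argument for part~(1) is correct and is essentially the paper's: combine $M$ with the given matching $N$ of $G\setminus V(M)$, apply condition~(A) in $G$ to obtain an $(M\cup N)$-augmenting path $P$ from $s$, and note that if $P$ ever entered $V(M)$ then the tail of $P$ beginning one step before that first entry would be a proper $M$-augmenting path, contradicting independence of $M$.

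For part~(2) your approach diverges from the paper's and carries a genuine gap. You spend $\PtCA$ as $\Pi^1_2$-comprehension to form the set $\mathcal G$ of \emph{finite} matchings $K$ with $s\in V(K)$ and $G\setminus V(K)$ satisfying condition~(A), then attempt to show $\mathcal G\neq\emptyset$ by an iterative repair procedure. There are two problems. First, nothing in the hypotheses ensures that any \emph{finite} such $M''$ exists; the paper's construction produces an $M''$ that is in general infinite, and your comprehension only ranges over codes of finite objects, so even if the scheme succeeded you might be searching an empty set. Second, as you yourself flag, the repair iteration has no termination argument: each step handles one witnessed failure of condition~(A), but condition~(A) quantifies over \emph{all} matchings of the reduced graph, so new obstructions can arise indefinitely, and the vague appeal to ``slack'' supplied by the no-independent-subgraphs hypothesis is not an argument.

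The paper's route is quite different and explains where $\PtCA$ is really needed. One applies $\MIM$ (provable in $\PtCA$ by Theorem~\ref{lemma.SteffensLemma3.p12ca}) to $G\setminus s$ to obtain a maximal independent matching $M'$ there---possibly empty, possibly infinite---then uses condition~(A) of $G$ to get an $M'$-augmenting path $P$ from $s$, and sets $M''=M'\mathbin{\Delta}P$. That $G\setminus V(M'')$ still satisfies condition~(A) is then exactly the content of Lemmas~5 and~7 of Steffens~\cite{steffens}, whose proofs are formalizable in $\ACA$. So the substantive use of $\PtCA$ is in producing the maximal independent matching, not in recognising a good candidate via comprehension.
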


To prove $\ST$ using \Cref{lem-extension}, Steffens' proof first obtains a maximal independent matching.  By \Cref{lem-extension} part (1), the graph of the remaining unmatched vertices continues to satisfy condition~(A). Also, by \Cref{lemma.indepremoveindep} part (2), the remaining graph has no nonempty independent subgraphs.  Thus by \Cref{lem-extension} part (2), we can find a matching $M''$ of $G$ that covers any one of the remaining unmatched vertices, and so that its removal preserves condition~(A).  Iterating this process, all vertices of $G$ can be covered by the edges of a matching.
 
Because our statement of Lemma \ref{lem-extension} is not exactly identical to any lemma of Steffens, we close our discussion with a short sketch of its proof, which simply indicates how it follows from the lemmas of Steffens \cite{steffens}.   

\begin{proof}[Proof sketch of \Cref{lem-extension}]
For the first property, let $M'$ be a matching of $G \setminus V(M)$, and let $s \in V(G) \setminus V(M)$ be a vertex not covered by $M'$.  Since $V(M)$ and $V(M')$ are disjoint, $M \cup M'$ is a matching of $G$.  Then, because $G$ satisfies condition~(A), there must be a path $P$ in $G$ that starts at $s$ and augments $M \cup M'$.
    Now it cannot be that $P$ ever enters $M$, for if it did, the restriction of $P$ to $M$ together with the vertex $s'$ before $P$ first enters $M$ and the first vertex of $P$ no longer in $M$ (if there is one) would be a proper $M$-augmenting path.  Since $M$ is independent, this cannot happen.  Thus $P$ is disjoint from $M$, so it is in fact an $M'$-augmenting path contained in $G \setminus V(M)$, as needed.

For the second property, let $M'$ be a maximal (possibly empty) independent matching of $G \setminus s$ (this requires an application of $\MIM$, which, in \Cref{lemma.SteffensLemma3.p12ca}, we will show is provable in $\PtCA$).  Because $G$ satisfies condition~(A), and because $s \notin V(M')$, there is an $M'$-augmenting path $P$ starting at $s$.  Let $M'' = M' \mathbin{\Delta} P$, which is a perfect matching of $V(M') \cup P$.
  Together, Lemmas~5 and 7 of \cite{steffens} are exactly the statement that $G \setminus V(M'')$ satisfies condition~(A). 
  Furthermore, it is straightforward to see that the proofs of Lemmas~5 and 7 of \cite{steffens} can be formalized inside $\ACA$.
\end{proof}

\subsection{Proofs of \texorpdfstring{$\MIM$}{MI} and \texorpdfstring{$\ST$}{PM}}
\label{section.mim}

Given a graph $G$, Steffens' original proof of $\ST$ used infinitely many applications of Zorn's Lemma to find a matching of $G$.
More precisely, Steffens used Zorn's Lemma to prove the existence of maximal independent matchings (our $\MIM$) and then recursively applied that principle $\omega$ times.
In this section, we will first show that $\MIM$ is provable in $\PtCA$ using an inner model technique.  Later, we will use a related line of reasoning to see that it is not possible for either $\ST$ or $\Maximality$ to imply $\PtCA$.

The inner model technique we use is analogous to the ones used in \cite{simpsonkdt} to prove that $\ATR$ implies $\CKDT$ and in \cite{shafer} to prove that $\PCA$ implies Menger's theorem for countable webs.  The primary difference is that while the above proofs were able to use $\omega$- or $\beta$-models; we use $\beta_2$-models because we will need to reflect the existence of a \emph{maximal} independent matching (a $\Sigma^1_2$ property) out of the model, and still have it be maximal independent.
We begin with some background on $\beta$ and $\beta_2$ models.

\begin{definition}[Simpson \cite{simpson}]\label{defn.coded.beta.model}
The following definition is made within $\RCA$.  A \textit{countable coded $\omega$-model} is a set $W \subseteq \N$, viewed as encoding the $L_2$-model $\M = (\N, S_{\M}, +, \cdot, 0, 1, <)$ with $S_{\M} = \{(W)_n : n \in \N\}$.

Let $0 \le k < \omega$.  
A $\beta_k$-model is an $\omega$-model $\M$ such that for all $\Sigma^1_k$ sentences $\varphi$ with parameters from $\M$, $\varphi$ is true if and only if $\M \models \varphi$.
A \textit{countable coded $\beta_k$-model} is a countable coded $\omega$-model $\M$ such that for all $e, m \in \N$ and $X, Y \in S_{\M}$, $\varphi_k(e, m, X, Y)$ is true if and only if $\M \models \varphi_k(e, m, X, Y)$, where $\varphi_k(e, m, X, Y)$ is a universal $\Sigma^1_k$ formula (see \cite{simpson}).  A \emph{countable coded $\beta$-model} is a countable coded $\beta_1$-model.
\end{definition}

To understand the semantic meaning of Definition \ref{defn.coded.beta.model}, let $\mathcal{N} = (\N,S_{\mathcal{N}})$ be any model of $\RCA$.  Then a set $W\in S_{\mathcal{N}}$ defines a countable coded $\beta_k$-model, in the context of the intended model $(\N,S_{\mathcal{N}})$, if for each $\Sigma^1_k$ sentence $\phi_k$ with parameters from $W$, $(\N,S_{\mathcal{N}})\models \phi_k$ if and only if $(\N,\{(W)_n:n\in \N)\models \phi_k$.

By combining Theorem~VII.7.4 and Theorem~VII.6.9(3) of \cite{simpson}, we see that, over $\ACA$, $\PCA$ (resp., $\PtCA$) is equivalent to the statement that for all $X \subseteq \N$, there exists a countable coded $\beta$-model (resp., countable coded $\beta_{2}$-model) $\M$ such that $X \in \M$.

To build a maximal independent matching, we require Lemmas~1 and 2 of \cite{steffens}. 

\begin{lem}[Lemma~1 of Steffens \cite{steffens}] \label{lemmaS1}
Let $G$ be a countable graph. $\RCA$ proves that if $M$ is a perfect matching of $G$, and $G'$ is an independent subgraph of $G$, then there is no edge $\{s, v\} \in M$ such that $s \in V(G) \setminus V(G')$ and $v \in V(G')$.
\end{lem}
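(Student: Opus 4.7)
The plan is to proceed by contradiction: assume there is an edge $\{s,v\}\in M$ with $s\in V(G)\setminus V(G')$ and $v\in V(G')$. Because $G'$ is independent, by definition it has a perfect matching $M'$, and $M'$ is an independent matching of $G$. My aim is to exhibit a proper $M'$-augmenting path starting at $s$, contradicting the independence of $M'$.

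The key construction exploits the observation that $M\cup M'$, viewed as an edge set on $V$, has maximum vertex-degree $2$, since any vertex has at most one $M$-partner and at most one $M'$-partner. The starting vertex $s$ has degree exactly $1$ in $M\cup M'$: it is covered by $M$ via $\{s,v\}$ but lies outside $V(G')=V(M')$. Beginning at $s$, I define a sequence $u_0=s, u_1=v, u_2, u_3, \ldots$ recursively by alternating: from an odd-indexed vertex, move to its unique $M'$-partner; from an even-indexed vertex $u_{2i}$ with $i\ge 1$, move to its unique $M$-partner. The $M$-partner always exists because $M$ is a perfect matching of $G$, while the $M'$-partner exists precisely when the current vertex lies in $V(G')$. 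The sequence terminates at the first step where a required partner fails to exist, and otherwise runs forever.

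I then need to verify three properties of the resulting object $P$. Injectivity follows from the degree bound: a first repeat would force some vertex to have degree $3$ in $M\cup M'$, which is impossible. The edges of $P$ are $M'$-alternating because an $M$-edge of $P$ cannot simultaneously be an $M'$-edge, for otherwise a vertex of $P$ would have two $M'$-partners. Finally, $P$ is a proper $M'$-augmenting path starting at $s\notin V(M')$: properness is immediate from the $M'$-edge $\{v,u_2\}\in P$; and if $P$ is finite with last vertex $u_k$, then $k$ must be odd (since transitions from even-indexed vertices always succeed), so $u_k\notin V(G')=V(M')$, satisfying the augmentation clause. If $P$ is infinite, it is augmenting by the corresponding clause of the definition. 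Either way, we contradict the independence of $M'$.

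The main subtlety is formalization in $\RCA$ rather than combinatorial depth. The sequence $P$ is defined by primitive recursion from $M$, $M'$, and $s$, hence it is $\Delta^0_1$-definable and exists in $\RCA$; injectivity, alternation, and the augmentation conditions are each arithmetically checkable and verifiable by $\Sigma^0_1$ induction along initial segments. The only point requiring attention is representing $P$ uniformly whether it is finite or infinite, which is routinely handled by coding $P$ as the graph of a partial function on $\N$ together with its domain.
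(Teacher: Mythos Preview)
Your proposal is correct and follows essentially the same approach as the paper's proof: both argue by contradiction, take an independent perfect matching $M'$ of $G'$, and build a proper $M'$-augmenting path starting at $s$ by alternately following $M$-edges and $M'$-edges from $v$. Your write-up is more explicit about injectivity, alternation, and the termination case, but the underlying construction and the formalization remark (that the path is $\Delta^0_1$ in $M\oplus M'$) match the paper's argument.
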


\begin{proof}
Suppose for a contradiction that there is an edge $\{s, v\} \in M$ with $s \in V(G) \setminus V(G')$ and $v \in V(G')$, and (by definition of independence of $G'$) let $M'$ be an independent perfect matching of $G'$.  We can now form a proper $M'$-augmenting path starting at $s$, which will contradict the independence of $M'$.  Start with the edge $\{s, v\} \in M$, and let $v_0 = s$ and $v_1 = v$.  By assumption, $v_1 \in V(M')$, so there is an edge in $M'$ from $v_1$ to some $v_2 \neq v_1$.  Note that in $M$, $v_2$ cannot be matched to $v_1$ (because $v_1$ is matched to $v_0 \neq v_2$) and is thus matched to another vertex $v_3$.  Repeating this process, we obtain
a path $P$ that either keeps going forever, or it leaves $G'$.  In either case, it is a proper $M'$-augmenting path.  
Finally, note that the definition of $P$ is $\Delta^0_1$ in $M \oplus M'$, so it exists by $\RCA$.
\end{proof}

\begin{lem}[Lemma~2 of Steffens \cite{steffens}] \label{lemmaS2}
  Let $G$ be a countable graph.  $\RCA$ proves that a subgraph $G'$ of $G$ is independent if and only if there exists an independent perfect matching for $G'$.
\end{lem}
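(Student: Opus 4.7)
The forward direction of the biconditional is immediate from the definition of an independent subgraph. For the substantive reverse direction, assume $M'$ is an independent perfect matching of $G'$; I must show that any other perfect matching $M''$ of $G'$ is also independent. My plan is to argue the contrapositive: given a proper $M''$-augmenting path $P$ in $G$ starting at some $v_0 \notin V(M'') = V(G')$, I will produce a proper $M'$-augmenting path from $v_0$, contradicting the assumed independence of $M'$.

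The construction will go through the intermediate matching $M^{\ast} = M'' \mathbin{\Delta} P$, whose support is $V(G') \cup \{v_0\}$ when $P$ is infinite, and $V(G') \cup \{v_0, v_k\}$ when $P$ terminates at some vertex $v_k$. I will then consider the graph $N = M' \mathbin{\Delta} M^{\ast}$. Since $M'$ and $M^{\ast}$ are both matchings, every vertex has $N$-degree at most $2$, so the components of $N$ are simple paths and even cycles. The key observation is that every $v \in V(G')$ is covered by both $M'$ and $M^{\ast}$ and therefore has $N$-degree $0$ or $2$, which forces the vertices of $N$-degree $1$ to lie in the symmetric difference $V(M') \mathbin{\Delta} V(M^{\ast})$, namely $\{v_0\}$ in the infinite case or $\{v_0, v_k\}$ in the finite case.

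Letting $Q$ denote the component of $N$ containing $v_0$, since $v_0$ has $N$-degree $1$, $Q$ is a simple path with $v_0$ as one endpoint; its other endpoint, if any, must be $v_k$, which likewise lies outside $V(M')$. The edges of $Q$ alternate between $M^{\ast} \setminus M'$ and $M' \setminus M^{\ast}$, beginning with the non-$M'$ edge $\{v_0, v_1\}$, so $Q$ is an $M'$-augmenting path starting at $v_0 \notin V(M')$. Moreover, since $v_1 \in V(G')$ has $N$-degree $2$ rather than $0$, $Q$ extends past $v_1$ along an edge of $M'$, making it proper. All of $M^{\ast}$, $N$, and the component $Q$ (traced by following the at most two $N$-edges incident to each vertex) are uniformly $\Delta^0_1$ in the given data, so the whole argument formalizes in $\RCA$. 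I expect the main subtlety to be the degree bookkeeping — in particular, verifying that $Q$ really is a path rather than a cycle and that its endpoints are precisely the elements of $V(M^{\ast}) \setminus V(M')$; once this is in place, the properness of $Q$ and the final contradiction drop out immediately.
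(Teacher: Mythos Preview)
Your proof is correct.  Both you and the paper obtain the desired proper $M'$-augmenting path by walking along the symmetric difference of $M'$ with a second matching, but the choice of that second matching differs.  The paper takes only the first edge $\{s,v\}$ of $P$ and then alternates directly between $M'$ and $M$ (your $M''$), defining $\{v_1,v_2\}\in M'$, $\{v_2,v_3\}\in M$, and so on.  You instead pass through the auxiliary matching $M^{*}=M''\mathbin{\Delta}P$ and walk along $N=M'\mathbin{\Delta}M^{*}$.

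Your detour through $M^{*}$ buys something concrete.  Because $V(M^{*})$ strictly contains $V(M')=V(G')$, the graph $N$ is guaranteed to have vertices of degree~$1$ (namely $v_0$, and $v_k$ when $P$ is finite), so the component $Q$ of $v_0$ is automatically a genuine path whose endpoints lie outside $V(M')$; the augmenting and ``proper'' conditions then fall out immediately from your degree bookkeeping.  In the paper's more direct alternation between $M'$ and $M''$, every vertex of $V(G')$ has degree $0$ or $2$ in $M'\mathbin{\Delta}M''$, so one must additionally check that the walk starting at $v_1$ does not close up into a cycle or stall on an edge lying in $M'\cap M''$---a point the paper leaves to its ``similar to the proof of \Cref{lemmaS1}'' reference.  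Your formulation sidesteps that verification entirely, at the cost of introducing one extra object.
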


\begin{proof}
The forward direction follows trivially from the definition of independent subgraph.  For the other direction, let $G'$ be a subgraph of $G$ and assume that there exists an independent perfect matching $M'$ for $G'$.  We must show that every perfect matching of $G'$ is also independent.  Suppose toward a contradiction that there is a perfect matching $M$ of $G'$ that is not independent.  Then there is a proper $M$-augmenting path $P$ beginning at some $s \in V(G) \setminus V(G')$, which has second vertex $v \in V(G')$.

Let $v_0 = s$ and $v_1 = v$.  Similar to the proof of \Cref{lemmaS1}, define a path $(v_i)_{i < k \leq \omega}$ such that $\{v_1, v_2\} \in M'$, $\{v_2, v_3\} \in M$, and so forth, alternating between the two matchings.  This is now a proper $M'$-augmenting path starting at $s$, which contradicts the independence of $M'$. 
\end{proof}

We can now prove the first main result of this section.  Note that if $P(G)$ is a property of a graph $G$, then we sometimes write \inM{$P(G)$} as an abbreviation for $\M \models P(G)$.

\begin{thm} \label{lemma.SteffensLemma3.p12ca}
$\PtCA$ proves $\MIM$. 
\end{thm}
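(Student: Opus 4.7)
The plan is to exploit the $\beta_2$-model characterization of $\PtCA$ just recalled before the theorem. Given a graph $G$, apply $\PtCA$ to obtain a countable coded $\beta_2$-model $\M$ containing $G$. The pivotal $\Sigma^1_2$ predicate, and the reason $\beta_2$-absoluteness is both necessary and sufficient, is \emph{there exists an independent matching of the subgraph $H$ covering the vertex $v$}: this is a $\Sigma^1_1$ existential over the matching $M$ whose independence (no proper $M$-augmenting path in $H$) is $\Pi^1_1$, and so $\M$ correctly locates witnesses to such existence statements.

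The construction proceeds inside $\M$. Enumerate $V(G)$ as $v_0, v_1, \ldots$ and build a nested sequence $M_0 \subseteq M_1 \subseteq \cdots$ of independent matchings of $G$. Set $M_0 = \emptyset$. At stage $n+1$, if $v_n \in V(M_n)$ set $M_{n+1} = M_n$; otherwise query $\M$ whether $G \setminus V(M_n)$ contains an independent matching $M'$ with $v_n \in V(M')$. If so, select such $M' \in \M$ and set $M_{n+1} = M_n \cup M'$, which is independent by \Cref{lemma.indepremoveindep}(2); if not, set $M_{n+1} = M_n$. Let $M = \bigcup_n M_n$. By construction, any edge of $M$ added after stage $k$ has both endpoints in $V(G) \setminus V(M_k)$, a property I will use repeatedly.

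To verify that $M$ is independent, suppose for contradiction that $P$ were a proper $M$-augmenting path from $s \notin V(M)$, and pick $k$ minimal so $M_k$ contains the first $M$-edge of $P$. If every $M$-edge of $P$ lies in $M_k$, then $P$ is itself a proper $M_k$-augmenting path, contradicting independence of $M_k$. If instead some $M$-edge of $P$ is missing from $M_k$, then its endpoints lie outside $V(M_k)$ by the disjointness observation above, and the prefix of $P$ up to the first such endpoint is a proper $M_k$-augmenting path ending outside $V(M_k)$ — again a contradiction. The main obstacle here is pinpointing precisely where $P$ ``leaves'' the finite-stage approximation $M_k$; this is the one place the argument needs care.

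For maximality, suppose toward contradiction that $N$ is an independent matching with $V(N) \supsetneq V(M)$, and let $v_n$ be the least element of $V(N) \setminus V(M)$. Consider the component $C$ of the symmetric difference $N \mathbin{\Delta} M_n$ containing $v_n$. Since $V(M_n) \subseteq V(N)$, every vertex of $V(M_n)$ has degree two in $N \mathbin{\Delta} M_n$ while $v_n$ has degree one, so $C$ is a path starting at $v_n$ and alternating $N$-edges and $M_n$-edges. If $C$ consists of a single edge $\{v_n, w\}$, then $w \notin V(M_n)$, so $\{\{v_n, w\}\}$ is an independent matching of $G \setminus V(M_n)$ covering $v_n$, contradicting the decision at stage $n+1$ not to extend. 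Otherwise $C$ is either infinite or finite of odd length at least three, ending at some vertex outside $V(M_n)$; in either case $C$ is a proper $M_n$-augmenting path in $G$, contradicting independence of $M_n$.
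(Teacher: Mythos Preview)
Your overall architecture---pass to a countable coded $\beta_2$-model containing $G$, build a nested chain of genuinely independent matchings $M_n$, and take the union---is the same as the paper's, and your argument that $M = \bigcup_n M_n$ is independent is correct.

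The maximality argument, however, has a genuine gap. In your single-edge case you assert that $\{\{v_n,w\}\}$ is an independent matching of $H = G \setminus V(M_n)$, but this need not hold: a single edge $\{v_n,w\}$ is independent in $H$ only when at most one of $v_n,w$ has a neighbour in $H\setminus\{v_n,w\}$, and nothing in your setup rules out the path $u,v_n,w,u'$ with $u,u' \in V(H)\setminus\{v_n,w\}$. (Concretely: take $H$ to be a path $y\text{--}v_n\text{--}w\text{--}x$; then $\{\{v_n,w\}\}$ is properly augmented by $y,v_n,w,x$.) Since, by \Cref{lemmaS1} applied inside $G[V(N)]$, no $N$-edge can cross $V(M_n)$, your ``length $\ge 3$'' case is in fact vacuous, so the single-edge case is the whole story and the gap is fatal as written.

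The repair is to produce a \emph{larger} witness in $H$: because no $N$-edge crosses $V(M_n)$, the restriction $N' = N \upto (V(N)\setminus V(M_n))$ is a matching of $H$ that covers $v_n$, and one checks directly that any proper $N'$-augmenting path in $H$ is already a proper $N$-augmenting path in $G$, contradicting independence of $N$. This is essentially the extra work your choice of construction forces on you. The paper sidesteps the issue by instead querying $\M$ at stage $n$ for an $\M$-independent matching of $G$ whose support extends $V(M_n)$ and contains $g_n$; then $N$ \emph{itself} directly witnesses the $\Sigma^1_2$ sentence needed for the reflection step, with no restriction argument required.
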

\begin{proof}
  By $\PtCA$, there is a countable coded $\beta_2$-model $\M$ containing the graph $G$ as an element.  
  Note that it is arithmetical with parameter $\M$ to define \inM{in\-de\-pen\-dent} matchings, as these are the columns of $\M$ which are matchings that are not properly augmented by  any other column of $\M$.  
  By arithmetical comprehension, using a code for $\M$ as a parameter, we can form the set of all \inM{in\-de\-pen\-dent} matchings in $\M$.  
  Arithmetically relative to this oracle, form a chain of \inM{in\-de\-pen\-dent} matchings $\{M_i\}_{i \in \N}$ in $\M$ by recursion on $n$ as follows.  
  Let $g_n$ denote the $n$-th vertex in $G$.  $M_0$ is the first \inM{in\-de\-pen\-dent} matching that contains $g_0$ if it exists, otherwise it is empty.  If there is an \inM{in\-de\-pen\-dent} matching that extends the support of $M_n$ and contains $g_{n + 1}$, set it to be $M_{n + 1}$.  Otherwise, set $M_{n + 1} = M_n$.  Finally, we form the chain $\{M^*_i\}_{i \in \N}$ by setting $M_0^* = M_0$ and $M^*_{i+1} = \big(M_{i+1} \setminus E(V(M_i))\big) \cup M^*_{i}$.  Then define $M^* = \bigcup_{i \in \N} M^*_i$, as in Steffens' proof.

We claim that each $M_i^*$ is \inM{in\-de\-pen\-dent}.  By definition, each $M_i$ is \inM{in\-de\-pen\-dent}.
By \Cref{lemmaS2}, every perfect matching of $V(M_i)$ is \inM{in\-de\-pen\-dent}, so $V(M_i)$ is an \inM{in\-de\-pen\-dent} subgraph. 
By \Cref{lemma.indepremoveindep}, 
removing an \inM{in\-de\-pen\-dent} subgraph preserves \inM{in\-de\-pen\-dence}, and taking a disjoint union of \inM{in\-de\-pen\-dent} subgraphs (or a union of a chain of \inM{in\-de\-pen\-dent} subgraphs) also preserves \inM{in\-de\-pen\-dence}. 

  We claim that the union $M^*$ is truly independent (outside of $\M$).  If it were not, then there would be a proper $M^*$-augmenting path $P$ starting at a vertex $v \in V(G) \setminus V(M^*)$.  Because the $M_i^*$ form a chain of matchings of increasing support, if $P$ properly augments $M^*$, then for some $i$, the restriction $P_i$ of $P$ to $M_i^*$ (the path $P_i$ is in $\M$ because it is definable from $M_i^*$) will properly augment $M_i^*$.  Because $M_i^*$ exists in $\M$, because $M_i^*$ is augmented by $P_i$ (a $\Sigma^1_1$ property with parameters in $\M$), and because  $\M$ is a $\beta_2$-model (so satisfies $\Sigma^1_2$ reflection), some proper $M_i^*$-augmenting path exists in the model.  But this contradicts the fact that each $M_i^*$ was \inM{in\-de\-pen\-dent}.

  We also claim that $M^*$ is maximal independent.  Suppose it were not, for a contradiction.  Then there would be an independent matching whose vertex set properly contains $V(M^*)$.  Let $g_n$ be the vertex of least index not in $V(M^*)$ that is in the larger independent matching.  Now we have
  \[\exists Y \, [{Y \text{ is independent}} \wedge {V(Y) \supsetneq V(M_{n}^*)} \wedge {g_n \in V(Y)}].\]
Because the statement ``$Y$ is independent'' is $\Pi^1_1$, the above is a $\Sigma^1_2$ sentence.

Since $\M$ is a $\beta_2$-model and the above $\Sigma^1_2$ sentence is true outside of $\M$, it must be true in $\M$.
However, recall that by our definition of the $M_i$, there must not be an \inM{in\-de\-pen\-dent} matching which extends $M_{n}$ and also contains $g_n$ (and $V(M_n) = V(M_n^*)$).  So we have a contradiction, and therefore $M^*$ is a maximal independent matching of $G$.
\end{proof}

One might wonder if the proof above could be simplified by using a less complex 
(than $\Pi^1_1$) way of saying that a matching is independent.  However, using an argument similar to those in \Cref{sect.lower-bounds-general}, it is easy to see that deciding whether a given matching in a computable graph is independent is $\Pi^1_1$-complete.

It is now possible to give an upper bound on the strength of $\ST$. 
The system $\PtCA^+$ is the system that permits $\omega$-many iterated applications of $\PtCA$. 

\begin{theorem}
  \label{theorem.steffens-in-Pi12-CA+}
  $\PtCA^+$ proves $\ST$.  
\end{theorem}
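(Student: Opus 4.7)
The plan is to formalize Steffens' original proof from \cite{steffens}, which builds a perfect matching by an $\omega$-stage recursion in which each stage invokes $\MIM$. Since $\MIM$ is provable in $\PtCA$ by \Cref{lemma.SteffensLemma3.p12ca}, and $\PtCA^+$ is precisely the system that permits $\omega$-many iterated applications of $\PtCA$ (equivalently, given any set $X$ it supplies an increasing tower of countable coded $\beta_2$-models $\mathcal{M}_0 \in \mathcal{M}_1 \in \cdots$ with $X \in \mathcal{M}_0$), the construction can be carried out within $\PtCA^+$.

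Fix a graph $G$ satisfying condition~(A). At stage~$0$, apply $\MIM$ to $G$ to obtain a maximal independent matching $N_0$, and set $H_0 = G \setminus V(N_0)$. By \Cref{lem-extension}(1), $H_0$ satisfies condition~(A); and by the maximality of $N_0$ together with \Cref{lemma.indepremoveindep}(2), $H_0$ contains no nonempty independent subgraph. At each successor stage $n+1$, assume inductively that we have a residual graph $H_n$ satisfying condition~(A) and having no nonempty independent subgraph, together with an accumulated matching on $V(G) \setminus V(H_n)$. Let $v$ be the least vertex of $G$ not yet matched. Apply \Cref{lem-extension}(2) to $H_n$ and $v$, which itself invokes one further application of $\MIM$; this yields a matching $M_{n+1}$ of $H_n$ with $v \in V(M_{n+1})$ such that $H_{n+1} := H_n \setminus V(M_{n+1})$ still satisfies condition~(A) and, as described in the discussion preceding \Cref{lem-extension}, still contains no nonempty independent subgraph. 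Then $M := N_0 \cup \bigcup_{n \geq 1} M_n$ is a perfect matching of $G$.

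The main obstacle is formalizing this $\omega$-iteration, since each stage requires its own application of $\PtCA$ and cannot be absorbed into any single $\beta_2$-model. This is exactly what $\PtCA^+$ handles: given the tower $\mathcal{M}_0 \in \mathcal{M}_1 \in \cdots$ over a parameter coding $G$, we perform stage $n$ inside $\mathcal{M}_n$, arranging that $N_0, M_1, \ldots, M_n$ all lie in $\mathcal{M}_{n+1}$. The sequence $(M_n)_{n \in \mathbb{N}}$ is then uniformly definable from the sequence of models, and $M$ exists by arithmetical comprehension. A secondary subtlety is verifying that the ``no nonempty independent subgraph'' property is preserved across successor stages so that \Cref{lem-extension}(2) continues to apply; this is handled by inspecting the structure of $M_{n+1} = M' \mathbin{\Delta} P$ produced in the proof of \Cref{lem-extension}(2) and re-invoking \Cref{lemma.indepremoveindep}(2).
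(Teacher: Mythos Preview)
Your proposal follows the same overall strategy as the paper: formalize Steffens' $\omega$-stage iteration, using $\MIM$ at each stage, and invoke $\PtCA^+$ to supply the iterated applications of $\PtCA$ (equivalently, the tower of $\beta_2$-models) needed to carry this out. That part is correct and matches the paper.

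There is one genuine gap. You assert that after each application of \Cref{lem-extension}(2) the residual graph $H_{n+1}$ again has no nonempty independent subgraph, so that \Cref{lem-extension}(2) may be invoked directly at the next stage. The paper does not claim this; it explicitly says ``we do not know whether $G_2$ has nontrivial independent subgraphs,'' and therefore restarts by applying $\MIM$ to the residual graph before the next use of \Cref{lem-extension}(2). Your suggested justification via \Cref{lemma.indepremoveindep}(2) does not go through: that lemma requires $I_1$ to be independent in the ambient graph, but $V(M_{n+1}) = V(M' \mathbin{\Delta} P)$ is nonempty and hence cannot be independent in $H_n$, which by hypothesis has no nonempty independent subgraphs. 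Nor can you argue via the maximality of $M'$ in $H_n \setminus s$: a hypothetical independent $I \subseteq H_{n+1}$ is independent only in $H_{n+1} = H_n \setminus V(M'')$, and independence does not transfer upward to the larger graph $(H_n \setminus s) \setminus V(M')$, since the larger graph may contain proper augmenting paths through vertices of $V(P) \setminus \{s\}$.

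The fix is easy and is exactly what the paper does: at each stage, first re-apply $\MIM$ to the current residual graph, remove that maximal independent matching (restoring the ``no nonempty independent subgraph'' hypothesis via \Cref{lemma.indepremoveindep}(2)), and only then invoke \Cref{lem-extension}(2). This costs two applications of $\MIM$ per stage instead of one, still $\omega$ in total, and remains within $\PtCA^+$.
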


\begin{proof} The idea of the proof is the same as Steffens'.  Let $G$ be a graph that satisfies condition~(A) and suppose $V(G) = \{v_0, v_1,\ldots\}$.  Apply $\MIM$ to obtain a maximal independent matching $M_0$ of $G$.  Define $G_1 = G \setminus V(M_0)$, which continues to satisfy condition~(A) by \Cref{lemma.removing-indep-(A)}.  Apply the second part of Lemma~\ref{lem-extension} (which entails another application of $\MIM$) to obtain a matching $M_1$ contained in $G_1$ which matches the vertex 
 of least index not matched by $M_0$, such that $G_2 = G_1 \setminus V(M_1)$ still satisfies condition~(A).  At this point, we do not know whether $G_2$ has nontrivial independent subgraphs (or unmatched vertices), so we start the process over again with $G_2$ as the new $G$.

Iterate the above process to obtain a sequence of matchings $\{M_n\}_{n\in \N}$, with pairwise disjoint vertex sets, such that every vertex of $G$ is matched by some $M_i$.  We see that $M = \bigcup_{n \in \N} M_n$ is the desired perfect matching of $G$.  Notice that we have potentially applied $\MIM$ $\omega$-many times in the above construction.
\end{proof}

This proof appears to do more work than is actually needed.  Because $\ST$ is \emph{true}, the graph $G$ has a perfect matching $M$.  Because $M$ covers each vertex of $G$, it trivially satisfies the definition of an independent matching, so $M$ is an independent matching of maximal support.  But that means that, as long as $\ST$ is true, the first application of $\MIM$ in its proof already yields the perfect matching of $G$.  The rest of the construction (which involved finding infinitely many new maximal independent matchings) was, \textit{in retrospect}, unnecessary.  
We will return to this observation in Section \ref{sect.ProofOfMAX}.

Indeed, the complexity of $\ST$ means that this upper bound cannot be sharp. 
We will use a generalization of a well known fact, following the presentation of  Marcone \cite{marcone}.
Although we will only need the cases for $k=1$ and $k=2$ in this paper, we include a general statement of the property for completeness.

\begin{prop}\label{doesnotimply}
Let $k \geq 1$.  
No $\Pi^1_{k+1}$ statement that is consistent with $\ATR$ can imply $\Pi^1_k$-$\mathsf{CA}_0$, even over $\ATR$.
\end{prop}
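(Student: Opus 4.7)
The plan is to argue by contradiction, iterating $\beta_k$-model reflection to produce an infinite chain of models whose ordinal heights cannot all coexist in the metatheory.  Write the hypothetical sentence as $\varphi \equiv \forall X\, \psi(X)$ with $\psi$ a $\Sigma^1_k$ formula, and suppose toward contradiction that $\varphi$ is consistent with $\ATR$ yet $\ATR + \varphi \vdash \Pi^1_k\text{-}\mathsf{CA}_0$.

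The first key step is a downward absoluteness lemma: whenever $\mathcal{N}$ is a countable $\omega$-model of $\ATR + \varphi$ and $\mathcal{M} \in \mathcal{N}$ is (from $\mathcal{N}$'s point of view) a countable coded $\beta_k$-model, then $\mathcal{M}$ itself satisfies $\ATR + \varphi$.  For the $\ATR$ clause, recall that every $\beta_k$-model with $k \geq 1$ is in particular a $\beta$-model, and $\beta$-models are automatically models of $\ATR$: the existence of the arithmetical iterate along a well-order is a $\Sigma^1_1$ assertion that is reflected into the model.  For the $\varphi$ clause, fix any $X \in \mathcal{M}$; the formula $\psi(X)$ is $\Sigma^1_k$ with parameter from $\mathcal{M}$, so $\beta_k$-absoluteness yields $\mathcal{N} \models \psi(X) \iff \mathcal{M} \models \psi(X)$, and the left side holds because $\mathcal{N} \models \varphi$.

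Now use consistency to fix a countable $\omega$-model $\mathcal{N}_0 \models \ATR + \varphi$.  The contradiction hypothesis forces $\mathcal{N}_0 \models \Pi^1_k\text{-}\mathsf{CA}_0$, so by Theorem~VII.7.4 of \cite{simpson} (cited just before \Cref{lemmaS1}) $\mathcal{N}_0$ codes a countable $\beta_k$-model $\mathcal{N}_1$.  The absoluteness lemma gives $\mathcal{N}_1 \models \ATR + \varphi$, whence $\mathcal{N}_1 \models \Pi^1_k\text{-}\mathsf{CA}_0$, so $\mathcal{N}_1$ in turn codes a $\beta_k$-model $\mathcal{N}_2$, and so on.  Iterating produces an infinite sequence $\mathcal{N}_0,\mathcal{N}_1,\mathcal{N}_2,\dots$ with each $\mathcal{N}_{i+1}$ coded inside $\mathcal{N}_i$.

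The main obstacle is converting this chain into a genuine contradiction, since the $\mathcal{N}_i$ are subsets of $\mathbb{N}$ rather than literal set-theoretic members of one another.  The standard resolution, following \cite{marcone}, is to attach to each $\mathcal{N}_i$ an ordinal height $\mathrm{ht}(\mathcal{N}_i)$, defined as the supremum of order types of well-orders appearing in $\mathcal{N}_i$.  Whenever $\mathcal{N}_{i+1}$ is coded inside $\mathcal{N}_i$, the latter can form the ordinal sum-plus-one of all well-orders present in $\mathcal{N}_{i+1}$, witnessing a well-order in $\mathcal{N}_i$ whose order type strictly exceeds $\mathrm{ht}(\mathcal{N}_{i+1})$; hence $\mathrm{ht}(\mathcal{N}_{i+1}) < \mathrm{ht}(\mathcal{N}_i)$.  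The resulting infinite descending sequence of ordinals in the ambient metatheory contradicts well-foundedness, completing the proof.
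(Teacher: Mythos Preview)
Your downward absoluteness lemma is exactly the heart of the matter, and the paper uses the same observation: a $\Pi^1_{k+1}$ sentence $\forall X\,\psi(X)$ passes down to any $\beta_k$-model, and $\beta$-models automatically satisfy $\ATR$. Where your argument diverges from the paper is in how you extract a contradiction, and there are two genuine gaps in your route.

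First, from the mere consistency of $\ATR+\varphi$ you only get \emph{some} model, not a countable $\omega$-model; your sentence ``use consistency to fix a countable $\omega$-model $\mathcal N_0$'' is not justified. Second, even granting an $\omega$-model, your ordinal-height step is not solid as written. You need a \emph{genuine} well-order in $\mathcal N_i$ whose order type strictly exceeds $\mathrm{ht}(\mathcal N_{i+1})$. But $\mathcal N_i$ is only known to be an $\omega$-model of $\Pi^1_k\text{-}\mathsf{CA}_0$, not a $\beta$-model, so when $\mathcal N_i$ uses $\Pi^1_1$-comprehension to pick out ``the well-orders in $\mathcal N_{i+1}$'' it may include pseudo-well-orders; the resulting sum-plus-one need not be a genuine well-order, and the strict inequality $\mathrm{ht}(\mathcal N_{i+1})<\mathrm{ht}(\mathcal N_i)$ does not follow.

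The paper's proof sidesteps both problems by staying syntactic. Writing $T=\ATR+\varphi$, your absoluteness lemma shows, \emph{provably in $T$}, that any coded $\beta_k$-model $\mathcal M$ satisfies $T$. Since $T\vdash\Pi^1_k\text{-}\mathsf{CA}_0$ by hypothesis, $T$ proves such an $\mathcal M$ exists, hence $T$ proves the existence of a model of $T$, i.e.\ $T\vdash\mathrm{Con}(T)$. G\"odel's Second Incompleteness Theorem then contradicts the assumed consistency of $T$. This uses your key lemma but avoids the need for an external $\omega$-model or any ordinal bookkeeping.
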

\begin{proof}
Consider the sentence $\forall X \psi(X)$, where  $\psi(X)$ is $\Sigma^1_k$ and suppose that the theory $T$ consisting of $\ATR$ and $\forall X \psi(X)$ is consistent.

Suppose toward a contradiction that $\forall X \psi(X)$ implies $\Pi^1_k$-$\mathsf{CA}_0$ over $\ATR$. 
Then $T$ proves the existence of a countable-coded $\beta_k$-model $\mathcal M$. 
By definition, $\mathcal M\models \psi(X)$ for each $X\in \mathcal M$, so $\mathcal M\models \forall X\psi(X)$. Since every $\beta$-model (and so every $\beta_k$-model) is a model of $\ATR$, we have that $\mathcal M$ is a model of $T$.

But by G\"odel's Second Incompleteness Theorem, no consistent theory $T$ which implies $\PCA$ can prove its own consistency, contradicting the assumption that $\forall X \psi(X)$ implies $\Pi^1_k$-$\mathsf{CA}_0$ over $\ATR$.
\end{proof}

Recall that the statement of $\ST$ itself has the form 
\[\forall X \, \big[(\forall Y_1) (\exists Y_2) \theta(X, Y_1, Y_2) \rightarrow \exists Z \, \psi(X, Z)\big],\]
with $\theta, \psi$ arithmetical, and so is equivalent (after prenexing) to a $\Pi^1_3$ formula.  Also, since $\PtCA^+$ implies $\ST$, it is consistent with $\ATR$.  Thus we get the following immediately from \Cref{doesnotimply}.

\begin{cor}\label{theorem.steffens-not-imply-Pi12-CA}
   $\ST$ does not imply $\PtCA$ over $\RCA$. 
\end{cor}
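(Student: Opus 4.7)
The plan is to apply \Cref{doesnotimply} with $k=2$, which reduces the task to verifying two syntactic/consistency facts about $\ST$. The first is to confirm that $\ST$ is (equivalent to) a $\Pi^1_3$ statement. Unpacking the principle as displayed just before the corollary, condition~(A) is $(\forall Y_1)(\exists Y_2)\theta(X, Y_1, Y_2)$ for an arithmetical $\theta$, while the existence of a perfect matching is $(\exists Z)\psi(X,Z)$ for an arithmetical $\psi$; after converting the implication and pulling quantifiers to the front in the standard way, the result is $\Pi^1_3$. So $\ST$ sits at level $k+1=3$, matching the hypothesis of \Cref{doesnotimply}.

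Next I would verify that $\ST$ is consistent with $\ATR$. This is where \Cref{theorem.steffens-in-Pi12-CA+} does the work for us: $\PtCA^+$ proves $\ST$, and $\PtCA^+$ is a (consistent) extension of $\ATR$, so the theory $\ATR + \ST$ has a model and hence is consistent.

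With these two facts in hand, \Cref{doesnotimply} applies directly and yields that $\ST$ does not imply $\PtCA$ over $\ATR$. Since $\RCA$ is weaker than $\ATR$, any proof of $\PtCA$ from $\ST$ over $\RCA$ would in particular be a proof over $\ATR$; thus $\ST$ cannot imply $\PtCA$ over $\RCA$ either, which is the statement of the corollary.

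There is no genuine obstacle here beyond the bookkeeping of prenexing $\ST$ into $\Pi^1_3$ form and invoking the two prior results; the corollary is by design a clean application of \Cref{doesnotimply} together with \Cref{theorem.steffens-in-Pi12-CA+}. The only subtlety worth flagging is that if one tried to push the argument to also rule out $\ST \Rightarrow \PtCA^+$ or stronger, one would need a different method, since $\PtCA^+$ is no longer $\Pi^1_2$ and \Cref{doesnotimply} only controls implications of $\Pi^1_k$-$\mathsf{CA}_0$.
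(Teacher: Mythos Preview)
Your proposal is correct and follows essentially the same approach as the paper: verify that $\ST$ is $\Pi^1_3$, note its consistency with $\ATR$ via \Cref{theorem.steffens-in-Pi12-CA+}, and apply \Cref{doesnotimply} with $k=2$. The only addition is that you make explicit the passage from non-implication over $\ATR$ to non-implication over $\RCA$, which the paper leaves implicit.
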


\subsection{A proof of \texorpdfstring{$\Maximality$}{MM}}

\label{sect.ProofOfMAX}

We now turn our attention to the strongest version of $\ST$ studied in this paper, the statement that \emph{every} graph has a matching of maximal support.
To prove this, we will need another lemma of Steffens.
\begin{lem}[Lemma 5 of Steffens \cite{steffens}] \label{lemma-s45}
The following is provable in $\ACA$.   A graph $G$ satisfies condition~(A) iff for every independent subgraph $G'$ of $G$ and for every vertex $s \in V(G) \setminus V(G')$, there exists a vertex $v \in V(G) \setminus V(G')$ such that $\{s, v\} \in E(G)$.
\end{lem}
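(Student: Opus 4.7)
The plan is to prove each direction of the biconditional separately. The forward direction goes through in $\RCA$, while the reverse direction requires arithmetical comprehension to assemble a certain subgraph out of alternating-reachability data.

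For the forward direction, I would assume condition~(A), fix an independent subgraph $G'$ of $G$, and take any $s \in V(G) \setminus V(G')$. By \Cref{lemmaS2}, $G'$ admits an independent perfect matching $M'$; since $s \notin V(M') = V(G')$, condition~(A) yields an $M'$-augmenting path $P$ from $s$ with first edge $\{s, v_1\}$. I would then verify that $v_1 \notin V(G')$: otherwise $v_1 \in V(M')$, and the $M'$-alternating structure forces $P$ to continue along an $M'$-edge at $v_1$, making $P$ a proper $M'$-augmenting path and contradicting the independence of $M'$. All of this verification is $\Delta^0_1$ in the data, so this direction can be carried out in $\RCA$.

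For the reverse direction, I would argue by contrapositive: assuming some matching $M$ and vertex $s \in V(G) \setminus V(M)$ admit no $M$-augmenting path from $s$, I construct an independent subgraph $G'$ with $s \notin V(G')$ and every neighbor of $s$ in $V(G')$, violating the hypothesis. The construction I have in mind is based on alternating reachability from $s$:
\[
V(G') := \{v \in V(G) : v \neq s \text{ and some } M\text{-alternating path joins } s \text{ to } v\}, \qquad E(G') := M \cap \binom{V(G')}{2}.
\]
Since alternating reachability is $\Sigma^0_1$, arithmetical comprehension is exactly what is needed to form $V(G')$, placing the argument in $\ACA$. I would then verify (i) $V(G') \subseteq V(M)$, because any witnessing alternating path ending outside $V(M)$ would itself be $M$-augmenting from $s$; (ii) $E(G')$ is a perfect matching of $G'$, since the $M$-partner of any $v \in V(G')$ is also alternating-reachable, by appending or truncating the witnessing path and using matching-partner uniqueness to rule out the degenerate repetition cases; and (iii) every neighbor $u$ of $s$ lies in $V(G')$ via the length-one alternating path $(s, u)$, because $s \notin V(M)$ forces $\{s, u\} \notin M$.

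The main obstacle will be proving that $G'$ is independent, which by \Cref{lemmaS2} reduces to showing that $M' := E(G')$ is an independent matching. Suppose for contradiction there is a proper $M'$-augmenting path $Q = (q_0, q_1, \ldots)$ with $q_0 \notin V(G')$. The key observation will be that once $Q$ enters $V(G')$ it must be genuinely $M$-alternating, because matching-partner uniqueness together with the no-repetition property of paths prevents any ``non-$M'$'' edge of $Q$ from secretly being an $M$-edge. I would then argue, using the alternating parity together with the fact that $M'$-edges must have both endpoints in $V(G')$, that the first index $j$ with $q_j \in V(G')$ must be $j = 1$. At that point I would splice a witnessing alternating path from $s$ to $q_1$ with the tail of $Q$: in the case $q_0 = s$ this tail directly gives an $M$-augmenting path from $s$, and in the case $q_0 \neq s$ the spliced path either extends to an $M$-augmenting path from $s$ or forces $q_0$ itself to be alternating-reachable from $s$, contradicting $q_0 \notin V(G')$. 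Since the entire construction and verification is arithmetical in $G$ and $M$, the argument formalizes in $\ACA$.
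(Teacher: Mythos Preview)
The paper does not actually prove this lemma: it simply asserts that Steffens' own proofs of his Lemmas~4 and~5 go through in $\ACA$ and refers the reader to \cite{steffens}. Your proposal instead supplies a direct, self-contained argument via the standard alternating-reachability construction, which is almost certainly what underlies Steffens' proof as well; the advantage of your write-up is that it makes explicit where arithmetical comprehension is used (forming the $\Sigma^0_1$ set $V(G')$).

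Your forward direction and the construction of $G'$ are correct, as are the verifications that $M':=M\upharpoonright V(G')$ perfectly matches $V(G')$ and that every neighbor of $s$ lies in $V(G')$. The one place your sketch is genuinely thin is exactly where you flag it: the splicing step in the independence proof. Saying you ``splice a witnessing alternating path from $s$ to $q_1$ with the tail of $Q$'' glosses over a parity obstruction. What actually works is a first-intersection argument: take an $M$-alternating path $R=(s=r_0,\ldots,r_m=q_1)$, let $j$ be least with $r_j\in V(Q)$, say $r_j=q_k$ (necessarily $k\ge 1$ since $q_0\notin R$). If $\{r_{j-1},r_j\}$ and $\{q_{k-1},q_k\}$ have opposite $M$-membership, walk $Q$ backward to $q_0$ and conclude $q_0\in V(G')$. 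If both are in $M$, uniqueness of $M$-partners forces $r_{j-1}=q_{k-1}\in V(Q)$, contradicting minimality of $j$. If both are outside $M$, walk $Q$ \emph{forward} instead (the next edge $\{q_k,q_{k+1}\}$ is then in $M$), obtaining either an infinite $M$-augmenting path from $s$ or an $M$-alternating path from $s$ to the terminal vertex of $Q$, which lies outside $V(G')$---either way a contradiction. With this case analysis in hand your argument is complete and formalizes in $\ACA$ as claimed.
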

      
\begin{proof}
It is straightforward to check that the proofs of Steffens' Lemmas~4 and 5 from \cite{steffens}, the latter relying on the former, both hold in $\ACA$.
\end{proof}

\begin{theorem}
\label{PMplusMIMimpliesMAX}
$\MIM + \ST$ implies $\Maximality$ over $\RCA$.
\end{theorem}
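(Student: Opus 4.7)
The plan is to apply $\MIM$ to obtain a maximal independent matching $M$ of $G$, and then to use $\ST$ to extend $M$ to a matching of maximum support. Writing $V^{*} = V(G) \setminus V(M)$ and $G^{*} = G[V^{*}]$, my starting observation is that a second application of $\MIM$, this time to $G^{*}$, yields a maximal independent matching $K$ of $G^{*}$; combining it with $M$ via Lemma~\ref{lemma.indepremoveindep}(2) makes $M \cup K$ independent in $G$, and the maximality of $M$ forces $K = \emptyset$. By Lemma~\ref{lemmaS2}, $G^{*}$ therefore has no non-empty independent subgraph.

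Next, using arithmetical comprehension (which is available since $\MIM$ implies $\PCA$ and hence $\ACA$), I would define $V^{\dagger}$ to be the set of vertices that are not isolated in $G^{*}$ and set $G^{\dagger} = G[V^{\dagger}]$. Because $G^{\dagger}$ and $G^{*}$ have the same edge set, $G^{\dagger}$ also has no non-empty independent subgraph. Applying Lemma~\ref{lemma-s45} to $G^{\dagger}$ then reduces condition~(A) to the requirement that every vertex of $V^{\dagger}$ has a neighbor in $V^{\dagger}$, which holds because any $V^{*}$-neighbor of a non-isolated vertex of $G^{*}$ is itself non-isolated. So $G^{\dagger}$ satisfies condition~(A), and $\ST$ produces a perfect matching $N'$ of $G^{\dagger}$; I would then set $\tilde{M} = M \cup N'$, a matching of $G$ with $V(\tilde{M}) = V(M) \cup V^{\dagger}$.

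Finally, I would verify by contradiction that $V(\tilde{M})$ is maximal. Suppose $P$ is a matching of $G$ with $V(P) \supsetneq V(\tilde{M})$, and pick $v \in V(P) \setminus V(\tilde{M})$, so that $v$ is isolated in $G^{*}$. Since $V(M) \subseteq V(\tilde{M}) \subseteq V(P)$, the set $V(M) \setminus V(P)$ is empty, so the component of $v$ in $M \mathbin{\Delta} P$ cannot terminate in an $M$-edge; it is therefore an $M$-augmenting path starting at $v$, either infinite or of odd length ending in a $P$-edge at a vertex of $V^{*}$. Were this path proper, it would contradict the independence of $M$, so it must have length one, giving $v$ a neighbor in $V^{*}$ and contradicting the isolation of $v$ in $G^{*}$. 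The main subtlety I expect to handle carefully is verifying that $G^{\dagger}$- and $G^{*}$-independence of matchings living inside $G^{\dagger}$ coincide, which is what lets the ``no non-empty independent subgraph of $G^{*}$'' observation transfer to $G^{\dagger}$ in the second paragraph; this works because isolated vertices of $G^{*}$ participate in no edges of either graph.
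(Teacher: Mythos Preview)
Your proof is correct and follows essentially the same route as the paper: apply $\MIM$, remove the isolated vertices from $G\setminus V(M)$, argue that the remaining graph has no nonempty independent subgraphs and hence satisfies condition~(A) via Lemma~\ref{lemma-s45}, apply $\ST$, and check maximality. Two minor differences worth noting: first, your second invocation of $\MIM$ on $G^{*}$ is unnecessary---the paper argues directly that any nonempty independent subgraph of $G^{*}$ would combine with $M$ (via Lemma~\ref{lemma.indepremoveindep}(2)) to contradict the maximality of $M$; second, your maximality argument via the component of $v$ in $M\mathbin{\Delta}P$ is essentially an inlined version of Lemma~\ref{lemmaS1}, which the paper cites instead.
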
  
\begin{proof}  
  Fix a countable graph $G$.  Applying $\MIM$, 
  we obtain a maximal independent subgraph $I \subseteq G$ and a corresponding independent matching $M$.

  If $I = G$, then we have a perfect (hence maximal) matching and we are done.  So suppose $I \neq G$, and let $N \subseteq V(G) \setminus V(I)$ be the set of vertices with no neighbors in $G \setminus I$.  (The vertices in $N$ are not necessarily isolated in $G$ as they may have neighbors in $I$ itself.)  Consider the subgraph $H = V(G) \setminus (V(I) \cup N)$.  We claim that $H$ does not have any nonempty independent subgraphs.  Indeed, since any nonempty independent subgraph $G'$ of $H$ is also a nonempty independent subgraph of $G \setminus I$, $G'$ could therefore be combined with $I$ to form a new, larger independent subgraph, by \Cref{lemma.disjointunionindependent} part 2, which would contradict the maximality of $I$.  Therefore the only independent subgraph of  $H$ is the empty subgraph, and clearly each vertex in $H$ has a neighbor in $H$, so $H$ satisfies condition~(A) by \Cref{lemma-s45}.

  Applying  $\ST$, we obtain a perfect matching $\hat M$ of $H$.  First, note that $M \cup \hat M$ is a matching because $V(M)$ and $V(\hat M)$ are disjoint sets of vertices.  We wish to show that $M \cup \hat M$ is a matching of $G$ of maximal support.  The only vertices not covered by it are vertices in $N$.  And notice, neighbors of vertices in $N$ can only be in $I$, so no matching that covers $I = V(M)$ can cover any vertex in $N$, by \Cref{lemmaS1}.
Thus, $M \cup \hat M$ must be a maximal matching of $G$.
\end{proof}
 
Theorem \ref{PMplusMIMimpliesMAX} is important for two reasons.  
First, it allows us to use the upper bounds for $\ST$ and $\MIM$ to obtain an upper bound for $\Maximality$. 
Second, and more surprisingly, the fact that $\Maximality$ has lower sentence complexity than $\MIM$ will allow us to separate both principles from $\PtCA$. 

\begin{corollary} \label{proofofMAX}
$\PtCA^+$ proves $\Maximality$, and $\Maximality$ does not imply $\PtCA$ over $\RCA$.
\end{corollary}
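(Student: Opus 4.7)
The corollary decomposes into two independent assertions, each reducible to results already established in the excerpt.

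For the first claim, $\PtCA^+ \vdash \Maximality$, the plan is simply to chain together three earlier results. Since $\PtCA^+$ extends $\PtCA$, it proves $\MIM$ by Theorem~\ref{lemma.SteffensLemma3.p12ca}; it proves $\ST$ directly by Theorem~\ref{theorem.steffens-in-Pi12-CA+}; and then Theorem~\ref{PMplusMIMimpliesMAX}, which shows that $\MIM + \ST$ implies $\Maximality$ over $\RCA$, yields the conclusion. No further graph-theoretic content is needed here beyond citing these three results.

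For the non-implication, the plan is to apply Proposition~\ref{doesnotimply} with $k = 2$. This requires two observations. First, that $\Maximality$ is (equivalent to) a $\Pi^1_3$ sentence: writing out the definition, the matrix ``$M$ is a matching and no matching $N$ satisfies $V(N) \supsetneq V(M)$'' has the form (arithmetical) $\wedge \, \forall N\,(\text{arithmetical})$, which is $\Pi^1_1$, so the full statement $\forall G\, \exists M\,(\Pi^1_1)$ prenexes to $\Pi^1_3$. Second, that $\Maximality$ is consistent with $\ATR$: this is free from the first half of the corollary, since $\PtCA^+$ proves $\Maximality$ and is a consistent extension of $\ATR$. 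Proposition~\ref{doesnotimply} then immediately gives that $\Maximality$ does not imply $\PtCA$ over $\ATR$, and hence not over $\RCA$ either (any $\RCA$-provable implication would a fortiori be provable over $\ATR$).

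The first part is pure bookkeeping. The only mild subtlety is the complexity count in the second part---verifying that the ``no larger support'' clause is genuinely $\Pi^1_1$ rather than something more complex. Once that observation is pinned down, Proposition~\ref{doesnotimply} yields the separation mechanically, so the hard work has all been done earlier in the paper, in the proofs of Theorems~\ref{lemma.SteffensLemma3.p12ca}, \ref{theorem.steffens-in-Pi12-CA+}, and \ref{PMplusMIMimpliesMAX}, and in the metamathematical argument for Proposition~\ref{doesnotimply}.
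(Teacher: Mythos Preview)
Your proposal is correct and matches the paper's proof essentially line for line: the same three citations for the first claim, and the same $\Pi^1_3$ complexity count plus Proposition~\ref{doesnotimply} for the non-implication. The only cosmetic difference is that the paper justifies consistency with $\ATR$ by appealing to the truth of $\Maximality$, whereas you derive it from the first half via provability in $\PtCA^+$; both are fine.
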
 

\begin{proof}
$\PtCA^+$ proves $\ST$ by \Cref{theorem.steffens-in-Pi12-CA+}, and $\PtCA$ proves $\MIM$ by \Cref{lemma.SteffensLemma3.p12ca}.  Thus, $\PtCA^+$ proves $\Maximality$ by \Cref{PMplusMIMimpliesMAX}.

Note that $\Maximality$ is the following $\Pi^1_3$ sentence: for each $G$, there is an $M$ such that for any other $M'$, if $M$ and $M'$ are both matchings, then $V(M)$ is not strictly contained in $V(M')$. Therefore, since $\Maximality$ is true and thus consistent with $\ATR$, $\Maximality$ does not imply $\PtCA$ by \Cref{doesnotimply}.
\end{proof}

Because maximal matchings are independent, the upper bound on $\Maximality$ also applies to $\MIM$. 

\begin{proposition} \label{cor.MAXimpliesMIM}
$\Maximality$ implies $\MIM$ over $\RCA$.
\end{proposition}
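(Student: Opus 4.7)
The plan is direct. Given a graph $G$, apply $\Maximality$ to obtain a matching $M$ whose support is not properly contained in the support of any other matching, and then observe that this $M$ is already a maximal independent matching.

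The substantive step is to verify that any such $M$ is independent. Suppose for a contradiction that a proper $M$-augmenting path $P$ exists, starting at some $s \notin V(M)$. Then $M \mathbin{\Delta} P$ is a matching of $G$ whose support strictly extends $V(M)$: if $P$ is finite and terminates in a vertex $v \notin V(M)$, one has $V(M \mathbin{\Delta} P) = V(M) \cup \{s, v\}$; and if $P$ is infinite, then every vertex of $P$ other than $s$ already lies in $V(M)$ (each such vertex is incident, within $P$, to an edge of $M$, by alternation), giving $V(M \mathbin{\Delta} P) = V(M) \cup \{s\}$. Either way we contradict the $\subseteq$-maximality of $V(M)$, so $M$ must be independent. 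The maximal-independence property is then automatic, since any independent matching $M'$ with $V(M') \supsetneq V(M)$ would in particular be a matching whose support properly extends $V(M)$, again contradicting $\Maximality$.

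I expect no real obstacle. Forming $M \mathbin{\Delta} P$ and verifying the matching property are $\Delta^0_1$ in $M$ and $P$, as already recorded in the proofs of \Cref{prop-perfect-to-A,prop.MAXimpliesPM}, so the whole argument formalizes in $\RCA$. The content of the proposition is the simple observation that $\subseteq$-maximality of support among \emph{all} matchings is already strictly stronger than maximal independence, because every proper augmenting path produces a matching of strictly larger support.
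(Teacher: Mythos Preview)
Your proof is correct and follows essentially the same approach as the paper: show that a matching $M$ of maximal support must be independent (since any proper $M$-augmenting path $P$ yields a matching $M \mathbin{\Delta} P$ of strictly larger support), and then observe that maximality among all matchings immediately gives maximality among independent matchings. You simply spell out the finite/infinite case analysis for $V(M \mathbin{\Delta} P)$ that the paper leaves implicit.
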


\begin{proof}
Any maximal matching $M$ of a graph $G$ must be independent.  To see why, consider any $M$-augmenting path $P$.  If $P$ were a proper augmenting path, then $M \mathbin{\Delta} P$ would be a matching with strictly greater support than $M$, which contradicts the fact that $M$ is a maximal matching. 
Because every independent matching is also a matching, the existence of matchings of maximal support implies the existence of maximal independent matchings.
\end{proof}

\begin{corollary}
$\MIM$ does not imply $\PtCA$ over $\RCA$.
\end{corollary}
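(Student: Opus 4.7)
The plan is to derive this as an immediate consequence of the two preceding results. By Proposition \ref{cor.MAXimpliesMIM}, $\Maximality$ implies $\MIM$ over $\RCA$. If $\MIM$ were to imply $\PtCA$ over $\RCA$, transitivity of implication would yield $\Maximality \to \PtCA$ over $\RCA$, directly contradicting Corollary \ref{proofofMAX}. So the entire proof is a short contrapositive argument, with all the real work having been done in the preceding corollary.

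One might wonder why we cannot simply apply Proposition \ref{doesnotimply} to $\MIM$ directly, as was done for $\Maximality$. The reason, and the only subtlety worth noting, is that independence is itself a $\Pi^1_1$ property of a matching, which pushes the natural formalization of $\MIM$ to a $\Pi^1_4$ sentence, one quantifier level beyond the $\Pi^1_3$ hypothesis required by Proposition \ref{doesnotimply} in the case of $\PtCA$. Routing the argument through $\Maximality$ sidesteps this obstacle by exploiting the lower syntactic complexity of the maximal matching principle, which is $\Pi^1_3$ because its inner clause involves only the arithmetical matching property, together with the fact that $\Maximality$ is still strong enough to prove $\MIM$. There is no other obstacle: the transitivity step is immediate, and both ingredients (Proposition \ref{cor.MAXimpliesMIM} and Corollary \ref{proofofMAX}) are already in hand.
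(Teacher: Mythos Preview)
Your proof is correct and matches the paper's argument exactly: both use that $\Maximality$ implies $\MIM$ (Proposition~\ref{cor.MAXimpliesMIM}) together with the fact that $\Maximality$ does not imply $\PtCA$ (Corollary~\ref{proofofMAX}) to conclude by transitivity. Your additional remark explaining why Proposition~\ref{doesnotimply} cannot be applied directly to $\MIM$ (owing to its $\Pi^1_4$ complexity) is a nice clarification that the paper alludes to but does not spell out in the proof itself.
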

\begin{proof}
By Proposition \ref{cor.MAXimpliesMIM}, $\Maximality$ implies $\MIM$ over $\RCA$.  Since $\Maximality$ does not imply $\PtCA$,  it follows that $\MIM$ cannot imply $\PtCA$. 
\end{proof}

It is not clear whether any of $\Maximality$, $\MIM$, or $\ST$ are equivalent over $\RCA$. 
To shed light on this, note that 
the proof of $\Maximality$ required \emph{both} $\ST$ \emph{and} $\MIM$.  However, only a very special case of $\ST$ is used in the proof.  It is natural to ask {exactly} how much of  $\ST$ is actually used in the proof of $\Maximality$.
 
\begin{statement} \label{defn.star}
Let $(\star)$ denote the statement ``if $G$ satisfies condition~(A) and $G$ has no nontrivial independent subgraphs, then $G$ is empty.''
\end{statement}

Note that $(\star)$ is exactly the statement that the first maximal independent matching obtained in the proof of $\ST$ is a perfect matching of the graph, since it asserts that $G \setminus V(M) = \emptyset$. 
This tells us immediately that $\MIM + (\star)$ implies $\ST$.

Similarly, in the proof of \Cref{PMplusMIMimpliesMAX}, $(\star)$ can be used to prove that the maximal independent matching is itself maximal.  
To see why, recall that the subgraph $H$ from the proof of \Cref{PMplusMIMimpliesMAX} has no nonempty independent subgraphs and has no isolated vertices, so satisfies condition~(A) by \Cref{lemma-s45}.
By $(\star)$, $H$ is the empty subgraph so its perfect matching $\hat M$ exists (and is empty), as needed in the proof  of \Cref{PMplusMIMimpliesMAX}.  
This yields the following corollary. 

\begin{corollary}
   $\MIM + (\star)$ implies $\Maximality$ over $\RCA$.
\end{corollary}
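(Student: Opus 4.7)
The plan is to closely follow the proof of \Cref{PMplusMIMimpliesMAX}, substituting $(\star)$ for the use of $\ST$ that appears at the end of that argument. Given a countable graph $G$, I would first apply $\MIM$ to obtain a maximal independent subgraph $I$ with associated independent matching $M$. If $I = G$, then $M$ is already perfect and there is nothing more to prove, so assume $I \neq G$.

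Next I would reproduce the graph-theoretic setup from \Cref{PMplusMIMimpliesMAX}: let $N \subseteq V(G) \setminus V(I)$ be the set of vertices with no neighbors in $G \setminus I$, and let $H$ be the subgraph induced on $V(G) \setminus (V(I) \cup N)$. The key structural facts about $H$, proved inside $\RCA$ exactly as in \Cref{PMplusMIMimpliesMAX}, are: (i) $H$ has no nonempty independent subgraph, since combining any such subgraph with $I$ using \Cref{lemma.disjointunionindependent}(2) would contradict the maximality of $I$; and (ii) every vertex of $H$ has a neighbor in $H$, so by \Cref{lemma-s45} the subgraph $H$ satisfies condition~(A).

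At this point, instead of invoking $\ST$ to obtain a perfect matching of $H$, I would invoke $(\star)$: since $H$ satisfies condition~(A) and has no nonempty independent subgraphs, $H$ must be empty. Consequently $V(G) = V(I) \cup N = V(M) \cup N$.

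It remains to verify that $M$ itself is a matching of maximal support in $G$. Suppose toward a contradiction that there were a matching $M'$ with $V(M') \supsetneq V(M)$. Then $M'$ would cover some vertex $v \in N$. But every neighbor of $v$ lies in $V(I) = V(M)$, so $M'$ contains an edge $\{s,v\}$ with $s \in V(I)$, where the endpoint $s$ is matched by $M'$ to $v \notin V(I)$. Taking $M'$ as the ambient perfect matching of $V(M')$ and $I$ as the independent subgraph, this contradicts \Cref{lemmaS1}. I do not anticipate any real obstacle here: the whole argument is essentially the proof of \Cref{PMplusMIMimpliesMAX} with $(\star)$ short-circuiting the final appeal to $\ST$, and the discussion immediately preceding the corollary already indicates this reduction.
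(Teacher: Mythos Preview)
Your proposal is correct and follows essentially the same approach as the paper: the discussion preceding the corollary already outlines exactly this argument, namely running the proof of \Cref{PMplusMIMimpliesMAX} and using $(\star)$ in place of $\ST$ to conclude that $H$ is empty, so that $\hat M = \emptyset$ and $M$ itself is the maximal matching. Your write-up simply spells out the final maximality verification via \Cref{lemmaS1} a bit more explicitly than the paper does.
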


In fact, $(\star)$ is a special case of $\ST$.
 
\begin{proposition}
  $\ST$ implies $(\star)$ over $\RCA$.   
\end{proposition}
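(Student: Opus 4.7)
The plan is to prove the contrapositive-style statement directly: assume $G$ satisfies condition~(A), apply $\ST$ to obtain a perfect matching $M$ of $G$, and then argue that if $G$ were nonempty, then $G$ itself would be a nontrivial independent subgraph of $G$, contradicting the hypothesis of $(\star)$.

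The crux of the argument is the observation that a perfect matching is vacuously independent. Indeed, by the definition of independent matching, $M$ is independent precisely when there is no proper $M$-augmenting path starting at some $s \in V(G) \setminus V(M)$. If $M$ is perfect, then $V(G) \setminus V(M) = \emptyset$, so there is no such starting vertex at all, and the condition holds trivially. This is the only real step — everything else is definitional unpacking.

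Given this, the proof proceeds as follows. Suppose $G$ satisfies condition~(A) and has no nontrivial independent subgraphs, and suppose for contradiction that $G$ is nonempty. By $\ST$, pick a perfect matching $M$ of $G$. By the observation above, $M$ is an independent matching, so $G$ (viewed as a subgraph of itself) has an independent perfect matching. By \Cref{lemmaS2}, $G$ is an independent subgraph of itself, and since $G$ is nonempty it is a nontrivial independent subgraph, a contradiction. Hence $G$ is empty.

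No obstacle is really expected: the only subtlety is recognizing the vacuous independence of perfect matchings, and the appeal to \Cref{lemmaS2} is formalized in $\RCA$ as already noted in the excerpt. The whole argument is thus carried out in $\RCA$ together with the single hypothesis $\ST$.
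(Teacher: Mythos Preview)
Your proof is correct and follows essentially the same route as the paper's: apply $\ST$ to obtain a perfect matching $M$, observe that $M$ is independent (the paper phrases this as ``$M$ is maximal, hence independent'' via \Cref{cor.MAXimpliesMIM}, while you note the vacuous independence directly), and conclude that $V(M)=V(G)$ is an independent subgraph, forcing $G$ to be empty. The only cosmetic difference is that you invoke \Cref{lemmaS2} explicitly to pass from an independent perfect matching to an independent subgraph, whereas the paper leaves this implicit.
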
 
\begin{proof} 
  Let $G$ be a graph which satisfies condition~(A) and has no nonempty independent subgraphs.  By $\ST$, $G$ has a perfect matching $M$.  In other words, $V(M)=V(G)$.  In addition, note that $M$ is maximal, so $M$ is an independent matching by the proof of \Cref{cor.MAXimpliesMIM}, and so $V(M)$ is an independent subgraph of $G$.  By assumption, $G$ has no nonempty independent matchings, and so we must have $V(M)=\emptyset$.  
Because $V(M)=V(G)$,  $G$ must be empty.
\end{proof}

On the face of it, $(\star)$ is a straightforward, true principle.  Thus, it is reasonable to conjecture that it has a proof in $\PCA$. We will show in Theorem \ref{thm.MIM.implies.PCA} that $\MIM$ implies $\PCA$, so such a proof of $(\star)$ would imply the equivalence of $\MIM$ and $\Maximality$. 
In addition, if $(\star)$ is provable in $\PCA$, then it may be possible to prove $\ST$ by iterating a contrapositive version of $(\star)$ transfinitely many times. 
  
\begin{conjecture}
$\Maximality$ and $\MIM$ are equivalent over $\RCA$, 
$\PTR$ proves $\ST$,
and $\PtCA$ proves $\Maximality$.
\end{conjecture}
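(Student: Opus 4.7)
The plan hinges on proving $(\star)$ in $\PCA$, since this single lemma unlocks both the equivalence $\Maximality \equiv \MIM$ and the upper bound $\PtCA \vdash \Maximality$, and sets the stage for the transfinite argument establishing $\PTR \vdash \ST$.

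To prove $(\star)$ in $\PCA$, I would employ a countable coded $\beta$-model $\M$ containing the graph $G$, analogous to the proof of \Cref{lemma.SteffensLemma3.p12ca} but one level lower in the analytical hierarchy. Given $G$ satisfying condition~(A) with no nonempty independent subgraph, the aim is to derive $V(G) = \emptyset$. The key idea is to exploit the hypothesis in order to replace the $\Pi^1_1$ independence checks with something more tractable: if no nonempty subgraph of $G$ is independent, then every nonempty subgraph witnesses the existence of a proper augmenting path, which is a $\Sigma^1_1$ property that $\beta$-reflects between $\M$ and the outside world. Combined with condition~(A), this should enable a contradiction with $V(G) \neq \emptyset$ by producing either an uncovered vertex or an impossible alternating structure. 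The main obstacle is that the $\Pi^1_1$ character of independence does not obviously cooperate with mere $\Sigma^1_1$-absoluteness, so one may need to encode finer combinatorial structure --- for example, a well-founded tree of failed independence witnesses --- to convert the problem into one genuinely amenable to $\PCA$.

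Granting $(\star)$ in $\PCA$, the equivalence $\Maximality \equiv \MIM$ over $\RCA$ follows immediately: \Cref{cor.MAXimpliesMIM} gives the forward direction, while $\MIM$ implies $\PCA$ by \Cref{thm.MIM.implies.PCA} and hence proves $(\star)$; combining this with the corollary that $\MIM + (\star)$ implies $\Maximality$ closes the loop. For the third clause of the conjecture, \Cref{lemma.SteffensLemma3.p12ca} gives $\PtCA \vdash \MIM$, so $\PtCA$ proves $\Maximality$ through exactly the same chain.

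For the claim $\PTR \vdash \ST$, the plan is to iterate the contrapositive of $(\star)$ along a recursive well-ordering using $\Pi^1_1$-transfinite recursion. Given $G$ satisfying condition~(A), define $G_0 = G$ and, at successor stages, $G_{\alpha+1} = G_\alpha \setminus V(I_\alpha)$, where $I_\alpha$ is a canonically chosen (say, lexicographically least) nonempty independent subgraph of $G_\alpha$. Such an $I_\alpha$ exists whenever $G_\alpha$ is nonempty by the contrapositive of $(\star)$, since $G_\alpha$ inherits condition~(A) by \Cref{lemma.removing-indep-(A)}. Take unions at limits. Because the $V(I_\alpha)$ are pairwise disjoint and $V(G)$ is countable, the recursion must stabilize at some countable ordinal $\alpha^*$ with $G_{\alpha^*} = \emptyset$, and the union of the matchings witnessing the independence of each $I_\alpha$ is then a perfect matching of $G$. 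The chief obstacle, beyond $(\star)$ itself, is that the selection of $I_\alpha$ involves checking the $\Pi^1_1$ predicate of independence; verifying that the construction fits the format of $\Pi^1_1$-transfinite recursion --- and in particular that the indexing well-ordering can be built within $\PTR$ --- will require a careful choice of canonical witnesses, possibly with an auxiliary appeal to $\ATR$ for the well-ordering.
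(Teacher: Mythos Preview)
The statement you are attempting to prove is a \emph{conjecture} in the paper, not a theorem; the paper offers no proof. In fact, the surrounding discussion makes exactly the heuristic case you outline: the authors observe that if $(\star)$ were provable in $\PCA$, then $\MIM$ and $\Maximality$ would be equivalent and $\PtCA$ would prove $\Maximality$, and they remark that one might then hope to prove $\ST$ by iterating a contrapositive version of $(\star)$ transfinitely. Your proposal is essentially a restatement of this motivation, not a proof that goes beyond it.

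The genuine gap is that you do not actually prove $(\star)$ in $\PCA$. Your $\beta$-model sketch does not work as stated: the hypothesis ``$G$ has no nonempty independent subgraph'' is $\Pi^1_2$ (by \Cref{lemmaS2}, independence of a subgraph is equivalent to the existence of an independent perfect matching, a $\Sigma^1_2$ condition), and condition~(A) is likewise $\Pi^1_2$. Neither of these transfers into a countable coded $\beta$-model, so placing $G$ in such a model does not let you assume the hypotheses hold there. Your suggestion to ``replace the $\Pi^1_1$ independence checks with something more tractable'' is not carried out, and the paper explicitly notes that ``no direct proof of $(\star)$ is known to the authors, other than its proof from $\ST$,'' and poses the exact strength of $(\star)$ as an open question. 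Until $(\star)$ is established in $\PCA$ (or some other route is found), the three parts of the conjecture remain open; your conditional deductions from $(\star)$ are correct but were already present in the paper's discussion.
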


Although apparently simple, $(\star)$ is surprisingly difficult to either prove or to code into.  
No direct proof of $(\star)$ is known to the authors, other than its proof from $\ST$.
\begin{question} 
What is the exact strength of $(\star)$?
Does $(\star)$ imply $\ST$, or can $(\star)$ be separated  from $\ST$ using a forcing construction?
\end{question}

Recall that $\MIM+(\star)$ implies $\ST$.  Thus even a proof of $(\star)$ in $\PtCA$ would yield an improved upper bound on $\ST$ and hence on $\Maximality$.

\section{Lower bounds for matchings in general}
\label{sect.lower-bounds-general}

In this section, we establish lower bounds on the complexity of the principles whose upper bounds were given above.  

In many of these reversals, we uniformly convert a tree $T$ into what we will call its \textit{doubling tree} $\hat T$.  The idea is that each vertex $v$ in $T$ \emph{except} the root gets replaced by two vertices, connected by an edge (we call this a \textit{doubling edge}).  The bottom vertex of that edge is a child of the parent of $v$; the top vertex of the edge is the parent of every child of $v$.  Note that Aharoni, Magidor, and Shore \cite{aharoni_magidor} also construct this type of tree (in their Theorem~4.13).
Clearly $T$ has an infinite path if and only if $\hat T$ has an infinite path.  
More importantly, there is a correspondence between paths in $T$, matchings of $\hat{T}$, and condition~(A).

\begin{lem} \label{pathIffMatch}
  Let $T \subseteq \N^{<\N}$ be a tree, and let $\hat T$ be its doubling tree.  Then the following are equivalent over $\RCA$:
  \begin{enumerate}
    \item $T$ has an infinite path. 
    \item $\hat T$ has a perfect matching. 
    \item $\hat T$ satisfies condition~(A).
   \end{enumerate}
\end{lem}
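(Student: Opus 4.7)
The plan is to prove the cycle $(1) \Rightarrow (2) \Rightarrow (3) \Rightarrow (1)$. The implication $(2) \Rightarrow (3)$ is immediate from \Cref{prop-perfect-to-A}, so only the other two directions require work.

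For $(1) \Rightarrow (2)$, suppose $r = v_0, v_1, v_2, \ldots$ is an infinite path through $T$. I would define a perfect matching of $\hat T$ consisting of the connecting edges $\{r, v_1^-\}$ and $\{v_i^+, v_{i+1}^-\}$ for $i \geq 1$ along the ``spine'' of $\hat T$ corresponding to the path, together with the doubling edge $\{w^-, w^+\}$ for every non-root vertex $w$ of $T$ lying off the path. A short case check (splitting on whether a vertex of $\hat T$ is $r$, is $v_i^-$ or $v_i^+$ for some $i$, or is $w^\pm$ for some off-path $w$) shows that each vertex is covered exactly once. The resulting set of edges is $\Delta^0_1$ in $T$ and the given path, so the construction formalizes in $\RCA$.

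The main step is $(3) \Rightarrow (1)$. The strategy is to choose a matching $M$ of $\hat T$ so rigidly structured that any $M$-augmenting path is forced to trace out an infinite descending branch of $T$. Let $M$ be the set of all doubling edges, $\{\{w^-, w^+\} : w \in T,\ w \neq r\}$; then $V(\hat T) \setminus V(M) = \{r\}$, so the root is the unique uncovered vertex. Applying condition~(A) at $r$ yields an $M$-augmenting path $P$ starting at $r$. Since $r$ is the only vertex outside $V(M)$ and paths are injective, $P$ cannot terminate anywhere, and so $P$ must be infinite.

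To extract an infinite path through $T$ from $P$, I would trace its forced alternation. The first edge out of $r$ must be non-$M$, hence a connecting edge to some $v_1^-$ with $v_1$ a child of $r$ in $T$; the next must be an $M$-edge, so the doubling edge to $v_1^+$; the next must be non-$M$ again, and because $v_1^-$ has already been visited, the only option at $v_1^+$ is a connecting edge down to $v_2^-$ for some child $v_2$ of $v_1$. Inductively, $P = (r, v_1^-, v_1^+, v_2^-, v_2^+, \ldots)$ with each $v_{i+1}$ a child of $v_i$ in $T$, so $(r, v_1, v_2, \ldots)$ is an infinite path through $T$ which is computable from $P$, and the extraction goes through in $\RCA$.

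The main obstacle I expect is the careful verification at the intermediate step: that from each $v_i^+$ the alternation truly leaves no option other than a downward connecting edge, ruling out turn-arounds or jumps to another subtree. This relies crucially on the specific structure of $M$ (every doubling edge is in $M$, every connecting edge is not) together with path injectivity, and it is this rigidity which both forces the augmenting path to be infinite and guarantees that the extracted sequence is a genuine descending chain of $T$.
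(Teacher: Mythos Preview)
Your proposal is correct and follows essentially the same approach as the paper: the same cycle $(1)\Rightarrow(2)\Rightarrow(3)\Rightarrow(1)$, the same matching of connecting edges along the spine plus doubling edges off the path for $(1)\Rightarrow(2)$, and for $(3)\Rightarrow(1)$ the same choice of $M$ as the set of all doubling edges so that the root is the unique uncovered vertex and the resulting augmenting path is forced to be infinite. Your write-up simply spells out in more detail the alternation analysis that the paper summarizes in one sentence.
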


\begin{proof}
$(1 \rightarrow 2)$.  Suppose there is an infinite path $P$ through $T$.  We define a perfect matching $M$ of $\hat T$ as follows.  For every edge in $P$, put the corresponding edge from $\hat T$ into $M$.  $M$ is a matching since the corresponding path in $\hat T$ alternates between edges in $T$ and the added doubling edges.
 For each vertex not on $P$, include its doubling edge in $M$ (the root is on $P$, so every vertex not on $P$ is adjacent to exactly one doubling edge).  Note that no two doubling edges are adjacent in $\hat T$. Now every vertex in $\hat T$ is matched by $M$, making it a perfect matching.
 
 $(2 \rightarrow 3)$.  This holds in $\RCA$ for all graphs (\Cref{prop-perfect-to-A}).
 
 $(3 \rightarrow 1)$. Let $M$ be the matching consisting of exactly the doubling edges in $\hat T$.  This leaves precisely the root $r$ of $\hat T$ unmatched.  By condition~(A), there is an $M$-augmenting path starting at $r$.  Since no other vertex is unmatched, this path must be infinite, and corresponds to an infinite path back in $T$.
\end{proof}

\subsection{Lower bounds for \texorpdfstring{$\ST$}{PM} }

\begin{theorem} \label{StefToS11AC}
  $\ST$ implies $\SAC$ over $\RCA$.
\end{theorem}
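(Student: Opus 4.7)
The plan is to use the standard reformulation of $\SAC$: over $\RCA$, $\SAC$ is equivalent to the statement that for any sequence of trees $\langle T_n : n \in \N\rangle \subseteq \N^{<\N}$ such that each $T_n$ has an infinite path, there exists a sequence $\langle P_n : n\in\N\rangle$ with each $P_n$ an infinite path through $T_n$. So I would fix such a sequence $\langle T_n\rangle$ and use $\ST$ to produce the required sequence of paths in one stroke, leveraging the doubling-tree correspondence proved in \Cref{pathIffMatch}.

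The construction is: for each $n$, form the doubling tree $\hat T_n$ of $T_n$, uniformly in $n$. Then let $G$ be the disjoint union $\bigsqcup_n \hat T_n$ (with vertex sets reindexed so as to be disjoint). This $G$ is $\Delta^0_1$ in the given sequence, so exists in $\RCA$. By \Cref{pathIffMatch}, each $\hat T_n$ satisfies condition~(A), since each $T_n$ has an infinite path by hypothesis. I then need to verify that condition~(A) passes to the disjoint union: given any matching $M$ of $G$ and any unmatched vertex $s$, the vertex $s$ lies in some component $\hat T_n$, and $M\cap E(\hat T_n)$ is a matching of $\hat T_n$ that leaves $s$ uncovered; an $(M\cap E(\hat T_n))$-augmenting path in $\hat T_n$ is automatically an $M$-augmenting path in $G$ (since no edge of $G$ crosses components). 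Thus $G$ satisfies condition~(A).

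Applying $\ST$ gives a perfect matching $M$ of $G$. For each $n$, let $M_n = M\cap E(\hat T_n)$; this is $\Delta^0_1$ in $M$ and $n$, uniformly. Since every vertex of $\hat T_n$ is in $V(G)$ and hence covered by $M$, and since $M$-edges cannot cross components, each $M_n$ is a perfect matching of $\hat T_n$. By \Cref{pathIffMatch}, from $M_n$ one can $\Delta^0_1$-uniformly extract an infinite path $P_n$ through $T_n$: start at the root of $\hat T_n$ and repeatedly follow the unique non-doubling matched edge up (projecting back to $T_n$). The resulting sequence $\langle P_n\rangle$ is the required $\SAC$-witness.

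The main obstacles are minor bookkeeping: (i) verifying in $\RCA$ that condition~(A) is preserved by disjoint union, which is immediate once one notes that connected components are invariant under augmentation; (ii) checking that the extraction of $P_n$ from $M_n$ in \Cref{pathIffMatch} is uniform in $n$ and $\Delta^0_1$, so that the sequence $\langle P_n\rangle$ exists by $\Delta^0_1$-comprehension; and (iii) confirming that the reformulation of $\SAC$ as a sequence-of-trees statement is available in $\RCA$, which is standard (it amounts to the normal form for $\Sigma^1_1$ formulas, as in \cite{simpson}). No new graph-theoretic content beyond \Cref{pathIffMatch} is needed; the role of $\ST$ is precisely to pick simultaneously from infinitely many independent copies of the path-selection problem.
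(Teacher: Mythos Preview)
Your proposal is correct and follows essentially the same route as the paper: form the disjoint union of the doubling trees $\hat T_n$, verify condition~(A) component-wise via \Cref{pathIffMatch}, apply $\ST$, and read off the paths uniformly from the perfect matching. The only cosmetic difference is that the paper first invokes \Cref{acaIsFB} to secure $\ACA$ before arguing that condition~(A) passes to the disjoint union, whereas you argue this step directly in $\RCA$; either way the argument goes through.
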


\begin{proof}
In \Cref{acaIsFB}, we proved that the restriction of $\ST$ to locally finite graphs is equivalent to $\ACA$, so we may work over $\ACA$.  Let $\langle T_k : k\in \N\rangle$ be a sequence of trees $T_k \subseteq \N^{<\N}$ such that $\forall k \, [T_k \mbox{ has a path}]$.  Let $\la \hat T_k : k\in \N\ra$ be the associated sequence of doubling trees, and define $G = \bigsqcup_{k \in \mathbb{N}}{\hat T_k}$.  By Theorem~V.1.7$^\prime$ of \cite{simpson}, it suffices to show there exists a sequence $\langle g_k : k \in \mathbb{N} \rangle$ so that
  \[\forall k \, [g_k \text{ is a path through } T_k].\]

  We claim that $G$ satisfies condition~(A).  In the proof of \Cref{pathIffMatch}, we saw that $\hat T_i$ satisfies condition~(A) if and only if $ T_i$ has an infinite path.
  Since each $ T_i$ has an infinite path, $\hat T_i$ satisfies condition~(A) for each $i \in \N$.  It is easy to see, working in $\ACA$, that a disjoint union of graphs satisfying condition~(A) also satisfies condition~(A), and therefore $G$ must satisfy condition~(A).  This proves the claim.

  Then by $\ST$, there is a perfect matching $M$ of $G$.  
  This allows us to uniformly define an infinite path $P_k$ through the doubling tree $\hat T_k$ as follows.  Starting at the root, follow the matching up the tree.  The root vertex is matched by $M$ to exactly one child.  That vertex has only one child, via its doubling edge.  This child is matched by $M$ to exactly one of its children, and so on.  
  By restricting each $P_k$ to $T_k$, we obtain a sequence $\langle g_k : k \in \mathbb{N} \rangle$, where $g_k$ is a path through $T_k$ for each $k \in \mathbb{N}$. 
\end{proof}

In the proof above, we do not appear to use the full strength of $\ST$. 
As we have already noted, some of the complexity of $\ST$ appears to arise from the complexity of deciding if condition~(A) holds.  
But in the proof above, it was computable to show that each $\hat T_i$ satisfied condition~(A), and arithmetical to show that the union of a sequence of pairwise disjoint graphs, all of which satisfy condition~(A), satisfies condition~(A).

It is important to note that deciding whether condition~(A) holds for countable graphs is much harder in general.  Indeed, deciding whether a computable graph satisfies condition~(A) is $\Sigma^1_1$-hard.

\begin{proposition}
The set of indices of computable graphs which satisfy condition (A) is $\Sigma^1_1$-hard.
\end{proposition}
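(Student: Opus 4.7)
The plan is to reduce the classical $\Sigma^1_1$-complete problem of ill-foundedness of computable trees to the problem of satisfying condition~(A). Recall that the set
\[
\{e : \varphi_e \text{ codes a tree } T_e \subseteq \N^{<\N} \text{ with an infinite path}\}
\]
is $\Sigma^1_1$-complete (equivalently, the complementary set of indices of well-founded computable trees is $\Pi^1_1$-complete). So it suffices to exhibit a computable function $e \mapsto e'$ such that the computable graph $G_{e'}$ satisfies condition~(A) if and only if $T_e$ has an infinite path.

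The reduction is immediate from the doubling construction already introduced just before \Cref{pathIffMatch}. Given a code for the computable tree $T_e$, I would uniformly produce a code for its doubling tree $\hat T_e$, viewed as a computable graph. The doubling construction is clearly uniformly computable: one enumerates the vertices and edges of $\hat T_e$ directly from the edges of $T_e$, replacing each non-root vertex with a doubling edge in the obvious way. This gives the desired computable function $e \mapsto e'$ on indices.

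By \Cref{pathIffMatch}, the graph $\hat T_e$ satisfies condition~(A) if and only if $T_e$ has an infinite path. Hence $e \mapsto e'$ is a computable many-one reduction from a $\Sigma^1_1$-complete set to the set of indices of computable graphs satisfying condition~(A), establishing the $\Sigma^1_1$-hardness claim.

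There is no real obstacle here; the work has already been done in \Cref{pathIffMatch}. The only step requiring any care is verifying that the doubling construction is genuinely uniformly computable in the index of $T_e$ (and not merely computable from the set of edges of $T_e$), but this is routine since the construction is entirely local at each vertex.
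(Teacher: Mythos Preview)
Your proof is correct and follows essentially the same approach as the paper: both reduce from the $\Sigma^1_1$-complete problem of computable trees with infinite paths via the doubling-tree construction and appeal directly to \Cref{pathIffMatch}. You give a bit more detail on uniformity, but the argument is the same.
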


\begin{proof}
By Theorem 16.XX in \cite{rogers}, it suffices to show that given a computable tree $T$, we can find a computable graph $G$, such that $T$ has an infinite path if and only if $G$ satisfies condition (A).
Given $T$, let $G = \widehat T$, the doubling tree of $T$.
By \Cref{pathIffMatch} we have that $\widehat T$ satisfies condition (A) if and only if $T$ has an infinite path, and we are done.
\end{proof}

While condition~(A) has a $\Pi^1_2$ definition, it cannot be $\Pi^1_2$-complete. 
To see why, recall the statement of $\ST$: $G$ satisfies condition~(A) if and only if $G$ has a perfect matching.  
Since deciding whether $G$ has a perfect matching is $\Sigma^1_1$, it is thus possible to decide if condition~(A) holds using a $\Sigma^1_1$ statement.  
 
For at least some classes of graphs, condition~(A) is {equivalent over $\RCA$} to a sentence of simpler complexity.  
In the special case of doubling tree graphs, \Cref{pathIffMatch} says that $\RCA$ proves the equivalence between satisfying condition~(A) and having a perfect matching.
Since the disjoint union of graphs satisfying condition~(A) continues to satisfy condition~(A), the restriction of  $\ST$ to disjoint unions of doubling trees is actually $\Pi^1_2$. 

More generally, the reversal from $\ST$ to $\SAC$ can be seen as using only the statement that the union of a sequence of pairwise disjoint graphs, all of which have perfect matchings, has a perfect matching.  
This can be viewed as a $\Pi^1_2$ version of $\ST$, 
which we refer to as $\CPM$.

\begin{corollary} \label{disjointunion}
The following are equivalent over $\RCA$:
  \begin{enumerate}
    \item $\SAC$
    \item ($\CPM$) Fix a sequence of pairwise disjoint graphs  $\la G_i \ra$.  
    If $G_i$ has a perfect matching for each $i$,
    then $G = \bigsqcup G_i$ has a perfect matching.
  \end{enumerate}
\end{corollary}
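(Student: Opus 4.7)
The plan is to establish both implications in $\RCA$ via two short direct arguments. For convenience, I will phrase $\SAC$ via Simpson's Theorem V.1.7$'$ (already used in the proof of \Cref{StefToS11AC}): for every sequence $\la T_k\ra$ of trees in $\N^{<\N}$ each having an infinite path, there is a sequence $\la g_k\ra$ with $g_k$ a path through $T_k$. Under this formulation, the forward direction is an immediate application of choice, while the reverse essentially recovers the argument of \Cref{StefToS11AC} but routes through \Cref{pathIffMatch} rather than through condition~(A).

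For $\SAC \Rightarrow \CPM$, I would fix a sequence $\la G_i\ra$ of pairwise disjoint graphs, each admitting a perfect matching. The predicate $\phi(i, M) \equiv $ \emph{$M$ is a perfect matching of $G_i$} is arithmetical (in fact $\Pi^0_2$), hence $\Sigma^1_1$ in the parameters, so applying $\SAC$ to $\phi$ yields a sequence $\la M_i\ra$ of perfect matchings. Because the graphs are pairwise disjoint, the union $M = \bigsqcup_i M_i$ is $\Delta^0_1$ in the sequence and is visibly a perfect matching of $G = \bigsqcup_i G_i$.

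For $\CPM \Rightarrow \SAC$, I would fix a sequence $\la T_k\ra$ of trees each with a path, uniformly form the doubling trees $\la \hat T_k\ra$ in $\RCA$, and set $G = \bigsqcup_k \hat T_k$. By the $(1) \rightarrow (2)$ implication of \Cref{pathIffMatch}, each $\hat T_k$ has a perfect matching, so the hypothesis of $\CPM$ is satisfied and we obtain a perfect matching $M$ of $G$. Tracing $M$ upward from the root of each $\hat T_k$ gives a path $P_k$ through $\hat T_k$, uniformly and $\Delta^0_1$ in $M$; restricting each $P_k$ to $T_k$ yields the desired sequence $\la g_k\ra$. The main technical point, and the one place where care is required, is to check that this reformulation truly stays inside $\RCA$: the original proof of \Cref{StefToS11AC} worked over $\ACA$ because it routed through condition~(A) and needed arithmetical comprehension to see that condition~(A) is preserved under disjoint unions, but the direct appeal to \Cref{pathIffMatch} and $\CPM$ bypasses condition~(A) entirely, so no comprehension beyond $\Delta^0_1$ is invoked.
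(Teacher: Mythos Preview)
Your proposal is correct and follows essentially the same approach as the paper, which simply says that $(1\to 2)$ is an easy application of $\SAC$ and that $(2\to 1)$ is a consequence of the proof of \Cref{StefToS11AC}. Your explicit observation that routing through \Cref{pathIffMatch} and $\CPM$ (rather than through condition~(A)) keeps the reversal inside $\RCA$ is exactly the point needed to justify the base system, and it is a detail the paper leaves implicit.
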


\begin{proof} $(1 \rightarrow 2)$ is an easy application of $\SAC$.  $(2 \rightarrow 1)$ is a consequence of the proof of \Cref{StefToS11AC}.
\end{proof}

Of course, Steffens selected condition~(A) for its graph-theoretic content, rather than for its formula complexity.

\begin{question}
Is there a natural $\Delta^1_2$ statement equivalent to condition~(A), which does not involve simply checking for the existence of a perfect matching?
\end{question}
If this is the case, it would yield a version of full $\ST$ whose complexity would be $\Pi^1_2$, and hence this version of $\ST$ would not imply $\PCA$, even over $\ATR$  (Proposition~4.17 from \cite{aharoni_magidor}).

\subsection{The strength of \texorpdfstring{$\MIM$}{MIM} and Sequential \texorpdfstring{$\ST$}{PM}}

\begin{theorem} \label{thm.MIM.implies.PCA}
  $\MIM$ implies $\PCA$ over $\RCA$.
\end{theorem}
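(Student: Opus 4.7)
My plan is to prove $\MIM$ implies $\PCA$ over $\RCA$ in two stages: first bootstrap to $\ACA$, and then apply $\MIM$ to a disjoint union of doubling trees to obtain $\PCA$.

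For the first stage, I will reuse the graph from the reversal in Proposition \ref{acaIsFB}. Given a one-to-one $f : \N \to \N$, the graph consists of disjoint paths of length $1$ or length $3$. A length-$1$ path has its single edge as its unique maximal independent matching. A length-$3$ path $a - b - c - d$ also has a unique maximal independent matching, namely $\{\{a,b\},\{c,d\}\}$: the single middle edge $\{b,c\}$ is not independent (it admits the proper augmenting path $a - b - c - d$), and each of the singletons $\{\{a,b\}\}$ and $\{\{c,d\}\}$ is properly extended by the perfect matching. Hence $\MIM$ produces exactly the same matching $M$ that $\ST$ would, and the range of $f$ is $\Delta^0_1$-definable from $M$ as $\{n : \{4n, 4n+2\} \notin M\}$, yielding $\ACA$.

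For the second stage, working over $\RCA + \MIM + \ACA$, it suffices by the standard characterization to show that for every sequence $\langle T_k : k \in \N \rangle$ of trees in $\N^{<\N}$, the set $W = \{k : T_k \text{ has an infinite path}\}$ exists. I form $G = \bigsqcup_k \hat T_k$, the disjoint union of doubling trees (defined before Lemma \ref{pathIffMatch}), and apply $\MIM$ to obtain a maximal independent matching $M$ of $G$. Because the components are pairwise disjoint, a minor variation of Lemma \ref{lemma.disjointunionindependent} (together with the standard fact that support-maximality restricts to disjoint components) gives that $M_k := M \cap E(\hat T_k)$ is a maximal independent matching of each $\hat T_k$. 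I then claim that $V(M_k) = V(\hat T_k)$ if and only if $T_k$ has an infinite path. The forward direction is immediate from Lemma \ref{pathIffMatch}, since $V(M_k) = V(\hat T_k)$ means $M_k$ is a perfect matching of $\hat T_k$. For the reverse, if $T_k$ has an infinite path, Lemma \ref{pathIffMatch} supplies a perfect matching $M^*$ of $\hat T_k$; this $M^*$ is vacuously independent (having no unmatched vertex, there are simply no augmenting paths to consider), so the support-maximality of $M_k$ forces $V(M_k) \supseteq V(M^*) = V(\hat T_k)$. Therefore $W$ coincides with the $\Pi^0_2$-definable set $\{k : V(M_k) = V(\hat T_k)\}$, which exists by arithmetical comprehension.

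The main technical observation driving the second stage is that perfect matchings are automatically independent, which makes the reverse implication above transparent and forces the precise dichotomy needed to read off $W$. I do not anticipate a serious obstacle beyond this: the boot-up in Stage~1 is a direct reuse of a previous reversal, and the extraction of $W$ relies on the clean correspondence already encapsulated in Lemma \ref{pathIffMatch}.
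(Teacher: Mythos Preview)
Your proposal is correct and follows essentially the same approach as the paper: first bootstrap to $\ACA$ via the length-$1$/length-$3$ path graph of Proposition~\ref{acaIsFB}, then apply $\MIM$ to the disjoint union of doubling trees and use Lemma~\ref{pathIffMatch} together with the observation that perfect matchings are vacuously independent to read off $\{k : T_k \text{ has an infinite path}\}$ arithmetically. The paper argues the key dichotomy slightly more directly (without explicitly asserting that each $M_k$ is maximal independent in $\hat T_k$), but the content is the same.
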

 
\begin{proof}
First, we claim that $\MIM$ implies $\ACA$.  Using the same construction as in the proof of \Cref{acaIsFB}, we get a graph consisting of disjoint paths of length $1$ or $3$.  Since each disjoint path has as its maximal independent matching a perfect matching, the maximal independent matching of the entire graph will be its perfect matching, from which we can define the range of the given function.

We may now work over $\ACA$.   
Let $\langle T_i : i \in \mathbb{N} \rangle$ be a sequence of trees $T_i \subseteq \N^{<\N}$.   
We wish to define a set $Z$ such that $i \in Z$ if and only if $T_i$ has an infinite path.
Form the associated sequence $\langle \hat T_i : i \in \mathbb{N}\rangle$ of doubling trees.  
Let $G$ be the disjoint union $G = \bigsqcup \hat T_i$, and by $\MIM$, let $M$ be a maximal independent matching for $G$.

Consider $M$ restricted to $\hat T_i$.  Note that if this is not a perfect matching, then $\hat T_i$ cannot have any perfect matching, since such a perfect matching would be an independent matching of larger support.  Thus $M$ restricted to $\hat T_i$ is a perfect matching if and only if $\hat T_i$ has a perfect matching, which by Lemma \ref{pathIffMatch} occurs if and only if $T_i$ has an infinite path.  

Since $\ACA$ can form the set of $i$ such that $M$ restricted to $\hat T_i$ is a perfect matching of $\hat T_i$, we get the desired set $Z$. 
\end{proof}

Recall from Statement \ref{statements}, Sequential $\ST$ asserts that for each sequence of disjoint graphs $\la G_i : i \in \N \ra$, there is a sequence of matchings $\la M_i : i\in \N\ra$ such that for all $i$, if $G_i$ satisfies condition~(A), then $M_i$ is a perfect matching of $G_i$.

Sequential $\ST$  clearly implies $\ST$ over $\RCA$, and it is natural to conjecture that Sequential $\ST$ is strictly stronger than $\ST$. 
Although less common in reverse mathematics, principles like Sequential $\ST$ sometimes occur in the study of Weihrauch principles.  There, Sequential $\ST$ is called the ``parallelization'' of the ``total continuation'' of $\ST$ (see \cite{kihara}, Section 8, for another similar principle).

\begin{prop} \label{prop.MaxImpliesSequential}
  $\Maximality$ implies Sequential $\ST$ over $\RCA$.
\end{prop}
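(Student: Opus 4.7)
The plan is to reduce Sequential $\ST$ to a single application of $\Maximality$ on the disjoint union of the given sequence of graphs.

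First, given a sequence $\langle G_i : i \in \N\rangle$ of pairwise disjoint graphs, I would form the graph $G = \bigsqcup_{i \in \N} G_i$. This is $\Delta^0_1$ in the code for the sequence, so $G$ exists in $\RCA$. Applying $\Maximality$ to $G$ produces a matching $M$ of $G$ whose support $V(M)$ is not properly contained in the support of any other matching of $G$. Because the $G_i$ are pairwise disjoint, every edge of $M$ is entirely contained in some $G_i$, so the restriction $M_i = M \cap E(G_i)$ is a matching of $G_i$ and $M = \bigsqcup_i M_i$. The sequence $\langle M_i : i \in \N\rangle$ is $\Delta^0_1$ in $M$ and the coding of $G$, so it exists in $\RCA$.

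The main claim is that whenever $G_i$ satisfies condition~(A), $M_i$ is a perfect matching of $G_i$. Suppose toward a contradiction that some $G_i$ satisfies condition~(A) but $M_i$ is imperfect. Then there is a vertex $s \in V(G_i)\setminus V(M_i)$, and by condition~(A) applied within $G_i$, there is an $M_i$-augmenting path $P \subseteq G_i$ starting at $s$. Set $M_i' = M_i \mathbin{\Delta} P$, which is a matching of $G_i$ with $V(M_i) \subsetneq V(M_i')$ (either one or two new vertices are covered, depending on whether $P$ is infinite or terminates in a second unmatched vertex). Define $M' = (M \setminus M_i) \cup M_i'$; because edges of $M_i'$ lie wholly in $G_i$ and the $G_j$ are disjoint, $M'$ is a matching of $G$, and $V(M) \subsetneq V(M')$. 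This contradicts the maximality of $V(M)$. All of this is $\Delta^0_1$ in $M$, $P$, and the sequence, so the argument goes through in $\RCA$.

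There is no serious obstacle here: once one observes that disjointness of the $G_i$ makes restrictions of matchings into matchings and that condition~(A) is a purely local property of each component, the maximality of $M$ forces each component to be perfectly matched exactly when it could be. The only mild care needed is to confirm that augmenting path arguments carried out inside a single component produce a legitimate global extension of $M$, which follows immediately from the disjointness of the $G_i$.
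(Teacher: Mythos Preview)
Your proof is correct and follows essentially the same approach as the paper: form the disjoint union, apply $\Maximality$ once, restrict the maximal matching to each component, and argue that any component satisfying condition~(A) with an imperfect restriction would yield an augmenting path and hence a matching of strictly larger support, contradicting maximality. You are somewhat more explicit than the paper about the $\RCA$-level bookkeeping and about assembling the global matching $M'$, but the argument is the same.
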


\begin{proof}
  Assume $\Maximality$, and let $S = \la G_i : i \in \N \ra$ be a sequence of pairwise disjoint graphs.  Let $G' = \bigoplus G_i$ be the effective disjoint union of the columns of $S$, and apply $\Maximality$ to obtain a maximal matching $M$ of $G'$.  For each $i$, let $M_i$ be the restriction of $M$ to $G_i$.  To verify that $\la M_i : i \in \mathbb{N} \ra$ is the desired sequence of edge sets, fix $i$ and suppose $G_i = (V, E)$ satisfies condition~(A).  For a contradiction, assume $M_i$ is not a perfect matching of $G_i$.  Then there is a vertex $s \in V \setminus V(M_i)$.  Since $G_i$ satisfies condition~(A), there is an $M_i$-augmenting path $P$ starting at $s$.  So $M_i \mathbin{\Delta} P$ is a matching of $G_i$ that improves the support of $M_i$, contradicting the fact that $M$ is maximal.
\end{proof}

In fact, our reversal from $\MIM$ to $\PCA$ also goes through in Sequential $\ST$. 

\begin{prop} \label{seqToPca}
  Sequential $\ST$ implies $\PCA$ over $\RCA$. 
\end{prop}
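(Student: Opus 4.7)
The plan is to mimic the proof of Theorem \ref{thm.MIM.implies.PCA} ($\MIM$ implies $\PCA$), replacing the single application of $\MIM$ with an application of Sequential $\ST$. The key observation is that the doubling-tree construction from \Cref{pathIffMatch} makes the predicate ``$T$ has an infinite path'' equivalent \emph{both} to ``$\hat T$ has a perfect matching'' \emph{and} to ``$\hat T$ satisfies condition~(A).'' This double equivalence is what lets us bypass the need to apply maximality in a single pooled graph.

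First, I would verify that Sequential $\ST$ implies $\ACA$, so that I can work over $\ACA$. This is immediate, since Sequential $\ST$ applied to the one-term sequence $\la G\ra$ yields $\ST$, and $\ST$ implies $\SAC$ by \Cref{StefToS11AC}; hence $\ACA$ holds.

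Next, given a sequence $\la T_i : i \in \N\ra$ of trees $T_i \subseteq \N^{<\N}$, I would uniformly form the associated sequence $\la \hat T_i : i \in \N\ra$ of doubling trees, and relabel the vertex sets (e.g.\ by prepending a column index) so that the graphs are pairwise disjoint. The sequence exists in $\RCA$ since the doubling-tree construction is uniformly computable. Applying Sequential $\ST$ to this sequence produces a sequence $\la M_i : i \in \N\ra$ of edge sets with the property that whenever $\hat T_i$ satisfies condition~(A), the set $M_i$ is a perfect matching of $\hat T_i$.

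Finally, I would identify the range $Z = \{i : T_i \text{ has an infinite path}\}$ arithmetically. By \Cref{pathIffMatch}, $T_i$ has an infinite path if and only if $\hat T_i$ satisfies condition~(A), if and only if $\hat T_i$ has a perfect matching. Hence if $T_i$ has an infinite path, Sequential $\ST$ forces $M_i$ to be a perfect matching of $\hat T_i$; and conversely, if $T_i$ has no infinite path, then $\hat T_i$ has no perfect matching at all, so $M_i$ cannot be one. Thus $i \in Z$ if and only if $M_i$ is a perfect matching of $\hat T_i$, which is an arithmetical (in fact $\Pi^0_2$) condition in the parameters $\la \hat T_i\ra$ and $\la M_i\ra$. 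Arithmetical comprehension produces $Z$, which is exactly what is needed for $\PCA$. There is no real obstacle beyond noting that Sequential $\ST$ is indifferent to what $M_i$ looks like when $\hat T_i$ fails condition~(A); the point is precisely that we can detect this failure by checking whether $M_i$ happens to cover every vertex.
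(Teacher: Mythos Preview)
Your proposal is correct and follows essentially the same argument as the paper: reduce to $\ACA$ via $\ST$, apply Sequential $\ST$ to the sequence of doubling trees, and then use \Cref{pathIffMatch} to identify $\{i : T_i \text{ has an infinite path}\}$ arithmetically as the set of $i$ for which $M_i$ is a perfect matching of $\hat T_i$. The only cosmetic difference is that you route through $\SAC$ to obtain $\ACA$, whereas the paper cites the $\ACA$ reversal directly.
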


\begin{proof}
As Sequential $\ST$ implies $\ST$ which implies $\ACA$ (all over $\RCA$), we may work over $\ACA$.
Let $\la T_i : i \in \N \ra$ be a sequence of trees.  It suffices
to show that the set $I = \{i : T_i \text{ has an infinite path} \}$ exists.

For each tree $T_i$, consider its doubling tree $\hat T_i$.  Applying  Sequential  $\ST$ to $\la \hat T_i : i \in \N \ra$, we obtain a sequence  $\la M_i : i \in \mathbb{N} \ra$ of edge sets with the property that if $\hat T_i$ satisfies condition~(A), then $M_i$ is a perfect matching of $\hat T_i$.
  
 Since it is arithmetical to decide if a given $M_i$ is a perfect matching, we can form $\{i \st M_i \text{ is a perfect matching of } \hat T_i\}$ in $\ACA$.  By \Cref{pathIffMatch}, this is the set of $i$ such that $T_i$ has an infinite path.
\end{proof}

In the previous section, we showed that $\MIM$ is provable in $\PtCA$, that $\Maximality$ is provable in $\PtCA^+$, and conjectured that $\Maximality$ is provable in $\PtCA$. 

\begin{question}
What is the exact strength of $\Maximality$ and, separately, of $\MIM$?
\end{question}

\section{Matchings for graphs with no infinite paths}\label{sect.noinfpaths}
In this final section, we will refine our analysis of $\ST$ by considering a special case, whose strength is in the region of $\ATR$.
We write $\FPST$ to denote the restriction of $\ST$ to graphs with only finite paths. 

Recall that much of the complexity of $\ST$ comes from the complexity of condition~(A).
In the case of $\FPST$, condition~(A) is equivalent to a $\Pi^1_1$  formula (with the graph as a parameter), and $\FPST$ is equivalent to a $\Pi^1_2$ sentence.   Moreover, the notion of ``independent matching'' becomes arithmetical.
These reductions in complexity result in significantly improved upper bounds.

\begin{theorem}
\label{steffensnoinf} 
$\PCA$ proves $\MIM$ for graphs without infinite paths.
\end{theorem}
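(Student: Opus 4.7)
The plan is to adapt the proof of \Cref{lemma.SteffensLemma3.p12ca} (that $\PtCA$ proves $\MIM$) to the present restricted setting, exploiting the fact that the no-infinite-paths hypothesis reduces the definitional complexity of ``independence'' by one level in the analytical hierarchy. Specifically, when $G$ has no infinite paths, every $M$-augmenting path is finite and hence a coded natural-number object, so the statement ``$M$ is independent'' — the non-existence of a proper $M$-augmenting path — becomes $\Pi^0_1$ in $G$ and $M$, rather than $\Pi^1_1$. Consequently, ``there exists an independent matching $Y$ with prescribed support properties'' drops from $\Sigma^1_2$ to $\Sigma^1_1$, which is exactly the fragment reflected by a $\beta$-model. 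Since $\PCA$ over $\ACA$ is equivalent to the existence of countable coded $\beta$-models containing any given set, $\beta$-models now suffice to carry out the analogue of Steffens' construction.

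In detail, the first step is to use $\PCA$ to fix a countable coded $\beta$-model $\M$ containing $G$. Working arithmetically in a code for $\M$, I build a chain $\{M_n\}_{n\in\N}$ of matchings from $\M$ exactly as in the $\PtCA$ proof: let $g_n$ enumerate the vertices of $G$, and let $M_n$ be the first $\M$-independent matching in some fixed enumeration of the columns of $\M$ extending $V(M_{n-1})$ and covering $g_n$ if such a matching exists, and $M_n = M_{n-1}$ otherwise. Then form the derived chain $M_0^* = M_0$, $M_{n+1}^* = (M_{n+1} \setminus E(V(M_n))) \cup M_n^*$, and set $M^* = \bigcup_n M_n^*$. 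By \Cref{lemmaS2} and \Cref{lemma.indepremoveindep}, each $M_n^*$ is \inM{independent}.

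Next I verify that $M^*$ is truly independent (outside $\M$). If not, there is a proper $M^*$-augmenting path $P$ starting at some $v \in V(G) \setminus V(M^*)$; by hypothesis on $G$, $P$ is finite. Because the $M_n^*$ form an increasing chain covering $V(M^*)$, for large enough $n$ the restriction $P_n$ of $P$ to $V(M_n^*) \cup \{v\}$ witnesses a proper $M_n^*$-augmenting path, and $P_n$ is arithmetical in $M_n^*$ and $G$, hence lies in $\M$. This contradicts the $\M$-independence of $M_n^*$. Finally I verify maximality: if $M^*$ is not maximal independent, let $g_n$ be the least vertex outside $V(M^*)$ covered by some truly independent matching properly extending $V(M_n^*)$. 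The existential statement
\[
\exists Y \, \big[Y \text{ is a matching} \land Y \text{ is independent} \land V(Y) \supsetneq V(M_n^*) \land g_n \in V(Y)\big]
\]
is now $\Sigma^1_1$ (since independence is arithmetical under the no-infinite-paths hypothesis), so by $\beta$-reflection it holds in $\M$, contradicting the definition of $M_n$.

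The main obstacle is the verification in the independence step that the restricted path $P_n$ is genuinely coded inside $\M$; the no-infinite-paths hypothesis is what guarantees $P$ is finite and hence $P_n$ is arithmetical in $M_n^*$ and $G$, rather than an honest $\Sigma^1_1$ object needing reflection into $\M$. This, together with the drop of the maximality witness from $\Sigma^1_2$ to $\Sigma^1_1$, is precisely where the hypothesis pays off and lets $\PCA$ replace $\PtCA$.
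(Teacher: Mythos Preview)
Your proposal is correct and follows essentially the same approach as the paper: replace the $\beta_2$-model in the proof of \Cref{lemma.SteffensLemma3.p12ca} with a $\beta$-model, justified by the observation that the no-infinite-paths hypothesis makes independence arithmetical, so the relevant witness statement drops from $\Sigma^1_2$ to $\Sigma^1_1$. The paper's own proof is a terse two-sentence sketch pointing to exactly this complexity reduction; you have simply spelled out the details, including the clean observation that a finite proper augmenting path is a coded number and hence already in $\M$, avoiding the need for reflection in the independence step.
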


\begin{proof} Let $\M$ be a countable coded $\beta$-model containing $G$.  Using $\ACA$ and a code for $\M$ as a parameter, construct a maximal independent matching as described in our proof of full $\MIM$ (\Cref{theorem.steffens-in-Pi12-CA+}) which used a $\beta_2$-model.  The key difference is that now the property of a matching being independent is arithmetical in the matching and the graph, and therefore the formula witnessing a counterexample to a matching being maximal independent is $\Sigma^1_1$ (with parameters from the model), and therefore reflects into or out of a $\beta$-model.  The rest of the proof is analogous.
\end{proof}

\begin{corollary}\label{fpstinpi11ca+}
$\PCAp$ proves  $\FPST$.  
\end{corollary}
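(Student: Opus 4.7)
The plan is to mirror the proof of \Cref{theorem.steffens-in-Pi12-CA+} (which showed $\PtCA^+$ proves $\ST$), replacing the appeal to general $\MIM$ (which needed $\PtCA$) by the refined result \Cref{steffensnoinf}, which provides $\MIM$ for graphs without infinite paths in $\PCA$. Since $\PCAp$ is, by analogy with $\PtCA^+$, the system that permits $\omega$-many iterated applications of $\PCA$, this iteration of a $\PCA$-level step will be exactly what $\PCAp$ affords us.

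First, I would record the key preservation observation: if $G$ has no infinite paths and $V' \subseteq V(G)$, then the induced subgraph $G \setminus V'$ also has no infinite paths, since any path in $G \setminus V'$ is a path in $G$. This means the ``no infinite paths'' hypothesis propagates through every subgraph we construct along the way.

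Next, I would refine \Cref{lem-extension} part (2) to the no-infinite-path setting and check that it goes through in $\PCA$: given $G$ with no infinite paths satisfying condition~(A), no nonempty independent subgraphs, and $s \in V(G)$, there is a matching $M''$ covering $s$ with $G \setminus V(M'')$ still satisfying condition~(A). The proof in the excerpt applies $\MIM$ to $G \setminus s$ (which still has no infinite paths, by the observation above), so by \Cref{steffensnoinf} this step uses $\PCA$; the remainder of the argument (taking $M'' = M' \mathbin{\Delta} P$ and invoking Steffens' Lemmas~5 and~7) was already noted to formalize in $\ACA$.

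With this in hand, I would carry out the same iterative construction as in \Cref{theorem.steffens-in-Pi12-CA+}: given $G$ with no infinite paths satisfying condition~(A), apply \Cref{steffensnoinf} to obtain a maximal independent matching $M_0$; set $G_1 = G \setminus V(M_0)$, which has no infinite paths, satisfies condition~(A) by \Cref{lemma.removing-indep-(A)} part (1), and has no nonempty independent subgraphs by \Cref{lemma.indepremoveindep} part (2); apply the refined extension step to $G_1$ and the least unmatched vertex $v_1$ to obtain $M_1$ covering $v_1$ while preserving condition~(A); repeat with $G_2 = G_1 \setminus V(M_1)$, and so on. Each stage is a single $\PCA$ step (it merely invokes \Cref{steffensnoinf} once, together with $\ACA$-level bookkeeping), and the sequence $\langle M_n \rangle_{n \in \N}$ is produced by $\omega$-many iterations of $\PCA$, exactly what $\PCAp$ supplies. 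Taking $M = \bigcup_n M_n$ yields the desired perfect matching, since every vertex of $G$ was eventually targeted.

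The main obstacle is verifying that the refined version of \Cref{lem-extension} part (2) truly only uses $\PCA$ when restricted to no-infinite-path graphs; this comes down to checking that the single $\MIM$-invocation inside that lemma is indeed the one addressed by \Cref{steffensnoinf}, and that the subsequent combinatorial arguments (Steffens' Lemmas~5 and~7) are arithmetical. Assuming this is in order, the remainder of the construction is a routine transcription of the $\PtCA^+$ argument with ``$\PtCA$'' replaced by ``$\PCA$'' throughout.
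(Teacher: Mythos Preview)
Your proposal is correct and follows essentially the same approach as the paper: both argue that the proof of \Cref{theorem.steffens-in-Pi12-CA+} goes through with $\PCA$ in place of $\PtCA$ once one restricts to graphs with no infinite paths, the key point being that \Cref{steffensnoinf} supplies $\MIM$ for such graphs in $\PCA$. You are more explicit than the paper about checking that the ``no infinite paths'' hypothesis is preserved under passing to subgraphs and that the $\MIM$-invocation inside \Cref{lem-extension} part~(2) is the only place where $\PtCA$ was needed, but these verifications are exactly what the paper's one-sentence analogy is implicitly relying on.
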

\begin{proof}
Because there are no infinite paths in $G$, the property of a matching being independent is arithmetical and the property of a graph satisfying condition~(A) is $\Pi^1_1$.  Therefore the proof is analogous to the proof given earlier of $\ST$ in \Cref{theorem.steffens-in-Pi12-CA+}, this time using countable coded $\beta$-models.
\end{proof}

Because $\FPST$ is equivalent to a $\Pi^1_2$ sentence,  \Cref{doesnotimply} with $k=1$ shows that this upper bound is not optimal. 

\begin{corollary}\label{fpstinpi11ca}
$\FPST$ does not imply $\PCA$ over $\RCA$. 
\end{corollary}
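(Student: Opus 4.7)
The plan is to deduce this corollary from \Cref{doesnotimply} applied with $k = 1$, once we verify that $\FPST$ is (equivalent to) a $\Pi^1_2$ sentence. The hint just above the statement signals exactly this strategy, so the work is in checking the complexity count and assembling the appeal.

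First I would carry out the complexity calculation. The restriction ``$G$ has no infinite paths'' is $\Pi^1_1$ in $G$ (for every sequence of distinct vertices, some adjacent pair fails to be an edge). For graphs in this class, any $M$-augmenting path $P$ is automatically finite, so ``there exists an $M$-augmenting path starting at $s$'' is witnessed by a code for a finite sequence of vertices and is therefore arithmetical (indeed $\Sigma^0_1$) in the parameters $G$, $M$, and $s$. Consequently the hypothesis ``$G$ satisfies condition~(A)'' collapses from a $\Pi^1_2$ statement in general to a $\Pi^1_1$ statement in this restricted setting (universal quantification over matchings $M$ and unmatched vertices $s$, with an arithmetical matrix). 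The conclusion ``$G$ has a perfect matching'' is $\Sigma^1_1$. Putting these pieces together, $\FPST$ has the shape $\forall G \, [\Pi^1_1 \wedge \Pi^1_1 \rightarrow \Sigma^1_1]$, which after prenexing is equivalent to a $\Pi^1_2$ sentence.

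Next I would invoke \Cref{fpstinpi11ca+}, by which $\FPST$ is provable in $\PCAp$ and is therefore true; in particular $\FPST$ is consistent with $\ATR$. Now \Cref{doesnotimply} with $k = 1$ states that no $\Pi^1_2$ sentence consistent with $\ATR$ can imply $\PCA$ over $\ATR$. Applying this to $\FPST$ yields that $\FPST$ does not imply $\PCA$ over $\ATR$, and since $\RCA$ is contained in $\ATR$, a fortiori $\FPST$ does not imply $\PCA$ over $\RCA$.

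The main (and really only) obstacle is the complexity calculation: one must be confident that restricting to graphs with no infinite paths genuinely converts the inner $\exists P$ of condition~(A) into an arithmetical quantifier, so that the outer hypothesis is $\Pi^1_1$ rather than $\Pi^1_2$. Once that is in hand, the corollary is an immediate application of \Cref{doesnotimply}.
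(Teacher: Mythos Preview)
Your proposal is correct and follows essentially the same approach as the paper: the text preceding the corollary already notes that $\FPST$ is equivalent to a $\Pi^1_2$ sentence (for exactly the reasons you spell out), and the corollary is then an immediate application of \Cref{doesnotimply} with $k=1$. Your explicit complexity calculation and the observation about consistency via \Cref{fpstinpi11ca+} simply make precise what the paper leaves to the reader.
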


\subsection{Perfect matchings code hyperarithmetical sets}

To obtain lower bounds, we will code hyperarithmetical sets into $\FPST$.  
We assume a basic knowledge of hyperarithmetical theory, and refer the reader to Ash and Knight~\cite{ashknight} for additional background.

We will identify each computable ordinal with its corresponding notation in Kleene's $\O$.  
For each $X \subseteq \N$ and $e \in \O^X$, the set $H_e^X$  (referred to as an \emph{$H^X$-set}) corresponds to the hierarchy resulting from recursively taking jumps of $X$ along the $X$-computable well order with notation $e$.  
As pointed out by Aharoni, Magidor and Shore in \cite{aharoni_magidor}, $\ATR$ is equivalent to
\[\forall X \, \forall e \, [e \in \O^X \rightarrow H_e^X \text{ exists}].\]

We will show that for each set $X$ and for each $e \in \O^X$, there is an $X$-computable graph $G^X_e$ which satisfies condition~(A), such that any perfect matching of $G^X_e$ computes the set $H^X_e$.
As a result of the  uniformity of our construction, it follows that $\mathsf{ATR}$ is Weihrauch reducible to $\ST$ (see \Cref{sect.weihrauch}).  
The $\Pi^1_2$ nature of condition~(A) complicates the reverse-mathematical picture, which will be discussed at the end of the section.  

We follow the general approach of \cite{aharoni_magidor}, by   defining graphs whose perfect matchings code the truth or falsity of \emph{propositions}.  
We also recursively show how, given sentences whose truth is coded by the perfect matchings of coding graphs, to define a new graph that codes the negation, conjunction, or quantification of these sentences. 
We will then be able to code the relation $n \in X$ as a simple $X$-computable proposition.  
By coding negations, conjunctions, and existentials of already coded propositions, we will code the membership relation of the $H^X$-sets corresponding to successor ordinals.
Finally, by coding sequences of already coded propositions, we will code the membership relation of the $H^X$-sets corresponding to limit ordinals.
Appealing to effective transfinite recursion, a consequence of the recursion theorem, we obtain a single uniform computation that maps sets $X$ and codes $e$ to graphs $G_e^X$.

\begin{definition}
  A \emph{coding graph}, illustrated in \Cref{figure.basiccodinggraph},
  is a tuple $\la G, l, r, c \ra$ such that $G$ is a connected graph and $l, r, c \in V(G)$.  We write   $\inside{G} = G \setminus \{l, r, c\}$ to refer to the ``interior'' of the coding graph.  We also require that:
  $r$ is adjacent to a unique vertex in $V(\inside{G})$,  $l$ is adjacent to a unique vertex in $V(\inside{G})$, and $c$ is adjacent to $l$ and $r$, but is not adjacent to any vertices in $V(\inside{G})$.
\end{definition}

\begin{figure}[htb]
  \-\hfill
    \begin{tikzpicture}
      \draw (0,0.5)--(-1.5,0)--(0,-0.5)--(1.5,0)--(0,0.5);
      \node[fill=white,shape=rectangle,draw,inner xsep=4mm,inner ysep=2mm,dashed,rounded corners=3mm] at (0,0.5) {$\inside{G}$};
      \node[fill=white,shape=circle,draw] at (-1.5,0) {$l$};
      \node[fill=white,shape=circle,draw] at (1.5,0) {$r$};
      \node[fill=white,shape=circle,draw] at (0,-0.5) {$c$};
    \end{tikzpicture}
    \hfill\-
  \caption{The coding graph $\la G, l, r, c \ra$}
  \label{figure.basiccodinggraph}
\end{figure}

We will prove that each of the coding graphs that we recursively define has a perfect matching, and hence satisfies condition~(A).  In fact, we will prove that each graph we build will have a \emph{unique} perfect matching.

\begin{definition}
  Suppose that a coding graph $\la G, l, r, c \ra$ has a unique perfect matching.
  We say that this graph \emph{codes true} if the unique matching contains the edge between $\inside{G}$ and $l$, and that it \emph{codes false} if the unique matching contains the edge between $\inside{G}$ and $r$.
\end{definition}

The central  vertex  $c$ is  included to ensure that the coding graph will have a matching.
In each step of the recursive construction of coding graphs, we will modify the coding graphs from the previous stage, both by removing their central vertices and by adding new vertices and edges.

\begin{lemma}\label{lem.codingtruenegation}
\-\\
(1) There is a coding graph with a unique matching that codes true.
\\
(2) There is a single computable procedure which, given any coding graph with a unique matching that codes a predicate $P$, returns another coding graph with a unique matching that codes the negation of $P$.
\end{lemma}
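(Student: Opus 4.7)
For part (1), the plan is to exhibit an explicit small gadget. I would take the six-vertex graph with $V(G) = \{l, r, c, v_l, u, v_r\}$ and edge set
\[E(G) = \bigl\{\{l,c\},\, \{c,r\},\, \{l, v_l\},\, \{r, v_r\},\, \{u, v_r\}\bigr\},\]
so that $\inside{G} = \{v_l, u, v_r\}$. This is a connected coding graph, with $v_l$ and $v_r$ the unique interior neighbors of $l$ and $r$, and with $c$ adjacent only to $l$ and $r$. The asymmetry is deliberate: $v_l$ has no interior neighbors, while $u$ and $v_r$ are linked by an edge so that the ``right-hand side'' of the interior can still be covered by a matching. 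Uniqueness of a perfect matching then follows by a chain of forced edges: since $v_l$'s only neighbor in $G$ is $l$, we must have $\{l, v_l\} \in M$; this forces $c$ to be matched to $r$ (its only remaining neighbor); and then $u$ must be matched to its sole neighbor $v_r$. The resulting unique matching $\{\{l, v_l\}, \{c, r\}, \{u, v_r\}\}$ contains the edge between $\inside{G}$ and $l$, so the graph codes true.

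For part (2), the procedure is simply to swap the roles of $l$ and $r$: given a coding graph $\langle G, l, r, c \rangle$, output $\langle G, r, l, c \rangle$, leaving the underlying graph and central vertex unchanged. This is uniformly computable, and the clauses in the definition of a coding graph are symmetric between $l$ and $r$, so the output is again a coding graph with the same interior. Because the underlying graph is unchanged, its unique perfect matching $M$ persists as a set of edges; but the edge of $M$ that was incident to $l$ on the $\inside{G}$ side is now incident to the new ``$r$'', and vice versa. Hence the coded truth value is flipped, so the new graph codes $\neg P$.

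The only point requiring genuine care is in part (1), where one must ensure the gadget has a \emph{unique} perfect matching rather than the two symmetric ones that arise from a more naive path-shaped interior; this is exactly why my construction above is deliberately asymmetric, with $v_l$ made a leaf. Once that gadget is in hand, part (2) is routine book-keeping, and no further obstacle is expected.
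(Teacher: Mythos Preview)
Your proof is correct. For part~(1), your gadget differs cosmetically from the paper's (which uses the cycle $x$--$y$--$r$--$c$--$l$--$x$ together with a pendant edge $\{y,z\}$), but the mechanism is identical: a leaf vertex forces the first edge of the perfect matching, and the remaining edges cascade.

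For part~(2), your approach is genuinely different from the paper's and considerably simpler. The paper builds a new, larger graph: it takes $G \setminus \{c\}$ as the new interior, introduces fresh vertices $l'$ and $r'$, adjoins edges $\{l,l'\}$ and $\{r,r'\}$, and reuses $c$ as the center of the new tuple $\langle G', l', r', c\rangle$; it then argues that the unique perfect matching of $G'$ flips the coded value. You instead observe that the definition of a coding graph is symmetric in $l$ and $r$, so the relabeling $\langle G, l, r, c\rangle \mapsto \langle G, r, l, c\rangle$ already does the job with no change to the underlying graph or its unique perfect matching. Your shortcut is valid for the lemma as stated, and nothing downstream in the paper depends on the particular form of the negation gadget---negation is used only as a black box in the conjunction and existential constructions. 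The paper's version has the mild aesthetic advantage of fitting the pattern of the other gadgets, each of which grows the graph by a new outer shell; your version is the more economical proof of this lemma.
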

\begin{proof}
  (1) To code ``true,'' we define a coding graph $\la G, l, r, c \ra$ with three additional vertices $\{x,y,z\}$, and edges consisting of the cycle $x$-$y$-$r$-$c$-$l$-$x$, together with the single edge $\{y, z\}$.  
This graph is illustrated in \Cref{figure.codingTrueNeg}, left.
Note that the interior of this graph has vertex set $V(\inside{G}) = \{x, y, z \}$.  
  Starting with the edge $\{y,z\}$, it is easy to check that this graph has a unique perfect matching.  

  (2) Suppose we are given a coding graph $\la G, l, r, c\ra$ coding $P$.  We define a coding graph $\la G', l', r', c\ra$ that codes $\neg P$ by setting $\inside{G'} = G \setminus \{c\}$, connecting $l$ to $l'$, and connecting $r$ to $r'$.  This graph is illustrated in \Cref{figure.codingTrueNeg}, right.

  By assumption, $\la G, l, r, c\ra$ has a unique perfect matching.  To see why the new graph $\la G', l', r', c\ra$ has a {unique} perfect matching, note that in any perfect matching, exactly one of $l$ or $r$ matches into $\inside{G}$.  Without loss of generality, assume that $l$ matches into $\inside{G}$.  Then $r$ needs to be matched to $r'$, and $c$ needs to be matched to $l'$, thus yielding a unique perfect matching.

  Finally, we show that this new graph correctly codes $\neg P$. If the original graph $G$ codes true, then the unique matching of $G'$ must match $l$ into $\inside{G}$, which forces $\{ r, r'\}$ to be in the matching, so $G'$ codes false as desired.  On the other hand, if $G$ codes false, then a similar argument shows $G'$ codes true.
\end{proof}

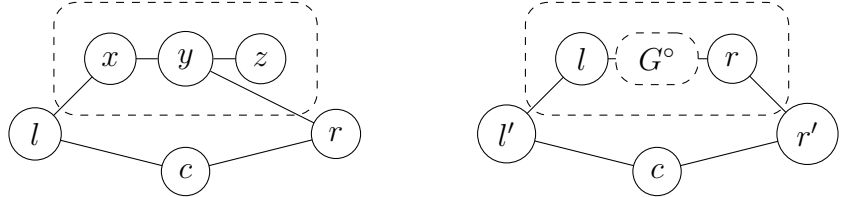
\begin{figure}[htb]
  \-\hfill
  \begin{tikzpicture}
    \draw[dashed,rounded corners=3mm] (-1.75,0.25) rectangle (1.75,1.75);
    \node[fill=white,shape=circle,draw] (Y) at (0,1) {$y$};
    \draw (-2,0) -- (0,-.5) -- (2,0)-- (Y) -- (-1,1) -- (-2,0);
    \draw (Y) -- (1, 1);
    \node[fill=white,shape=circle,draw] at (-1,1) {$x$};
    \node[fill=white,shape=circle,draw] at (1,1) {$z$};
    \node[fill=white,shape=circle,draw] at (-2,0) {$l$};
    \node[fill=white,shape=circle,draw] at (2,0) {$r$};
    \node[fill=white,shape=circle,draw] at (0,-.5) {$c$};
  \end{tikzpicture}
  \hfill\-
  \begin{tikzpicture}
    \draw (-2,0) -- (0,-.5) -- (2,0)-- (1,1) -- (0,1)--(-1,1) -- (-2,0);
    \draw[dashed,rounded corners=3mm] (-1.75,0.25) rectangle (1.75,1.75);
    \node[fill=white,shape=rectangle,draw,inner xsep=3mm,inner ysep=2mm,dashed,rounded corners=3mm] at (0,1) {$\inside{G}$};
    \node[fill=white,shape=circle,draw] at (-1,1) {$l$};
    \node[fill=white,shape=circle,draw] at (1,1) {$r$};

    \node[fill=white,shape=circle,draw] at (-2,0) {$l'$};
    \node[fill=white,shape=circle,draw] at (2,0) {$r'$};
    \node[fill=white,shape=circle,draw] at (0,-.5) {$c$};
  \end{tikzpicture}
  \hfill\-
  \caption{\emph{Left} a graph  that codes ``true''.
    \emph{Right}, a graph that codes negation.
  }
  \label{figure.codingTrueNeg}
\end{figure}

\begin{lemma}\label{lem.codingand}
  There is a single computable procedure which, given any two coding graphs $\la G_i, l_i, r_i, c_i \ra$, $i \in \{1, 2\}$, each with a unique perfect matching, and where $G_i$ codes  $P_i$, returns another coding graph $\la G, l, r, c \ra$ with a unique matching, that codes $\neg P_1 \land P_2$.

  Together with the ability to code negations, it follows that we can construct coding graphs to code $P_1 \land P_2$, $P_1 \lor P_2$, and $P_1 \to P_2$.
\end{lemma}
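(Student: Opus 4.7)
The plan is to build the coding graph $\la G, l, r, c \ra$ by gluing $G_1$ and $G_2$ through a small attaching gadget. Concretely, remove $c_1$ and $c_2$; introduce fresh boundary vertices $l$, $r$, and $c$ together with a few auxiliary interior vertices; let the unique interior neighbor of $l$ lie at one end of a short chain that reaches $l_1$, let the unique interior neighbor of $r$ lie at one end of a short chain that reaches $r_2$, and add a bridging path (of carefully chosen parity) between the $r_1$-side of $G_1$ and the $l_2$-side of $G_2$. The frame edges $\{l,c\}$ and $\{r,c\}$ complete the coding-graph structure, and the structural requirements of the definition of a coding graph hold by construction.

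The verification hinges on the following consequence of uniqueness of each $G_i$'s perfect matching: the restriction $G_i\setminus c_i$ admits exactly one matching covering $V(G_i)\setminus\{c_i\}$ except for a single vertex, and that uncovered vertex is $r_i$ when $P_i$ is true and $l_i$ when $P_i$ is false. (Any other matching of $G_i\setminus c_i$ could be extended by the appropriate $c_i$-edge to a second perfect matching of $G_i$, violating uniqueness.) The proof then proceeds by case analysis over the four truth assignments to $(P_1, P_2)$. In each case, the forced sub-matchings on $G_1\setminus c_1$ and $G_2\setminus c_2$ determine, via degree and parity constraints along the bridging gadget, every edge of the unique perfect matching of $G$, and one reads off which side of the frame carries the matching. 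The gadget is arranged so that $l$ takes its interior edge precisely when $l_1$ is the uncovered vertex of $G_1\setminus c_1$ and $r_2$ is the uncovered vertex of $G_2\setminus c_2$, i.e.\ exactly when $\neg P_1\wedge P_2$ holds.

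The main obstacle is engineering the bridging gadget so that it simultaneously (i) admits a perfect matching in all four configurations, (ii) forces uniqueness of that matching, and (iii) couples $\neg P_1$ with $P_2$ rather than coding only one of them. Uniqueness is the delicate point, because any internal choice left open inside the gadget would yield a second perfect matching of $G$. The fix is to choose the bridging path short and of the correct parity, so that once one endpoint is pinned by the uncovered vertex on its side, every subsequent edge along the path is forced.

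The final clause of the lemma follows by composing this construction with the negation gadget from \Cref{lem.codingtruenegation}. Applying that gadget to $G_1$ and then invoking this lemma produces a coder for $\neg(\neg P_1)\wedge P_2\equiv P_1\wedge P_2$. Applying this lemma to $G_1$ together with the negation of $G_2$ and then negating the result produces a coder for $\neg(\neg P_1\wedge\neg P_2)\equiv P_1\vee P_2$. Finally, $P_1\to P_2\equiv \neg(P_1\wedge\neg P_2)$ is obtained by combining the AND construction just described with one further application of negation.
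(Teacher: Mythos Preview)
Your plan has a structural gap: the ``series'' architecture you describe---separate chains $l \leadsto l_1$ and $r_2 \leadsto r$, together with a bridging path from $r_1$ to $l_2$---cannot be made to work, no matter how the lengths and parities are chosen. Write the bridge as $r_1\text{--}w_1\text{--}\cdots\text{--}w_k\text{--}l_2$. After deleting $c_i$, the forced matching on $G_i\setminus c_i$ leaves exactly one of $l_i,r_i$ exposed, as you correctly observe; in your layout the bridge must then perfectly match $\{w_1,\dots,w_k\}$ together with whichever of its endpoints $r_1,l_2$ are exposed. In the case $(P_1,P_2)=(\text{true},\text{true})$ only $r_1$ is exposed at the bridge, forcing $k$ odd; in the case $(P_1,P_2)=(\text{false},\text{true})$ neither bridge endpoint is exposed, forcing $k$ even. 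The end chains cannot help, since in your description they never touch $r_1$ or $l_2$. Hence no ``carefully chosen parity'' makes all four truth assignments admit a perfect matching, let alone a unique one coding the right value.

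The paper's construction avoids precisely this obstruction by abandoning the series layout. It removes $c_1,c_2$, adds a single new vertex $rr$, and inserts the cycle $l_1\text{--}l_2\text{--}r_1\text{--}r_2\text{--}rr\text{--}r\text{--}c\text{--}l\text{--}l_1$. The crucial difference from your proposal is the presence of the cross edges $\{l_1,l_2\}$ and $\{r_1,r_2\}$ in addition to $\{l_2,r_1\}$: these let the exposed vertex on the $G_1$ side pair directly with the exposed vertex on the $G_2$ side in every one of the four cases, after which the remaining gadget vertices are forced. Your final paragraph, deriving coders for $P_1\wedge P_2$, $P_1\vee P_2$, and $P_1\to P_2$ from the $\neg P_1\wedge P_2$ gadget together with \Cref{lem.codingtruenegation}, is correct and matches the paper.

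A smaller issue: your uniqueness argument (``any other matching of $G_i\setminus c_i$ could be extended by the appropriate $c_i$-edge'') only rules out near-perfect matchings of $G_i\setminus c_i$ whose missing vertex is $l_i$ or $r_i$; it does not by itself exclude a perfect matching of $\inside{G_i}$, which would let both $l_i$ and $r_i$ match into the gadget and could spoil uniqueness of the assembled $G$. The paper's sketch is also silent here, but with its explicit gadget one can verify case by case (or carry a slightly stronger inductive invariant through the recursion) that this situation does not arise.
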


\begin{proof}
  Given the coding graphs $\la G_i, l_i, r_i, c_i \ra$ coding $P_i$, $i \in \{1, 2\}$, define a coding graph $\la G, l, r, c \ra$ that codes $\neg P_1 \wedge P_2$ by removing the vertices $c_i$, then adding a new vertex $rr$, and including the cycle $l_1$-$l_2$-$r_1$-$r_2$-$rr$-$r$-$c$-$l$-$l_1$ (\Cref{figure.codingand.01}).

By assumption, the component graphs $G_1$ and $G_2$ have unique perfect matchings.
It is straightforward to show that these extend to a unique perfect matching of $G$. 
There are four cases, depending on the truth value coded by the $G_i$.

For example, if $G_1$ and $G_2$ both code true, then $l_1$ matches into $G_1$ and $l_2$ matches into $G_2$.  
Then the matching must include $\{l, c\}$, $\{r, rr\}$, and $\{r_2, r_1\}$ leading to a unique perfect matching where $G$ codes false, matching the desired truth value of $\neg T\land T\equiv F$. 
The proofs of the other three cases are similar. 
\end{proof}

  %
  %
  \begin{figure}[htbp]
    \begin{center}
      \begin{tikzpicture}

        \draw[dashed,rounded corners=3mm] (-1.75,-1.45) rectangle (3.75,1.45);  

        \node[fill=white,shape=rectangle,draw,inner xsep=4mm,inner ysep=2mm,dashed,rounded corners=3mm] (G1) at (0,.85) {$\inside{G}_1$}; 
        \node[fill=white,shape=circle,draw] (L1) at (-1,0) {$l_1$};
        \node[fill=white,shape=circle,draw] (R1) at (1,0) {$r_1$};
        \node[fill=white,shape=rectangle,draw,inner xsep=4mm,inner ysep=2mm,dashed,rounded corners=3mm] (G2) at (1,-.85) {$\inside{G}_2$};
        \node[fill=white,shape=circle,draw] (L2) at (0,0) {$l_2$};
        \node[fill=white,shape=circle,draw] (R2) at (2,0) {$r_2$};	
        \node[fill=white,shape=circle,draw] (L) at (-2.5,-1.6) {$l$};
        \node[fill=white,shape=circle,draw] (R) at (4.5, -1.6) {$r$};
        \node[fill=white,shape=circle,draw] (C) at (1,-2) {$c$}; 
        \node[fill=white,shape=circle,draw] (RR) at ( 3,0) {$rr$};

        \draw (G1)--(L1) -- (L2);
        \draw (R1)--(G1);
        \draw (G2)--(L2)--(R1) -- (R2)--(G2);
        \draw (L1) -- (L) -- (C) -- (R) -- (RR)--(R2);
      \end{tikzpicture}
    \caption{Coding $\neg P_1 \land P_2$, where $P_i$ is coded by  $\la G_i, l_i, r_i, c_i\ra$}    \label{figure.codingand.01}
    \end{center}

  \end{figure}

\begin{lemma}\label{lem.codingsets}
Let $\la G_i  \ra$ be a sequence of disjoint coding graphs, each with a unique matching.  Then $\bigsqcup G_i$ can be seen as coding membership in the set $\{i: G_i$ codes true$\}$.
\end{lemma}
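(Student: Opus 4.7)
The proof is essentially a verification that disjointness lets the local coding data sum up correctly. I would proceed in three short steps.

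First, I would observe that because the graphs $G_i$ are pairwise disjoint in $\bigsqcup_i G_i$, any matching $M$ of the union decomposes uniquely as $M = \bigsqcup_i M_i$, where each $M_i = M \cap E(G_i)$ is a matching of $G_i$, and conversely any sequence of matchings $\la M_i\ra$ of the components patches together to a matching of the union. Moreover, $M$ is perfect in $\bigsqcup_i G_i$ iff each $M_i$ is perfect in $G_i$. Since each $G_i$ has by hypothesis a unique perfect matching, it follows in $\RCA$ that the union admits a (unique) perfect matching, namely the effective disjoint union of the individual unique perfect matchings.

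Second, I would recall the definition of coding true/false: the coding graph $\la G_i, l_i, r_i, c_i\ra$ has the property that $l_i$ has a unique neighbor $x^l_i$ in $\inside{G_i}$ and $r_i$ has a unique neighbor $x^r_i$ in $\inside{G_i}$, so the unique perfect matching $M_i$ contains exactly one of the two edges $\{l_i, x^l_i\}$ or $\{r_i, x^r_i\}$, with the first case meaning $G_i$ codes true and the second meaning $G_i$ codes false. Hence given $M$ and the sequence $\la G_i\ra$, the predicate ``$\{l_i, x^l_i\} \in M$'' is $\Delta^0_1$ uniformly in $i$, $M$, and the sequence parameter.

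Third, I would conclude by $\Delta^0_1$-comprehension in $\RCA$ that the set $S = \{ i : G_i \text{ codes true} \} = \{ i : \{l_i, x^l_i\} \in M\}$ exists and is computable from $M$ together with the sequence $\la G_i\ra$. This is the sense in which $\bigsqcup_i G_i$ codes membership in $S$: the unique perfect matching of the disjoint union encodes, in a uniformly decodable way, exactly the characteristic function of $S$.

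There is no real obstacle here; the only thing worth being careful about is making sure that ``coding membership'' is understood uniformly in the sequence parameter, so that the lemma is usable inside the later effective transfinite recursion. In particular one should note that $l_i, r_i, c_i$, and the distinguished neighbors $x^l_i, x^r_i$ are given uniformly from the code of $G_i$, so the decoding map $M \mapsto S$ is a single $\Delta^0_1$ procedure applied column-wise, as required for the applications to limit stages in the construction of the graphs $G^X_e$.
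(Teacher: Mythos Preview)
Your proof is correct and follows essentially the same approach as the paper's: both argue that disjointness makes perfect matchings of the union correspond to tuples of perfect matchings of the components, so the unique perfect matching of $\bigsqcup_i G_i$ simultaneously records the truth values coded by each $G_i$. Your version is more explicit about the $\Delta^0_1$ decoding and the uniformity needed for the later transfinite recursion, whereas the paper's proof is a terse two-sentence remark; the added care does no harm and is arguably an improvement.
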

\begin{proof}
Because the graphs are disjoint, 
if all of the given graphs satisfy condition~(A) and have perfect matchings, then the union will satisfy condition~(A) and have a perfect matching.  
Furthermore, any perfect matching of the whole graph will be unique, and code the truth of all the component propositions.
\end{proof}

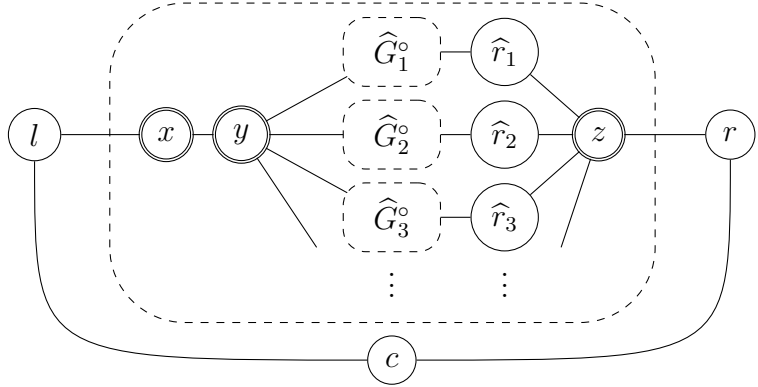
\begin{figure}[htbp]
  \begin{center}
    \begin{tikzpicture}
      \draw[dashed,rounded corners=10mm] (-3.75,-2.5) rectangle (3.5,1.75);
			\node (G1) at (0,1.1) {};
      \node[fill=white,shape=circle,draw] (r1) at (1.5,1.1) {$\hat r_1$};
			\node (G2) at (0,0) {};
      \node[fill=white,shape=circle,draw] (r2) at (1.5,0) {$\hat r_2$};
			\node (G3) at (0,-1.1) {};
      \node[fill=white,shape=circle,draw] (r3) at (1.5,-1.1) {$\hat r_3$};
      \node at (0,-1.9) {$\vdots$};
      \node at (1.5,-1.9) {$\vdots$};
      \node[fill=white,shape=circle,draw] (l) at (-4.75,0) {$l$};
      \node[fill=white,shape=circle,draw] (r) at ( 4.5,0) {$r$};
      \node[fill=white,shape=circle,,draw] (c) at (0,-3) {$c$};
      \node[fill=white,shape=circle,draw,double] (x) at (-3,0) {$x$};
      \node[fill=white,shape=circle,draw,double] (y) at (-2,0) {$y$};
      \node[fill=white,shape=circle,draw,double] (z) at (2.75,0) {$z$};
      \draw (y) -- (G1) -- (r1) -- (z) -- (r2) -- (G2) -- (y) -- (G3) -- (r3) -- (z);
			\draw (l)  .. controls (-4.75,-3) .. (c) .. controls (4.5,-3) .. (r);
      \draw (l) -- (x)--(y);
      \draw (z) -- (r);
			\draw (y) --++ ( 1  ,-1.5);
			\draw (z) --++ (-0.5,-1.5);
      \node[fill=white,shape=rectangle,draw,inner xsep=4mm,inner ysep=2mm,dashed,rounded corners=3mm] at (G1) {$\inside{\hat G}_1$};
      \node[fill=white,shape=rectangle,draw,inner xsep=4mm,inner ysep=2mm,dashed,rounded corners=3mm] at (G2) {$\inside{\hat G_2}$};
      \node[fill=white,shape=rectangle,draw,inner xsep=4mm,inner ysep=2mm,dashed,rounded corners=3mm] at (G3) {$\inside{\hat G_3}$};
    \end{tikzpicture}
  \caption{Coding $(\exists i) P(i)$ in the case where $P(i)$ is true for at most one $i$.}  \label{figure.existsgadget}
  \end{center}

\end{figure}

\begin{lemma}\label{lem.codingexists}
  There is a single computable procedure which, given a uniformly computable sequence of coding graphs $\langle G_n , l_n, r_n, c_n\rangle$, each with a unique matching and each coding a predicate $P(n)$, returns a coding graph with a unique matching that codes the truth of the sentence $(\exists n) P(n)$.  
\end{lemma}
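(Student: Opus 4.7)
Plan: I will reduce the existential to the ``at most one predicate is true'' case handled by \Cref{figure.existsgadget}, and then verify the gadget by a forced-move chase. Define $Q(n) := P(n) \land \bigwedge_{m<n} \neg P(m)$. Then $(\exists n)P(n) \iff (\exists n)Q(n)$, and at most one $Q(n)$ can hold, namely for the least $n$ with $P(n)$ true (if any). Applying \Cref{lem.codingtruenegation} and \Cref{lem.codingand} uniformly in $n$ (using the remark after \Cref{lem.codingand} to get plain conjunctions) yields a computable sequence of coding graphs $\la G'_n, l'_n, r'_n, c'_n \ra$, each with a unique perfect matching coding $Q(n)$.

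Now assemble the gadget of \Cref{figure.existsgadget}: delete every center $c'_n$; introduce the shared interior vertices $x, y, z$, the individual vertices $\hat r_n$, and the new coding-graph vertices $l, r, c$; and wire them by $l$-$x$, $x$-$y$, $z$-$r$, $l$-$c$, $c$-$r$, together with $y$-$l^\star_n$, $\hat r_n$-$r^\star_n$, $\hat r_n$-$z$ for each $n$, where $l^\star_n, r^\star_n$ are the unique interior neighbors of $l'_n, r'_n$ in $G'_n$.

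The key structural invariant, verified by induction over the constructions in \Cref{lem.codingtruenegation} and \Cref{lem.codingand}, is that $|\inside{G'_n}|$ is odd: the ``true'' graph has interior size $3$; negation adds the two old vertices $l, r$ to the interior; and the binary gadget for $\neg P_1 \land P_2$ adds $rr$ together with the four old boundary vertices, i.e.\ an odd count. Consequently, neither $\inside{G'_n}$ nor $\inside{G'_n} \setminus \{l^\star_n, r^\star_n\}$ admits a perfect matching, so in any perfect matching of the new graph each $\inside{G'_n}$ must use \emph{exactly one} of the edges $y$-$l^\star_n$ or $\hat r_n$-$r^\star_n$. From the analysis of the unique matching of $G'_n$ after deleting $c'_n$, the first option is realizable iff $Q(n)$ is true, and the second iff $Q(n)$ is false.

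A forced-move case analysis then produces the unique perfect matching. If no $Q(n)$ holds, every $\hat r_n$-$r^\star_n$ is forced, so $y$-$x$, $l$-$c$, and $z$-$r$ follow, giving a unique matching that codes false. If the necessarily unique $Q(k)$ holds, then $y$-$l^\star_k$ is forced, then $\hat r_k$-$z$, $l$-$x$, and $c$-$r$ follow, with $\hat r_n$-$r^\star_n$ for all $n \ne k$, giving a unique matching that codes true. Condition~(A) is then immediate from \Cref{prop-perfect-to-A}. The main obstacle is pinning down the parity invariant and its consequence that the two degenerate local configurations (no external edge at $\inside{G'_n}$, or both $y$-$l^\star_n$ and $\hat r_n$-$r^\star_n$ used at once) are impossible; once this is secured, every subsequent move is forced and uniqueness is automatic, and a parallel argument shows the interior of the assembled $\exists$-graph itself admits no perfect matching once $x$ and $z$ are removed, so the invariant propagates and the gadget can be iterated inside larger constructions (such as the coding of $H_e^X$ for successor ordinals).
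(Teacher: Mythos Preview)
Your proof takes essentially the same route as the paper's: reduce to the ``at most one true'' case via $Q(n)=P(n)\land\bigwedge_{m<n}\neg P(m)$, assemble the gadget of \Cref{figure.existsgadget} with auxiliary vertices $x,y,z$, and run the two-case forced-move analysis. One descriptive slip: you say you delete only the centers $c'_n$, but you must also delete $l'_n$ (and either delete $r'_n$ or identify your fresh $\hat r_n$ with the retained $r'_n$, which is what the paper does); otherwise $l'_n$ becomes a pendant forcing $\{l'_n,l^\star_n\}$ into every perfect matching and wrecking your parity count. Your explicit odd-parity invariant on $|\inside{G'_n}|$, and the resulting ``exactly one boundary edge is used'' conclusion, is a crisper justification of uniqueness than the paper offers---there the corresponding step is asserted only by informal appeal to the unique matchings of the component coding graphs.
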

\begin{proof}
  For technical reasons, we must begin by using negation and conjunction to obtain a new sequence of coding graphs $\hat G_i$ which code the predicates $\hat P(i) = P(i) \land \neg P(i-1) \land \dots \land \neg P(1)$. 
  Then at most one of the $\hat G_i$ codes true (and if there is exactly one, it will be the $\hat G_i$ with $i$ least possible).  Note also that $(\exists n) P(n)\equiv (\exists n) \hat P(n)$.

We can now create a new coding graph $\la G, l, r, c \ra$ that will code $(\exists n) \hat P(n)$.  We modify and combine the coding graphs $\hat G_i$ as in \Cref{figure.existsgadget}:  For each $i$, the vertices $\hat l_i$ and $\hat c_i$ are removed.
  The vertices of the graph being constructed consist of the remaining vertices of the coding graphs $\inside{\hat G_i}$ together with new vertices: $x, y, z$.
  for each $i$, the unique vertex in $\inside{\hat G_i}$ that was adjacent to $\hat l_i$ is set adjacent to the single vertex $y$, and all the vertices $\hat r_i$ are set adjacent to the separate vertex $z$.  Finally, we include the path $y$-$x$-$l$-$c$-$r$-$z$.

We must show that there exists a unique matching of this graph, and that it codes $(\exists n)\hat P(n)$.  Since at most one of the predicates $\hat P(i)$ is true, we have two cases. 

First, suppose there exists exactly one $k$ such that $\hat P(k)$ is true.  Then from the unique perfect matching of $\hat G_k$, we get that $y$ must be matched into $\inside{\hat G_k}$, which means $\hat r_k$ must be matched to $z$ and all other $\hat r_i$ must be matched into their respective $\inside{\hat G_i}$.  Furthermore, $\{r, c\}$ and $\{l,x\}$ must be in the matching, giving a unique perfect matching of $G$.

Second, suppose that all $\hat P(i)$ are false.
Then from the unique perfect matchings of each $\hat G_k$, each $\hat r_i$ must be matched  into its respective $\inside{\hat G_i}$.  This means $\{z, r\}$, $\{c, l\}$, and $\{x, y\}$ must be in the matching, again giving a unique perfect matching of $G$.
\end{proof}

  Suppose we have coding graphs $G_0, G_1,G_2, \dots$ such that $G_i$ codes $i \in X$, and that we wish to code $\exists n \, [{\varphi^{X}_{e,n}(e)\halts}]$.
  Unfortunately, our procedure does not have access to the set $X$ itself.  Because $X$ is only \emph{coded}, it can only be computed  from perfect \emph{matchings} of the $G_i$.

\begin{lemma}\label{lem.codingjumps}
  There is a uniformly computable procedure which,    
  given any sequence of graphs $\langle G_i\rangle$ coding the predicates $ i \in X $ and given any $e \in \N$, returns a single graph that codes the predicate $e \in X'$.     
\end{lemma}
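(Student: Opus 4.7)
The plan is to express ``$e \in X'$'' as a single existentially-quantified finite conjunction of atomic predicates of the form ``$i \in X$'' or ``$i \notin X$'', and then to realize that sentence as a coding graph by iterated application of \Cref{lem.codingtruenegation}, \Cref{lem.codingand}, and \Cref{lem.codingexists}.  By definition of the Turing jump, $e \in X'$ if and only if $\exists n\,[\varphi^{X}_{e,n}(e)\halts]$, where $\varphi^{X}_{e,n}$ is the simulation of the $e$-th oracle machine on input $e$ for $n$ steps.  Any such halting computation queries the oracle at only finitely many positions, so we may encode it as a \emph{halting scenario} $\sigma = \la (k_1,a_1),\ldots,(k_s,a_s) \ra$ consisting of the query positions $k_j$ and the hypothetical answers $a_j \in \{0,1\}$ that drive the machine to halt on input $e$ (when supplied with those answers).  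Crucially, the set of all halting scenarios is uniformly computable in $e$ alone---no oracle is required---and $e \in X'$ if and only if some halting scenario is \emph{consistent with $X$}, meaning ${k_j \in X} \Leftrightarrow {a_j = 1}$ for every $j$.

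First I would fix a computable enumeration $\la \sigma_t \ra_{t \in \N}$ of all halting scenarios for the $e$-th machine.  For each $t$, writing $\sigma_t = \la (k_1,a_1),\ldots,(k_s,a_s) \ra$, I would construct a coding graph $H_t$ that codes the finite conjunction $\bigwedge_{j \leq s} (k_j \in X \leftrightarrow a_j = 1)$ by iterating \Cref{lem.codingand} on the graphs $G_{k_j}$, using \Cref{lem.codingtruenegation} to negate $G_{k_j}$ whenever $a_j = 0$.  Each $H_t$ inherits a unique perfect matching from its components and codes true exactly when $\sigma_t$ is consistent with $X$.  Finally, I would apply \Cref{lem.codingexists} to the uniformly computable sequence $\la H_t \ra_{t \in \N}$ to produce a single coding graph that codes $\exists t\,[\sigma_t \text{ is consistent with } X]$, which is equivalent to $e \in X'$.

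The main conceptual obstacle is the one already flagged in the setup of the lemma: the procedure must not consult $X$ directly, as it is given only the coding graphs $G_i$.  The plan above sidesteps this because the enumeration of halting scenarios is purely syntactic---depending only on the index $e$, not on $X$---while every genuine check ``is $k_j$ in $X$?'' is deferred to a copy of $G_{k_j}$ via conjunction and negation.  What remains to verify is that the composite graph is uniformly computable in $\la G_i \ra$ and $e$, which follows from the uniformity of the three prior lemmas together with the uniform computability of the scenario enumeration, and that the resulting graph has a unique perfect matching whose coded truth value equals that of $e \in X'$, which is immediate from the correctness clauses of \Cref{lem.codingtruenegation}, \Cref{lem.codingand}, and \Cref{lem.codingexists}.
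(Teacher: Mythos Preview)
Your proposal is correct and follows essentially the same strategy as the paper: express $e \in X'$ as an existential quantifier over a computable index set whose instances are finite Boolean combinations of the predicates ``$i \in X$'', then apply \Cref{lem.codingtruenegation}, \Cref{lem.codingand}, and \Cref{lem.codingexists} in turn. The only difference is bookkeeping: the paper parameterizes the outer existential by step count $n$ and, for each $n$, codes the conjunction $\bigwedge_{\sigma \in 2^n}\bigl[(\sigma \prec X) \to \varphi_{f(e,\sigma),n}(e)\halts\bigr]$, whereas you parameterize directly by halting scenarios; one small point to tidy up is that your enumeration of halting scenarios may be finite or empty, so you should pad the sequence $\la H_t \ra$ with coding graphs for ``false'' before invoking \Cref{lem.codingexists}.
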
 
\begin{proof} 
  As usual, we adopt the convention that ${\varphi^{X}_{e,n}(e)\halts}$ if and only if  ${\varphi^{X \upto n}_{e,n}(e)\halts}$.  Furthermore, we will use the characteristic function of $X$ in the place of $X$ and  denote an arbitrary initial segment of $X$ by $\sigma$, a finite string of $0$'s and $1$'s.
  Recall that there is a uniformly computable function $f$ such that for each $e,n \in \N$ and each $\sigma \in 2^{n}$, $\varphi_e^{\sigma}(e) = \varphi_{f(e, \sigma)}(e)$.  Thus the statements  ${\varphi_{f(e,\sigma),n}(e)\halts}$ clearly have uniformly given coding graphs.

  Suppose we wished to perform a computation relative to ${X \upto n}$.
  Without knowing the perfect matchings of the $G_i$, each string $\sigma\in 2^{n}$ is a possible initial segment of $X$.
  To code ${\varphi^X_{e,n}(e)\halts}$ without any knowledge of $X$, we code the following statement, which accounts for every possible initial segment $\sigma$ of $X$.
  \[\bigwedge_{\sigma \in 2^n} \left[
      \left(\bigwedge_{\sigma(j) = 1} (j\in X) \ \land \bigwedge_{\sigma(j) = 0} \neg (j\in X)
      \right) \to {\varphi_{f(e, \sigma), n}(e)\halts}\right]\]

  Note that for each $\sigma\not\prec X$, its corresponding antecedent will be false, so that particular conjunction will be true.
  For the unique $\sigma\prec X$ of length $n$, its corresponding antecedent will be true, and the consequent will determine the truth value of the whole conjunction.  
  To determine if ${\varphi_e^X(e)\halts}$, simply code $\exists n \, [{\varphi_{e,n}^X(e)\halts}]$ as usual.
\end{proof}

Putting everything together, we obtain the following.

\begin{theorem} \label{codeonehypset}
Let $X \subseteq \N$.  For every $e \in \O^X$, there exists an $X$-computable graph $G_e^X$ such that $H_e^X$ is computable in any perfect matching of $G_e^X$. 
\end{theorem}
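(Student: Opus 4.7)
The plan is to build, by effective transfinite recursion on notations $e\in\O^X$, a uniformly $X$-computable sequence of coding graphs $\la G_n^{e,X} : n\in\N\ra$, each with a unique perfect matching, such that $G_n^{e,X}$ codes the truth value of ``$n\in H_e^X$.'' We then set $G_e^X := \bigsqcup_n G_n^{e,X}$ and apply \Cref{lem.codingsets}, so that the unique perfect matching of $G_e^X$ is a disjoint union of the unique matchings of its pieces, from which $H_e^X$ is recoverable by inspecting which side ($l$ or $r$) each interior is matched to.

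The recursion follows the standard cases in the definition of the $H^X$-hierarchy. For the base case ($e=1$, so $H_1^X = X$), for each $n\in\N$ we ask the oracle $X$ whether $n\in X$; if yes, we emit the ``true'' coding graph from \Cref{lem.codingtruenegation}(1), otherwise its negation from \Cref{lem.codingtruenegation}(2). For the successor case $e = 2^d$, we have inductively produced a sequence $\la G_n^{d,X}\ra$ coding membership in $H_d^X$; for each $m\in\N$ we apply \Cref{lem.codingjumps} to this sequence with input $m$ to obtain a coding graph for ``$m\in (H_d^X)' = H_e^X$.'' For the limit case $e = 3\cdot 5^d$, where $H_e^X = \bigoplus_k H_{\varphi_d^X(k)}^X$, we split each $n = \la k,m\ra$ and set $G_n^{e,X} := G_m^{\varphi_d^X(k),X}$, which has already been built by the inductive hypothesis since $\varphi_d^X(k) <_\O e$.

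Because each of the building blocks (\Cref{lem.codingtruenegation,lem.codingand,lem.codingsets,lem.codingexists,lem.codingjumps}) is given by a single computable procedure, the scheme above specifies, uniformly in $X$, $e$, and $n$, how to build $G_n^{e,X}$ from the indices of graphs at smaller notations. Applying effective transfinite recursion along $\O^X$ (a standard consequence of the recursion theorem, as in Ash--Knight), we obtain a single $X$-computable function $(e,n)\mapsto G_n^{e,X}$, and hence an $X$-computable graph $G_e^X = \bigsqcup_n G_n^{e,X}$. Since the disjoint union of graphs each with a unique perfect matching has a unique perfect matching, any perfect matching $M$ of $G_e^X$ is that unique matching, and one reads off $n\in H_e^X$ by checking in $M$ whether the interior of $G_n^{e,X}$ is matched to its $l$-vertex or its $r$-vertex. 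The main obstacle, beyond bookkeeping, is invoking effective transfinite recursion cleanly: one has to describe the recursion by a total index so that the recursion theorem supplies a single uniform program, rather than separate programs per ordinal stage; this is where the uniformity clauses in the preceding lemmas become essential.
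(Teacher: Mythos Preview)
Your proposal is correct and follows essentially the same approach as the paper: both argue by effective transfinite recursion along $\O^X$, handling the base case via \Cref{lem.codingtruenegation}, successors via \Cref{lem.codingjumps}, and limits by re-indexing to form the effective join, then take $G_e^X$ to be the disjoint union of the per-bit coding graphs. Your write-up is somewhat more explicit about the bookkeeping (the pairing at limit stages and the invocation of the recursion theorem), but the underlying argument is the same.
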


\begin{proof}
Lemmas~\ref{lem.codingtruenegation} and \ref{lem.codingsets} provide a uniform procedure for coding computable sets by sequences of graphs.  
For successor ordinals, \Cref{lem.codingjumps} shows that given a sequence of graphs coding a set $X$, we can uniformly find a sequence of graphs coding $X'$.  
For limit ordinals, given any computable simultaneous sequences of graphs $\langle G_{i,n} \rangle$ coding the sets $X_n$, we can uniformly produce a sequence of graphs coding the effective join $\bigoplus X_n = \{\la n, i \ra : i \in X_n\}$.  Here we can simply take $\langle G_{\la i, n \ra} \ra$.
By effective transfinite recursion, a consequence of the recursion theorem, there is therefore a uniformly computable sequence such that for every $e \in \O^X$, there is a graph $G_e^X$, which is computable in $X$, such that $H_e^X$ is computable in any perfect matching of $G_e^X$. 
\end{proof}

Implicit in the above construction is the fact that for each $e \in \O^X$, $G_e^X$ satisfies condition~(A).  Because we have given a transfinite recursive construction that preserves condition~(A) at each successor and limit sage, it follows by  transfinite induction that all resulting graphs satisfy condition~(A).

Because condition~(A) is  $\Pi^1_2$, the transfinite induction used above is  $\Pi^1_2$-transfinite induction.  
The system, known as $\Pi^1_2$-$\mathsf{TI}_0$, is slightly weaker than $\Sigma^1_2$-$\mathsf{DC}_0$, which is equivalent to $\Delta^1_2$-$\mathsf{CA}_0$ plus $\Sigma^1_2$-$\mathsf{IND}$ by Theorem~VII.6.9 of \cite{simpson}.
Thankfully, the level of induction can be reduced using the fact that these coding graphs do not have any infinite paths.

\begin{lemma}\label{codinggraphfinitepath} \label{codingsatA}
The following hold over $\Pi^1_1$-$\mathsf{TI}_0$.
\begin{enumerate}
\item For any $e$, if $e \in \O^X$, then the coding graph $G_{e}^X$ has no infinite paths.
\item  For any $e$, if $e \in \O^X$, then the coding graph $G_{e}^X$ satisfies condition~(A).
\end{enumerate}
\end{lemma}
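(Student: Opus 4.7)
The plan is to prove (1) and (2) simultaneously by $\Pi^1_1$-transfinite induction along the $X$-computable well-order $\O^X$, following the recursive structure of the construction of $G_e^X$ given in the proof of \Cref{codeonehypset}.

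\textbf{Complexity of the induction predicate.} Let $\Phi(e)$ abbreviate the conjunction of (1) and (2) for the given $e$ and $X$. The statement ``$G_e^X$ has no infinite path'' is $\Pi^1_1$: no injection $\N\to V(G_e^X)$ traces an edge-sequence. Under the hypothesis that $G_e^X$ has no infinite path, every augmenting path is automatically finite, so the inner existential quantifier in condition~(A) is witnessed by a finite tuple and hence is $\Sigma^0_1$; consequently condition~(A) is $\Pi^1_1$ in this setting. Thus $\Phi(e)$ is $\Pi^1_1$, so $\PTI$ supports induction on it.

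\textbf{Verification at each recursive step.} I would check $\Phi$ at the base case (the finite ``true'' graph from \Cref{lem.codingtruenegation}) and at each of the gadget constructions used in \Cref{lem.codingtruenegation,lem.codingand,lem.codingsets,lem.codingexists,lem.codingjumps}. At the base case and at the negation and conjunction gadgets, only finitely many new vertices and edges are adjoined, so any purported infinite path must eventually remain inside a previously built coding graph, contradicting the inductive hypothesis; condition~(A) follows from the existence (already proved in those lemmas) of a unique perfect matching together with \Cref{prop-perfect-to-A}. For the disjoint union of \Cref{lem.codingsets} and the limit stage of the recursion, paths are confined to a single component, so both properties transfer coordinate-wise from the inductive hypothesis. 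The existential gadget (\Cref{lem.codingexists}) is the subtle case: the components $\hat G_i$ are glued through the shared vertices $y$ and $z$ (together with the short $x$-$l$-$c$-$r$ attachment), and because paths are injective every hop between two components must pass through $y$ or $z$, so any path can traverse at most two components; combined with the inductive hypothesis that each $\hat G_i$ has no infinite path, this yields that $G_e^X$ has no infinite path. The jump gadget (\Cref{lem.codingjumps}) is built entirely from the negation, conjunction, and existential gadgets, so no new verification is needed beyond those.

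\textbf{Main obstacle.} The main technical obstacle I anticipate is the verification of condition~(A) at the existential gadget, where new gluing vertices must admit augmenting paths starting from them \emph{for every matching}, not merely the canonical perfect matching. I plan to handle this by a local case analysis: given a matching $M$ of $G_e^X$ and an unmatched $s$, either $s$ lies inside some $\inside{\hat G_i}$, in which case the inductive hypothesis applied to $\hat G_i$ together with a short extension through the gluing vertices yields the required augmenting path, or $s$ is one of the newly added vertices $\{l,r,c,x,y,z,\hat r_i\}$, in which case the few possible alternating walks along the very restricted adjacencies pictured in \Cref{figure.existsgadget} can be enumerated directly to produce an augmenting path. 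Once these local checks are in place, the $\Pi^1_1$-transfinite induction pushes $\Phi(e)$ through every $e \in \O^X$, giving both clauses of the lemma.
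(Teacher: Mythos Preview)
Your proposal is essentially the paper's argument, just organized as a single simultaneous induction rather than two sequential ones: the paper first runs $\Pi^1_1$-transfinite induction to get (1), and then observes that once (1) is in hand the condition-(A) predicate collapses to $\Pi^1_1$, so a second $\Pi^1_1$-transfinite induction gives (2). Your packaging of $\Phi(e)$ as the conjunction, with the observation that ``no infinite path $\wedge$ condition~(A)'' is equivalent to ``no infinite path $\wedge$ finite-path condition~(A)'' and hence $\Pi^1_1$, is exactly the same reduction.

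One point to tighten: at the negation and conjunction gadgets you invoke ``the existence (already proved in those lemmas) of a unique perfect matching'' to obtain condition~(A) via \Cref{prop-perfect-to-A}, but those lemmas assume the \emph{inputs} already have unique perfect matchings, which is not part of your induction hypothesis $\Phi$ (and is $\Sigma^1_1$, so cannot simply be added). You should instead carry out the direct local case analysis---the method you correctly identify as the ``main obstacle'' and propose for the existential gadget---uniformly at every gadget. Also, in the existential step your bound ``at most two components'' is slightly off (removing $y$ and $z$ can split the path into up to three segments), though your underlying reason---that each of $y,z$ is visited at most once---is exactly right and yields finiteness.
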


\begin{proof}
The proof of (1) is a straightforward transfinite induction argument.  
Using (1), the transfinite induction used in (2) becomes $\Pi^1_1$. 
\end{proof}

This gives us the following strengthened result.
\begin{corollary} \label{reversetoATR}
 $\FPST + \Pi^1_1$-$\mathsf{TI}_0$ implies $\ATR$ over $\RCA$.
\end{corollary}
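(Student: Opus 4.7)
The plan is to reduce to the Aharoni--Magidor--Shore characterization of $\ATR$ stated in the excerpt: namely, $\ATR$ is equivalent over $\RCA$ to the statement that $H^X_e$ exists for every set $X$ and every $e \in \O^X$. So, working in $\RCA + \Pi^1_1$-$\mathsf{TI}_0 + \FPST$, I would fix $X$ and $e \in \O^X$ and aim to produce $H^X_e$.

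First I would invoke the uniformly computable coding machinery of \Cref{codeonehypset} to obtain the $X$-computable graph $G^X_e$. The construction itself is arithmetical (it only describes the graph via the recursion theorem and effective transfinite recursion on $\O^X$), so $G^X_e$ exists already in $\RCA$. Next I would apply \Cref{codingsatA}, which is exactly where the additional axiom $\Pi^1_1$-$\mathsf{TI}_0$ is used: its part~(1) gives by $\Pi^1_1$-transfinite induction that $G^X_e$ contains no infinite paths (the statement ``has no infinite path'' being $\Pi^1_1$), and part~(2), leveraging part~(1), then shows that $G^X_e$ satisfies condition~(A). With these two facts in hand, $G^X_e$ is a graph of the form to which $\FPST$ applies.

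Now I would apply $\FPST$ to $G^X_e$ to extract a perfect matching $M$. By the coding property from \Cref{codeonehypset}, $H^X_e$ is computable in $M$, so by $\Delta^0_1$-comprehension in $\RCA$ the set $H^X_e$ exists. Since $X$ and $e \in \O^X$ were arbitrary, this establishes the jump-hierarchy characterization of $\ATR$, as desired.

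The only genuinely delicate step is the verification that condition~(A) and ``no infinite path'' propagate through the effective transfinite recursion defining $G^X_e$, and that the induction needed to see this truly lives at the $\Pi^1_1$ level once one knows the graphs have no infinite paths; but this is handled by \Cref{codingsatA} and requires no new ideas here. The rest of the argument is the routine assembly of already-established pieces, so no substantial obstacle remains beyond citing the right lemmas in the correct order.
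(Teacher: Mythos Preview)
Your proposal is correct and follows essentially the same route as the paper: fix $X$ and $e \in \O^X$, build the coding graph $G^X_e$ via \Cref{codeonehypset}, use \Cref{codingsatA} together with $\Pi^1_1$-$\mathsf{TI}_0$ to verify that $G^X_e$ has no infinite paths and satisfies condition~(A), apply $\FPST$ to obtain a perfect matching, and decode $H^X_e$ from it. The only cosmetic difference is that the paper first notes $\FPST$ (via the locally finite reversal) gives $\ACA$, so one may freely work over $\ACA$; you omit this remark, but nothing in your argument actually requires it.
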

\begin{proof}  This is a direct consequence of \Cref{codeonehypset}, together with the above lemma.  As $\ST$ already implies $\ACA$ over $\RCA$, everything can be done over $\ACA$.  Fix a set $X$ and an $e \in \mathbb{N}$.  Assume $e \in \O^X$.  Apply \Cref{codeonehypset} to obtain the coding graph $G_e^X$.  By \Cref{codingsatA}, together with $\Pi^1_1$-$\mathsf{TI}_0$, we have that $G_e^X$ satisfies condition~(A).  Then $\FPST$ provides a perfect matching $M$ of $G_e^X$, from which we can compute $H_e^X$, again by \Cref{codeonehypset}.
\end{proof}

Although the results above do not give a complete reversal from $\ST$ to $\ATR$, they will enable us to separate $\FPST$ from $\SAC$ and $\SDC$.
By Theorem~VIII.5.12 of \cite{simpson}, $\Pi^1_1$-$\mathsf{TI}_0$ is equivalent to $\SDC$ over $\ACA$.  

\begin{corollary}
  \label{corollary.steffens-not-consequence-of-Sigma11AC}
  Neither $\ST$ nor $\FPST$ are implied by $\SAC$, $\SDC$, or $\CPM$.  
  In particular, $\ST$ is strictly stronger than $\SAC$.
\end{corollary}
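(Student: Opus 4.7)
The plan is to leverage \Cref{reversetoATR} together with the known relative strength of $\SDC$. Since $\SDC$ implies $\SAC$ over $\RCA$, and $\CPM$ is equivalent to $\SAC$ by \Cref{disjointunion}, it suffices to show that $\SDC$ does not imply $\FPST$; because $\ST$ implies $\FPST$ (as $\FPST$ is just a restriction of $\ST$), this will also yield that $\SDC \not\vdash \ST$, and both non-implications then follow \emph{a fortiori} for $\SAC$ and $\CPM$.

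First I would recall Theorem~VIII.5.12 of \cite{simpson}, which yields that $\SDC$ is equivalent to $\PTI$ over $\ACA$. Suppose for contradiction that $\SDC \vdash \FPST$. Since $\SDC$ implies $\ACA$, this would give $\SDC \vdash \FPST + \PTI$, and hence by \Cref{reversetoATR}, we would have $\SDC \vdash \ATR$. However, it is standard (see \cite{simpson}) that $\ATR$ is strictly stronger than $\SDC$, giving the desired contradiction. Therefore $\SDC \not\vdash \FPST$, and so also $\SDC \not\vdash \ST$. Propagating this down the hierarchy gives the same non-implications for $\SAC$ and, via \Cref{disjointunion}, for $\CPM$.

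For the ``in particular'' assertion, \Cref{StefToS11AC} yields $\ST \vdash \SAC$, while the previous paragraph shows $\SAC \not\vdash \ST$, so $\ST$ is strictly stronger than $\SAC$. The main obstacle, if it deserves the name, is purely a matter of correctly importing the relevant facts from \cite{simpson}: namely the equivalence of $\SDC$ with $\PTI$ over $\ACA$, and the fact that this system sits strictly below $\ATR$. Once those two inputs are quoted, the entire corollary is a one-step consequence of \Cref{reversetoATR}.
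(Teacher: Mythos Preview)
Your proof is correct and follows essentially the same route as the paper: both hinge on \Cref{reversetoATR} together with the fact that $\SDC$ does not prove $\ATR$. The only cosmetic difference is that the paper phrases this model-theoretically, explicitly exhibiting $\HYP$ as an $\omega$-model of $\SDC$ (hence of $\SAC$ and $\CPM$) that fails $\ATR$ and therefore fails $\FPST$, whereas you run the same implication chain as a proof-theoretic contradiction.
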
 
\begin{proof}
We first show that $\HYP$, the $\omega$-model consisting of the hyperarithmetical sets, is not an $\omega$-model of  $\FPST$ (or $\ST$). 
By \Cref{reversetoATR}, $\FPST$ plus $\SDC$ implies $\ATR$ (and similarly for $\ST$), so every model of $\FPST$ plus $\SDC$ is a model of $\ATR$.
But by Proposition~V.2.6 in \cite{simpson}, $\HYP$ is not a model of $\ATR$. 
 
On the other hand, by Corollary~VIII.4.17 of \cite{simpson}, $\HYP$ is an $\omega$-model of $\SAC$ and an $\omega$-model of $\SDC$.  Hence, it is an $\omega$-model of $\SAC + \SDC$.

It follows that $\FPST$ cannot be a consequence of $\SAC$ (and similarly for $\ST$).
By \Cref{disjointunion}, $\CPM$ is equivalent to $\SAC$, so neither $\ST$ nor $\FPST$ follow from $\CPM$.
\end{proof}

\begin{question}
Does $\FPST$ imply $\ATR$ over $\RCA$?  This would be true if there is a reversal from $\FPST$ to $\SDC$ over $\SAC$.
Does $\ATR$ prove either $\FPST$ or $\ST$?
\end{question}

Our results show that $\FPST$ is quite close in strength to $\ATR$.

\begin{conjecture}
$\FPST$ is equivalent to $\ATR$ over $\RCA$.
\end{conjecture}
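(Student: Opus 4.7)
The plan is to establish the conjectured equivalence in two parts. The forward direction, $\ATR \vdash \FPST$, would complete the answer to the open question at the end of the section. The reverse direction, $\FPST \vdash \ATR$ over $\RCA$, would strengthen \Cref{reversetoATR} by eliminating the reliance on $\Pi^1_1\text{-}\mathsf{TI}_0$; by the existing reversal together with the equivalence of $\SDC$ and $\Pi^1_1\text{-}\mathsf{TI}_0$ over $\ACA$ (used in the proof of \Cref{corollary.steffens-not-consequence-of-Sigma11AC}), it suffices for this direction to prove $\FPST \vdash \SDC$ over $\SAC$, as suggested by the authors' question.

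For the forward direction, I would exploit the fact that when $G$ has no infinite paths, the predicate ``$M$ is an independent matching of $G$'' is arithmetic (augmenting paths are necessarily finite) and condition~(A) is $\Pi^1_1$. Working in $\ATR$, one constructs the matching by transfinite recursion along a well-founded rank function on $G$: assign to each vertex $v$ the supremum of lengths of paths starting at $v$, so absence of infinite paths gives a $G$-computable well-partial-order. Rather than applying $\MIM$ through a $\beta$-model (as in \Cref{steffensnoinf}), the plan is to build a maximal independent matching by arithmetic transfinite recursion: at each rank level, use condition~(A) to obtain a finite augmenting path (finite by the rank bound), and combine these via \Cref{lemma.indepremoveindep}. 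The Steffens iteration of \Cref{theorem.steffens-in-Pi12-CA+} is then performed along the ordinal bound supplied by $\ATR$. A key subclaim is that $\MIM$ for graphs without infinite paths is itself provable in $\ATR$; this should follow because the only use of $\beta$-model reflection in \Cref{steffensnoinf} is to confirm maximality of $M^*$, a $\Sigma^1_1$ property once independence is arithmetic, and such reflection for $\Sigma^1_1$ properties is available through $\ATR$'s hyperarithmetic $\omega$-models.

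For the reverse direction, I would extend the coding machinery of Lemmas~\ref{lem.codingtruenegation}--\ref{lem.codingjumps} to code entire $\SDC$-sequences rather than individual $H$-sets. Given a $\Sigma^1_1$ formula $\theta(n, X, Y)$ for which $\SDC$ demands a sequence $\langle X_n \rangle$ with $\theta(n, X_n, X_{n+1})$, the strategy is to build a single disjoint-union graph whose $n$th component codes the $n$th stage of the hypothetical sequence, with cross-component coding gadgets ensuring consistency. Condition~(A) for the combined graph would be verified \emph{by explicit construction of augmenting paths} using the recursive structure of the coding, so the induction required is $\Sigma^0_1$ rather than $\Pi^1_1$; this is the key technical move that avoids the use of $\Pi^1_1\text{-}\mathsf{TI}_0$ in \Cref{reversetoATR}.

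The main obstacle I anticipate is the forward direction, specifically formalizing $\MIM$ for finite-path graphs in $\ATR$. Steffens' construction of $M^*$ requires ensuring that \emph{no} independent matching properly extends it, and even when the relevant complexity drops to $\Sigma^1_1$, verifying this globally from a hyperarithmetic enumeration of candidate matchings may resist direct arithmetic transfinite recursion. If this step cannot be made to work in $\ATR$, a fallback is to try to show that the coding in the reverse direction can be strengthened to encode sequences of hyperarithmetic sets directly into $\FPST$, which combined with \Cref{reversetoATR} would still yield $\FPST \equiv \ATR$ through the $\SDC$ route; the obstacle there shifts to maintaining condition~(A) throughout a uniform sequence of coding graphs without appealing to $\Pi^1_1$ transfinite induction.
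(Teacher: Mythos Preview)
This statement is a \emph{conjecture} in the paper, not a theorem: the authors explicitly leave it open and provide no proof. So there is nothing in the paper to compare your proposal against; the question is whether your plan could plausibly close the gap. I see genuine problems on both sides.

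\textbf{Forward direction.} Your rank function is not well-defined as stated: the supremum of lengths of paths starting at $v$ need not be a natural number (a vertex can start paths of every finite length without starting any infinite path), so it is certainly not ``$G$-computable.'' Even treated as an ordinal rank, you would first need $\ATR$ to produce it, and there is no a priori bound on its height in terms of data you already control. More seriously, your claim that ``reflection for $\Sigma^1_1$ properties is available through $\ATR$'s hyperarithmetic $\omega$-models'' is incorrect: the existence, for every $X$, of a countable coded $\omega$-model that is $\Sigma^1_1$-absolute (a $\beta$-model) is precisely equivalent to $\PCA$, not $\ATR$. The $\omega$-models that $\ATR$ supplies (e.g., models of $\SAC$) do not reflect $\Sigma^1_1$ sentences, so the argument of \Cref{steffensnoinf} does not transfer. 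This is exactly why the paper only gets $\PCA$ as the upper bound for $\MIM$ on finite-path graphs and leaves the $\ATR$ bound open.

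\textbf{Reverse direction.} The assertion that condition~(A) for the combined coding graph can be verified ``by explicit construction of augmenting paths'' so that ``the induction required is $\Sigma^0_1$'' is the entire content of the open problem, not a step you can take for granted. The paper's proof of \Cref{codingsatA} already proceeds by explicitly exhibiting augmenting paths at each stage; the reason $\Pi^1_1$-$\mathsf{TI}_0$ is invoked is that the property being carried along the well-order --- ``$G_e^X$ satisfies condition~(A)'' --- is $\Pi^1_1$ in the finite-path setting, and transfinite induction for a $\Pi^1_1$ predicate is $\Pi^1_1$-$\mathsf{TI}_0$. To lower this to $\Sigma^0_1$ induction you would need to replace condition~(A) by a uniformly arithmetical (indeed $\Sigma^0_1$) surrogate that is preserved through the recursion; nothing in your proposal indicates what that surrogate would be. Coding $\SDC$ sequences instead of $H$-sets does not change the complexity of the inductive hypothesis.
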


\subsection{Weihrauch reductions} \label{sect.weihrauch}

Although it is not the main focus of our paper, the reversal in previous section is perhaps best stated in a special case of the language of Weihrauch reducibility.  For a detailed introduction and background, see \cite{Weihrauch-survey}.

In the language of Weihrauch reducibility, mathematical problems are represented by multivalued partial functions, which are written as $f:{\subseteq A} \rightrightarrows B$, where $A, B$ contain of mathematical objects, represented as subsets of $\N$ via the standard codings. For each instance $X\in \dom(f)$, $f(X)$ is the set of all solutions to this instance of the problem.  A \emph{realizer} for the problem $f:{\subseteq A} \rightrightarrows B$ is a function  $F:{\subseteq A} \rightarrow B$ that assigns exactly one solution to each instance of the problem (for each $X\in A$, $F(X)$ is an $F(X) \in f(X)$).  

In this language, $\ST$ corresponds to the partial multivalued function $f:{\subseteq\mathsf{Graphs}} \rightrightarrows \mathsf{Matchings}$ whose domain is the set of graphs satisfying condition~(A) and so that for each such graph $X$, $f(X)$ is the set of all perfect matchings of $X$.   A realizer $F$ of this $f$ is any function that assigns a \emph{specific} perfect matching to each $X \in \dom(f)$.  To compare $\ST$ with $\ATR$, it suffices to use a special case of the Weihrauch Reducibility defined in \cite{Weihrauch-survey,kihara}.

\begin{define}
Let $f, g$ be multivalued functions representing problems in countable mathematics.  Then $f$ is Weihrauch reducible to $g$ if there are computable functions $K, H:{\subseteq \N^{\N}} \rightarrow \N^{\N}$ such that for any $G$, if $G$ is a realizer for $g$, then the function defined by $F(p) = K\langle p, G \circ H(p) \rangle$ is a realizer for $f$.
\end{define}

In \cite{kihara}, Kihara, Marcone and Pauly give a definition for the Weihrauch principle $\mathsf{ATR}$, and show that it is strongly Weihrauch equivalent to the principle ``Unique Closed Choice on Baire Space,'' written $\mathsf{UC}_{\N^{\N}}$.  And while there is no single Weihrauch principle which can be considered the Weihrauch analogue of $\ATR$, the principle $\mathsf{UC}_{\N^{\N}}$,  and therefore $\mathsf{ATR}$, is Weihrauch equivalent to many principles which are equivalent to $\ATR$ in reverse mathematics.

In particular, we will use a result of Goh \cite{goh}, that $\mathsf{ATR}$ is Weihrauch equivalent to the problem whose instances are pairs $(\mathcal{L}, A)$ such that $\mathcal{L} = (L, 0_L, S, p)$ is a labeled well-ordering and $A \subseteq \N$, and such that for each instance $(\mathcal{L}, A)$, the unique solution is the jump hierarchy $\la X_a \ra_{a \in L}$ starting with $A$.

\begin{corollary} \label{weihrauchATR}
$\mathsf{ATR}$ is Weihrauch reducible to $ \FPST$.
\end{corollary}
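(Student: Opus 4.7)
The plan is to combine Goh's Weihrauch characterization of $\mathsf{ATR}$ with the uniform coding procedure already established in Theorem~\ref{codeonehypset}. By Goh's result cited immediately above, it suffices to exhibit a Weihrauch reduction from the jump-hierarchy problem, whose instances are pairs $(\mathcal{L}, A)$ with $\mathcal{L} = (L, 0_L, S, p)$ a labeled well-ordering and $A \subseteq \N$, and whose unique solution is $\la X_a \ra_{a \in L}$, to the problem $\FPST$ whose instances are graphs without infinite paths that satisfy condition~(A), and whose solutions are perfect matchings.

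For the forward map $H$, I would proceed as follows. Given an instance $(\mathcal{L}, A)$, the labeling of $\mathcal{L}$ provides, uniformly in $a \in L$, an $A$-computable notation $e_a \in \O^A$ such that $H_{e_a}^A = X_a$. Applying Theorem~\ref{codeonehypset} uniformly to each $e_a$, I would form the sequence $\la G_{e_a}^A : a \in L \ra$ of coding graphs and set $H(\mathcal{L}, A) := \bigsqcup_{a \in L} G_{e_a}^A$. This disjoint union is uniformly computable from $(\mathcal{L}, A)$. By Lemma~\ref{codinggraphfinitepath}, each $G_{e_a}^A$ has no infinite paths and satisfies condition~(A); both properties are preserved under disjoint unions, so $H(\mathcal{L}, A)$ is a legitimate instance of $\FPST$.

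For the backward map $K$, suppose $M$ is any perfect matching of $H(\mathcal{L}, A)$. Its restriction $M_a$ to the component $G_{e_a}^A$ is a perfect matching of that component, and by the uniformity asserted in Theorem~\ref{codeonehypset}, a single computable procedure takes $(e_a, M_a)$ to the characteristic function of $H_{e_a}^A = X_a$. Assembling these outputs uniformly yields the hierarchy $\la X_a \ra_{a \in L}$, which is the unique solution to the given jump-hierarchy instance. Setting $K(\la (\mathcal{L}, A), M \ra)$ equal to this sequence completes the reduction.

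The only subtlety to check is that the uniformities of Theorem~\ref{codeonehypset}, namely the uniform production of the graphs $G_e^X$ and the uniform decoding of $H_e^X$ from any perfect matching, are genuinely witnessed by fixed Turing programs, so that $H$ and $K$ themselves are computable rather than merely pointwise computable; this is precisely the content of the effective transfinite recursion used in the proof of Theorem~\ref{codeonehypset}.
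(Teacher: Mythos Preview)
Your overall strategy---invoke Goh's characterization, feed the instance into the coding-graph machinery, and decode the hierarchy from a matching---is the right one, and it matches the paper's. However, the route you take through Kleene's $\O$ has a genuine gap.

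You assert that ``the labeling of $\mathcal{L}$ provides, uniformly in $a \in L$, an $A$-computable notation $e_a \in \O^A$ such that $H_{e_a}^A = X_a$.'' This fails whenever $\mathcal{L}$ is not $A$-computable: if the order type of $L$ exceeds $\omega_1^{A}$ then there are no notations in $\O^A$ for the relevant ordinals at all, so no such $e_a$ exist. Repairing the oracle to $X=\mathcal{L}\oplus A$ gives notations $e_a\in\O^{X}$, but still not the equation $H_{e_a}^{X}=X_a$: the $H$-hierarchy starts at $\mathcal{L}\oplus A$ rather than at $A$, and at limit stages the two hierarchies take joins indexed differently (one by a fundamental sequence from $\O^{X}$, the other by all $b<_L a$). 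Extracting $\la X_a\ra_{a\in L}$ uniformly from the $H_{e_a}^{X}$ is possible, but it requires its own effective transfinite recursion that you have not supplied; invoking Theorem~\ref{codeonehypset} as a black box does not deliver it.

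The paper avoids this translation layer entirely. Rather than passing through $\O$, it reruns the construction underlying Theorem~\ref{codeonehypset} by $(\mathcal{L}\oplus A)$-effective transfinite recursion \emph{directly along $L$}: the base stage builds a coding graph for $A$, successor stages apply Lemma~\ref{lem.codingjumps}, and limit stages apply Lemma~\ref{lem.codingsets} with the join indexed by $L$ itself. The resulting graph $G$ has perfect matchings that code $\la X_a\ra_{a\in L}$ on the nose. That $G$ has no infinite paths and satisfies condition~(A) is then verified by a transfinite induction along $L$ analogous to (but not literally an instance of) Lemma~\ref{codinggraphfinitepath}. This is both shorter and avoids the oracle mismatch your approach runs into.
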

\begin{proof}
Consider any pair $(\mathcal{L}, A)$, where $\mathcal{L} = (L, 0_L, S, p)$ is a labeled well-ordering and $A \subseteq \N$.  Perform a similar construction used to prove \Cref{codeonehypset}, using $(\mathcal{L} \oplus A)$-transfinite recursion on $L$ to obtain a graph $G$ such that any perfect matching of $G$ codes the jump hierarchy on $L$ which starts with $A$.  We can use transfinite induction to show that $G$ has no infinite paths, and that $G$ satisfies condition~(A).  By $\FPST$, $G$ has a perfect matching $M$, which codes the jump hierarchy on $L$ starting with $A$, as desired.
\end{proof}

\appendix

\bibliographystyle{plain}
\bibliography{matchings-rm}

\end{document}